\newtheorem{mainthm}{Theorem}
\newtheorem{theorem}{Theorem}[section]
\newtheorem*{theorem*}{Theorem}
\newtheorem{corollary}[theorem]{Corollary}
\newtheorem{lemma}[theorem]{Lemma}
\newtheorem{proposition}[theorem]{Proposition}
\newtheorem*{proposition*}{Proposition}
\newtheorem*{conjecture*}{Conjecture}
\theoremstyle{definition}
\newtheorem{remark}[theorem]{Remark}
\numberwithin{equation}{section}
\def\bC {\mathbb{C}}
\def\bN {\mathbb{N}}
\def\bR {\mathbb{R}}
\def\cE {\mathcal{E}}
\def\cS {\mathcal{S}}
\def\cY {\mathcal{Y}}
\def\scrL{\mathscr{L}}
\def\grad {{\nabla}}
\def\la {\langle}
\def\ra {\rangle}
\newcommand{\tx}[1]{\mathrm{#1}}
\newcommand{\wto}{\rightharpoonup}
\newcommand{\wt}[1]{\widetilde{#1}}
\newcommand{\conj}[1]{\overline{#1}}
\newcommand{\sign}{\operatorname{sign}}
\newcommand{\spn}{\operatorname{span}}
\newcommand{\dist}{\operatorname{dist}}
\renewcommand{\ker}{\operatorname{ker}}
\newcommand{\eee}{\mathrm e}
\newcommand{\ud}{\mathrm{\,d}}
\newcommand{\vd}{\mathrm{d}}
\newcommand{\vD}{\mathrm{D}}
\newcommand{\dd}[1]{{\frac{\vd}{\vd{#1}}}}
\newcommand{\lin}{_{\textsc{l}}}
\title{Construction of two-bubble solutions \\ for the energy-critical NLS}
\author{Jacek Jendrej}
\address{Department of Mathematics, University of Chicago, 5734 S. University Avenue, Chicago, IL 60637}
\email{jacek@math.uchicago.edu}
\begin{document}

\begin{abstract}
We construct pure two-bubbles for the energy-critical focusing nonlinear Schr\"odinger equation
in space dimension $N \geq 7$. The constructed solution is global in (at least) one time direction
and approaches a superposition of two stationary states both centered at the origin,
with the ratio of their length scales converging to $0$.
One of the bubbles develops at scale $1$, whereas the length scale of the other
converges to $0$ at rate $|t|^{-\frac{2}{N-6}}$.
The phases of the two bubbles form the right angle.
\end{abstract}

\maketitle
\section{Introduction}
\label{sec:intro}
\subsection{Setting of the problem}
\label{ssec:setting}
We consider the Schr\"odinger equation with the focusing energy-critical power nonlinearity:
\begin{equation}
\label{eq:nls}
i\partial_t u(t, x) + \Delta u(t, x) + f(u(t, x)) = 0, \qquad f(u) := |u|^\frac{4}{N-2}u, \qquad t \in \bR, x \in \bR^N.
\end{equation}
This equation can be studied in space dimension $N \geq 3$, but here we will restrict our attention to the case $N \geq 7$.

The \emph{energy functional} associated with this equation is defined for $u_0 \in \dot H^1(\bR^N; \bC)$ by the formula
\begin{equation*}
  E(u_0) := \int_{\bR^N} \frac 12|\grad u_0(x)|^2 - F(u_0(x))\ud x,
\end{equation*}
where $F(u_0) := \frac{N-2}{2N} |u_0|^\frac{2N}{N-2}$. Note that $E(u_0)$ is well-defined due to the Sobolev Embedding Theorem.
The differential of $E$ is $\vD E(u_0) = -\Delta u_0 - f(u_0)$, hence we have the following Hamiltonian form of the equation \eqref{eq:nls}:
\begin{equation}
  \label{eq:nlsH}
      \partial_t u(t) = -i \vD E(u(t)).
\end{equation}

  Equation \eqref{eq:nls} is locally well-posed in the space $\dot H^1(\bR^N)$, as was proved by Cazenave and Weissler \cite{CaWe90},
  see also a complete review of the Cauchy theory in \cite{KeMe06} (for $N \in \{3, 4, 5\}$) and \cite{KiVi10} (for $N \geq 6$).
  By ``well-posed'' we mean that for any initial data $u_0 \in \dot H^1(\bR^N)$ there exists $\tau > 0$
  and a linear subspace $S \subset C([t_0 - \tau, t_0 + \tau]; \dot H^1(\bR^N))$ such that there exists a unique weak solution $u(t) \in S$ of \eqref{eq:nls}
  satisfying $u(t_0) = u_0$, and that this solution is continuous with respect to the inital data.
  By standard arguments, there exists a maximal time of existence $(T_-, T_+)$, $-\infty \leq T_- < t_0 < T_+ \leq +\infty$,
  and a unique solution $u \in C((T_-, T_+); \dot H^1(\bR^N))$.
  Moreover, if $u_0 \in X^1 := \dot H^2(\bR^N) \cap \dot H^1(\bR^N)$, then $u \in C((T_-, T_+); X^1)$.
  If $T_+ < +\infty$, then $u(t)$ leaves every compact subset of $\dot H^1(\bR^N)$ as $t$ approaches $T_+$.
  A crucial property of the solutions of \eqref{eq:nls} is that the energy $E$ is a conservation law.
  If $u_0 \in L^2$, then the mass $\|u(t)\|_{L^2}^2$ is another conservation law, but we will never use this fact.

  In this paper, we always assume that the initial data are radially symmetric. This symmetry is preserved by the flow. We denote $\cE$ the space radially symmetric functions in $\dot H^1(\bR^N; \bC)$.

  For a function $v \in \cE$, we denote
  \begin{equation*}
    v_\lambda(x) := \frac{1}{\lambda^\frac{N-2}{2N}} v\big(\frac{x}{\lambda}\big).
  \end{equation*}
  A change of variables shows that
  \begin{equation*}
    E\big((u_0)_\lambda\big) = E(u_0).
  \end{equation*}
  Equation~\eqref{eq:nls} is invariant under the same scaling: if $u(t)$ is a solution of \eqref{eq:nls} and $\lambda > 0$, then
  $
  t \mapsto u\big(t_0 + \lambda^{-2}t\big)_\lambda
  $ is also a solution
  with initial data $(u_0)_\lambda$ at time $t = 0$.
  This is why equation~\eqref{eq:nls} is called \emph{energy-critical}.

  The solutions of the corresponding \emph{defocusing} equation exist globally and scatter. This was proved by Bourgain \cite{Bourgain99} and Tao \cite{Tao05}
for radial solutions, and by Colliander, Keel, Staffilani, Takaoka and Tao \cite{Iteam08}, Ryckman and Visan \cite{RyVi07}, and Visan \cite{Visan07}
for non-radial data.

  The study of the dynamical behavior of solutions of the focusing equation \eqref{eq:nls} for large initial data was initiated by Kenig and Merle \cite{KeMe06}.
  In this case, an important role is played by the family of stationary solutions $u(t) \equiv \eee^{i\theta} W_\lambda$, where
  \begin{equation*}
    W(x) = \Big(1 + \frac{|x|^2}{N(N-2)}\Big)^{-\frac{N-2}{2}}.
  \end{equation*}
  The functions $\eee^{i\theta}W_\lambda$ are called \emph{ground states} or \emph{bubbles} (of energy). They are the only radially symmetric solutions
  of the critical elliptic problem
  \begin{equation}
    \label{eq:elliptic}
    -\Delta u - f(u) = 0.
  \end{equation}
  The ground states achieve the optimal constant in the critical Sobolev inequality, which was proved by Aubin \cite{Aubin76} and Talenti \cite{Talenti76}.
  They are the ``mountain passes'' for the potential energy.

  Kenig and Merle \cite{KeMe06} exhibited the special role of the ground states $\eee^{i\theta}W_\lambda$ as the \emph{threshold elements} for nonlinear dynamics of the solutions of \eqref{eq:nls}
  in space dimensions $N = 3, 4, 5$ for radial data. They proved the so-called \emph{Threshold Conjecture} by completely classifying the dynamical behavior of solutions $u(t)$ of \eqref{eq:nls}
  such that $E(u(t)) < E(W)$. An analogous result in higher dimensions, for non-radial data, was obtained by Killip and Visan \cite{KiVi10}.

  A much stronger statement about the dynamics of solutions is the \emph{Soliton Resolution Conjecture}, which predicts that a bounded (in an appropriate sense) solution
  decomposes asymptotically into a sum of energy bubbles at different scales and a radiation term (a solution of the linear Schr\"odinger equation).
  This was proved for the radial energy-critical wave equation in dimension $N = 3$
  by Duyckaerts, Kenig and Merle \cite{DKM4}, see also \cite{DJKM} for the non-radial case. For \eqref{eq:nls} this problem is completely open.

  Solutions slightly above the ground state energy threshold were studied by Ortoleva and Perelman~\cite{OrPe13} in dimension $N = 3$,
  see also Perelman~\cite{Perelman14} for the closely related critical equivariant Schr\"odinger maps equations with values in the sphere.
  They constructed global solutions which stay close to $\eee^{i\theta}W_\lambda$ in the energy space, with $\lambda$ converging to $0$ as time $t$ goes to $+\infty$.
  These solutions decompose into a concentrating bubble and a radiation term, in accordance with the Soliton Resolution Conjecture.
  The works of Ortoleva and Perelman follow the approach developed by Krieger, Schlag and Tataru \cite{KrScTa08, KrScTa09} for wave equations.
  For the Schr\"odinger maps, following a different approach, Merle, Rodnianski and Rapha\"el \cite{MeRaRo13} obtained blow-up solutions
  which are stable relative to a set of finite codimension in some space which contains the bubble.

  On the classification side, it is unknown whether the Soliton Resolution Conjecture holds even with an additional assumption that the solution
  remains close to the family of the ground states. In the mass-critical case and for a solution blowing up in finite time,
  this was proved by Merle and Rapha\"el \cite{MeRa05, MeRa05-2}, see also Fan \cite{Chenjie16p}.
\subsection{Main results}
  In view of the Soliton Resolution Conjecture, solutions which exhibit no dispersion in one or both time directions play a distinguished role.
  One obvious example of such solutions are the static solutions $\eee^{i\theta}W_\lambda$.
  In this paper, we consider the simplest non-trivial case, namely
  we construct global radial solutions which approach, in the energy space,
  a sum of two bubbles. The ratio of the scales at which these bubbles develop tends to $0$.

\begin{mainthm}
  \label{thm:deux-bulles}
There exists a solution $u: (-\infty, T_0] \to \cE$ of \eqref{eq:nls} such that
  \begin{equation}
    \label{eq:mainthm}
    \lim_{t\to -\infty}\Big\|u(t) - \Big({-}iW + W_{\frac{1}{\kappa}(\kappa|t|)^{-\frac{2}{N-6}}}\Big)\Big\|_\cE = 0,
  \end{equation}
  where $\kappa$ is an explicit constant.
\end{mainthm}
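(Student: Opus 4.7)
The plan is to adapt to the Schr\"odinger setting the two-bubble construction strategy developed for the energy-critical wave equation. The scheme has four ingredients: a formal modulation analysis that fixes both the law of the scale parameter and the phase relation, the construction of high-order approximate solutions, a backward-in-time bootstrap involving a finite-dimensional modulation together with a shooting argument for one unstable direction, and a compactness argument passing from approximate solutions with data prescribed at a sequence $T_n \to -\infty$ to the actual solution.

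For the formal analysis, insert the ansatz $\varphi(t) = \eee^{\iii\theta_1(t)}W + \eee^{\iii\theta_2(t)}W_{\lambda(t)}$ into \eqref{eq:nls} and compute the residual $R(\varphi) = \iii\partial_t\varphi - \vD E(\varphi)$ in the regime $\lambda \ll 1$. The leading interaction comes from the cross term in $f(W + \eee^{\iii(\theta_2-\theta_1)}W_\lambda)$, which near the scale-$1$ bubble is essentially $\eee^{\iii(\theta_2-\theta_1)}f'(W)W_\lambda$. Testing the Hamiltonian equation against the symplectically dual generators of scaling and phase at each bubble (essentially $\Lambda W_\lambda$ and $\iii W_\lambda$) produces modulation ODEs whose leading form is
\begin{equation*}
\dot\lambda \;\sim\; c\,\lambda^{(N-4)/2}\sin(\theta_1 - \theta_2), \qquad \dot\theta_j \;\sim\; 0.
\end{equation*}
Requiring the profile $\lambda(t) = \kappa^{-1}(\kappa|t|)^{-2/(N-6)}$ to solve this ODE fixes both the constant $\kappa$ (from $c$) and the phase difference $\theta_1 - \theta_2 = -\pi/2$, which accounts for the right-angle condition in \eqref{eq:mainthm} and the stated rate; the restriction $N \geq 7$ ensures the exponent $2/(N-6)$ is positive, so $\lambda(t) \to 0$ as $t \to -\infty$. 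With the modulation law fixed, I would then construct approximate solutions $\varphi_{\tx{app}}(t)$ of arbitrarily high order. Starting from the leading profile $-\iii W + W_{\lambda(t)}$, successive linear stationary problems $\scrL u_k = S_k$, where $\scrL$ is the linearization of $\vD E$ at $W$, are solved to cancel out higher-order interactions, producing corrections localized at scales $1$ and $\lambda$. After finitely many iterations, the remaining residual is $O(\lambda^M)$ in $\cE$ for $M$ as large as desired.

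The core step is a backward bootstrap. Fix $T_n \ll 0$ and let $u_n$ be the solution of \eqref{eq:nls} with initial data at $T_n$ equal to $\varphi_{\tx{app}}(T_n)$ perturbed in one scalar direction. Decompose $u_n(t) = \wt\varphi(t) + g(t)$, with $\wt\varphi$ of the same form as $\varphi_{\tx{app}}$ but with dynamical modulation parameters $(\lambda,\theta_1,\theta_2)$ enforcing orthogonality of $g$ against the scaling and phase directions of each bubble. The heart of the argument is the construction of a mixed energy/virial functional $\cH(g,\lambda)$ whose time derivative is controlled by the size of $R(\varphi_{\tx{app}})$ plus higher-order terms in $g$, and which dominates $\|g\|_\cE^2$ modulo the single unstable mode of $\scrL$. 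This unstable mode, arising from the negative eigenvalue of $\scrL$ restricted to the real part, is the principal obstacle: I would handle it by choosing the one free parameter in the initial data at $T_n$ via a topological shooting (Brouwer) argument so that the unstable projection of $g$ vanishes throughout $[T_n,T_0]$. The bootstrap then delivers a bound $\|u_n(t) - \varphi_{\tx{app}}(t)\|_\cE \lesssim \lambda(t)^{M-\delta}$ on $[T_n,T_0]$ with $T_0$ independent of $n$.

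To conclude, extract a weakly convergent subsequence of $\{u_n(T_0)\}$ in $\cE$; standard perturbation theory for \eqref{eq:nls} upgrades this to locally uniform convergence in time of $u_n$ to a solution $u$ on $(-\infty, T_0]$, which inherits the bootstrap estimate and hence satisfies \eqref{eq:mainthm}. Beyond the shooting for the unstable direction, the main technical hurdle will be establishing coercivity of $\cH$ uniformly as $\lambda \to 0$: the orthogonality conditions defining the decomposition must themselves be scale-dependent, and in dimension $N \geq 7$ the nonlinearity $f$ is H\"older- but not Lipschitz-continuous, so the nonlinear estimates on $g$ will likely rely on Strichartz bounds rather than purely on energy-space arguments.
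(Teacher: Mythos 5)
Your overall architecture (modulation, backward bootstrap from $T_n\to-\infty$, Brouwer shooting, weak-limit compactness) is in the right family, and your formal derivation of the law $\lambda(t)\sim\kappa^{-1}(\kappa|t|)^{-2/(N-6)}$ and of the right-angle phase condition matches the paper. But there is a concrete gap in how you count and handle the instabilities. You propose to remove ``the single unstable mode of $\scrL$, arising from the negative eigenvalue of $\scrL$ restricted to the real part'' by a one-parameter shooting. For the Schr\"odinger linearization the exponential instability is not that of $L^+$ alone: it comes from the non-self-adjoint system, i.e.\ from the pair $\cY^{(1)},\cY^{(2)}$ with $L^+\cY^{(1)}=-\nu\cY^{(2)}$, $L^-\cY^{(2)}=\nu\cY^{(1)}$, giving eigenvalues $\pm\nu/\lambda^2$ of the adjoint linearized flow at scale $\lambda$. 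In a two-bubble configuration there is one such exponentially unstable direction \emph{per bubble} (the projections $a_1^+$ and $a_2^+$ in the paper), and in addition the scale parameter $\lambda$ itself is unstable relative to the target trajectory (the linearized ODE for $\lambda-\lambda_{\mathrm{app}}$ does not self-correct). This is why the paper's topological argument is three-dimensional, shooting on $(\lambda^0,a_1^0,a_2^0)$ over the cube $Q=[-\tfrac12,\tfrac12]^3$. A single free scalar parameter, as in your plan, cannot kill three outgoing directions, so your bootstrap cannot close as written.

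Beyond that, note that your route is genuinely different from the paper's, and its remaining steps are nontrivial promissory notes. The paper does \emph{not} build high-order approximate solutions: it works with the bare superposition $\eee^{i\zeta}W_\mu+\eee^{i\theta}W_\lambda+g$, controls $\|g\|_\cE$ by conservation of energy plus coercivity of $E$ near a two-bubble (Proposition~\ref{prop:coercivity}), and the delicate point is the phase $\theta$ of the small bubble, whose modulation equation contains a quadratic-in-$g$ term $K/\lambda^2$ of critical size; this is absorbed by a localized virial correction $\theta-\tfrac{1}{2\|W\|_{L^2}^2}\la g,iA_0(\lambda)g\ra$, not by improving the ansatz. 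If you insist on an $O(\lambda^M)$ corrector hierarchy you must address solvability/orthogonality conditions and the slow polynomial rates (e.g.\ $\dot\lambda\sim\lambda^{(N-4)/2}$), which is precisely the machinery the paper was designed to avoid; and no Strichartz estimates are needed in the bootstrap, only in the final compactness step, where the correct statement is weak continuity of the flow ($u_n(t)\wto u(t)$ via profile decomposition, Corollary~\ref{cor:weak-cont}), not an upgrade to locally uniform convergence by ``standard perturbation theory.''
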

\begin{remark}
  For the value of $\kappa$, see \eqref{eq:kappa}.
\end{remark}
\begin{remark}
  More precisely, we will prove that
  \begin{equation*}
    \Big\|u(t) - \Big({-}iW + W_{\frac{1}{\kappa}(\kappa|t|)^{-\frac{2}{N-6}}}\Big)\Big\|_\cE \leq C_1|t|^{-\frac{1}{2(N-6)}}
  \end{equation*}
  for some constant $C_1 > 0$.
\end{remark}
\begin{remark}
We construct here \emph{pure} two-bubbles, that is the solution approaches a superposition of two stationary states, with no energy transformed into radiation.
By the conservation of energy and the decoupling of the two bubbles, we necessarily have $E(u(t)) = 2E(W)$.
Pure one-bubbles cannot concentrate and are completely classified, see \cite{DM09}.
\end{remark}
\begin{remark}
  For energy-critical wave equations, similar objects were constructed in \cite{moi16p}.
\end{remark}
\begin{remark}
  \label{rem:dim7}
  In dimension $N = 6$ one can expect an analogous result, with an exponential concentration rate.
\end{remark}
\begin{remark}
  In higher dimension, fast dispersion or dissipation sometimes excludes the possibility of a concentration of a bubble
  of energy for solutions which belong to a small neighborhood of a bubble.
  This was proved in \cite{CoMeRa16p} in the case of the critical heat equation,
  see also \cite{Perelman16sem} for the Schr\"odinger equation.
  We prove here that once we leave a small neighborhood of a bubble,
  concentration of a bubble of energy is possible in arbitrarily high dimension.
\end{remark}
\begin{remark}
  I expect that the phases of the two bubbles forming the right angle is the only configuration
  in which a two-bubble can form. 
\end{remark}

\subsection{Outline of the proof}
The overall structure is similar as in the earlier work of the author on the critical wave equations \cite{moi16p}.
We build a sequence $u_n: [T_n, T_0] \to \cE$ of solutions of \eqref{eq:nls}
with $T_n \to -\infty$ and $u_n(t)$ close to a two-bubble solution for $t \in [T_n, T_0]$.
Taking a weak limit finishes the proof. This type of argument goes back to the works of Merle~\cite{Merle90} and Martel~\cite{Martel05}.
The heart of the analysis is to obtain uniform energy bounds for the sequence $u_n$.
To do this, we use a new, simplified approach. It can be resumed as follows.

We study solutions of \eqref{eq:nls} close to a sum of two bubbles:
\begin{equation}
  u(t) = \eee^{i\zeta(t)}W_\mu + \eee^{i\theta(t)}W_{\lambda(t)} + g(t).
\end{equation}
One should think of $\zeta(t)$ as being close to $-\frac{\pi}{2}$, $\mu(t) \simeq 1$,
$\theta(t) \sim 0$, $\lambda(t) \ll 1$ and $\|g(t)\|_\cE \ll 1$.
In order to specify the values of the modulation parameters,
we impose the orthogonality conditions which make disappear terms linear in $g$ in the modulation equations.
There is essentially a unique choice of such orthogonality conditions.
In Lemma~\ref{lem:mod} we establish bounds on the evolution of the modulation parameters under some bootstrap assumptions.
The goal is to improve these bounds, thus closing the bootstrap.
The essential point is to improve the estimate of $g$, which is the inifinite-dimensional part.
The novelty of this paper is to use the energy conservation to deal with this.
Namely, the energy of the initial data is chosen close to $2E(W)$ and is conserved by the flow.
It turns out that if we control the modulation parameters sufficiently well, we can improve the bound on $\|g\|_\cE$
by simply expanding the formula for $E(u)$ and using coercivity of the energy near a ground state, see Step 3 of the proof of Proposition~\ref{prop:bootstrap}.

It remains to control the modulations parameters. Note that the interaction between the two bubbles appears explicitely in the modulation equation for $\lambda'(t)$,
see \eqref{eq:mod-l}. In fact, the configuration of the two bubbles (phases forming the right angle)
is chosen so as to \emph{maximize} the size of the term appearing in \eqref{eq:mod-l}
and leading to the growth of the parameter $\lambda$. The critical part of the proof consists in improving the bound \eqref{eq:bootstrap-theta} on $\theta(t)$.
To this end, we add a localized virial correction to $\theta(t)$ to cancel the main quadratic, which is $K(t)$ in the modulation equation \eqref{eq:mod-th}.
Note that the size of the term $\frac{K(t)}{\lambda(t)^2\|W\|_{L^2}^2}$ in \eqref{eq:mod-th} is $O(|t|^{-\frac{N-5}{N-6}})$.
Adding the virial correction allows us to gain a small constant on the right hand side of \eqref{eq:mod-th}, which is decisive for closing the bootstrap.

Finally, in order to deal with the linear instabilities of the flow,
we use a classical topological argument based on the Brouwer fixed point theorem.

\subsection{Acknowledgments}
Part of this work was realized when I was a PhD student at \'Ecole polytechnique.
I would like to thank my advisors Yvan Martel and Frank Merle for encouraging me to pursue this project.
I was partially supported by the ERC grant 291214 BLOWDISOL.

\subsection{Notation}
For $z = x + iy \in \bC$ we denote $\Re(z) = x$ and $\Im(z) = y$. For two functions $v, w \in L^2(\bR^N, \bC)$ we denote
\begin{equation}
  \la v, w\ra := \Re\int_{\bR^N} \conj{v(x)}\cdot w(x)\ud x.
\end{equation}
In this paper all the functions are radially symmetric. We write $L^2 := L^2_{\tx{rad}}(\bR^N; \bC)$ and $\cE := \dot H^1_{\tx{rad}}(\bR^N; \bC)$.
We will think of them as of \emph{real} vector spaces.
We denote $X^1 := \cE \cap \dot H^2(\bR^N)$.

\section{Variational estimates}
\label{sec:variational}
\subsection{Linearization near a ground state}
Recall that for $u \in \bC$ we denote $f(u) := |u|^\frac{4}{N-2}u$ and $F(u) := \frac{N-2}{2N}|u|^\frac{2N}{N-2}$.
For $u \in \bC$ we define the $\bR$-linear function $f'(u): \bC \to \bC$ by the following formula:
\begin{equation}
  f'(u)g := |u|^\frac{4}{N-2}\Big(g + \frac{4}{N-2}u\Re(u^{-1}g)\Big)
\end{equation}
(with the convention $f'(0)g = 0$). It is easy to check that for any $g, h, u \in \bC$ there holds
\begin{equation}
  \label{eq:auto-scalar}
  \Re\big(\conj h(f'(u)g)\big) = \Re \big(\conj g(f'(u)h)\big) = \Re\big((\conj{f'(u)h})g\big).
\end{equation}
Integrating this identity on $\bR^N$ we see that for a complex function $u(x)$ the operator $g \mapsto f'(u)g$ is symmetric with respect to the real $L^2$ scalar product.
We denote $|f'(u)| := \frac{N+2}{N-2}|u|^\frac{4}{N-2}$, which is the norm of $f'(u)$ as a linear map up to a constant.
For $u: \bR^N \to \bC$ we define $\|f'(u)\|_{L^p} := \big(\int_{\bR^N}|f'(u(x))|^p\ud x\big)^\frac 1p$ for $1 \leq p < +\infty$ and $\|f'(u)\|_{L^\infty} := \sup_{x \in \bR^N}|f'(u(x))|$.
\begin{lemma}
  \label{lem:pointwise}
  Let $N \geq 7$. For $z_1, z_2, z_3 \in \bC$ there holds
  \begin{gather}
    |f'(z_1 + z_2) - f'(z_1)| \lesssim |f'(z_2)|,\qquad |f'(z_1 + z_2) - f'(z_1)| \lesssim |z_1|^{-\frac{N-6}{N-2}}|z_2|\text{ if }z_1 \neq 0, \label{eq:pointwise-5} \\
    |f(z_1 + z_2) - f(z_1)| \lesssim |f'(z_1)|\cdot|z_2| + |f(z_2)|, \label{eq:pointwise-2} \\
    \begin{gathered}
      |f(z_1 + z_2) - f(z_1) - f'(z_1)z_2| \lesssim f(|z_2|) \qquad \text{and }\\ |f(z_1 + z_2) - f(z_1) - f'(z_1)z_2| \lesssim |z_1|^{-\frac{N-6}{N-2}}|z_2|^2\quad \text{ if }z_1\neq 0,
  \end{gathered} \label{eq:pointwise-1} \\
    \big|F(z_1 + z_2) - F(z_1) - \Re\big(\conj{f(z_1)}\cdot z_2\big)\big| \lesssim |f'(z_1)|\cdot|z_2|^2 + F(z_2), \label{eq:pointwise-6} \\
    \big|F(z_1 + z_2) - F(z_1) - \Re\big(\conj{f(z_1)}\cdot z_2\big) - \Re\big(\conj{f'(z_1)z_2}\cdot z_2\big)\big| \lesssim F(z_2). \label{eq:pointwise-3}
  \end{gather}
\end{lemma}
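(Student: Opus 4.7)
The lemma gathers several pointwise estimates on the nonlinearity $f(z) = |z|^p z$ (with $p := \frac{4}{N-2} \in (0, \frac{4}{5}]$) and its primitive $F$. The unifying inputs I would use are: (i) the size bounds $|f(z)| \simeq |z|^{p+1}$, $|f'(z)| \simeq |z|^p$, $|F(z)| \simeq |z|^{p+2}$; (ii) $f$ is $C^1$ on $\bC$ and smooth away from $0$, with $|Df'(z)| \lesssim |z|^{p-1}$ for $z \neq 0$; and (iii) the global Hölder estimate $\bigl||a|^p - |b|^p\bigr| \lesssim |a-b|^p$, which is exactly where the assumption $N \geq 7$ (hence $p < 1$) enters crucially.

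The heart of the lemma is \eqref{eq:pointwise-5}; I would prove it by splitting on the ratio $|z_2|/|z_1|$. If $|z_2| \geq \tfrac{1}{2}|z_1|$, the triangle inequality combined with (i) gives $|f'(z_1+z_2) - f'(z_1)| \lesssim |z_1+z_2|^p + |z_1|^p \lesssim |z_2|^p \simeq |f'(z_2)|$; moreover in this regime $|z_1|^{p-1}|z_2| \geq \tfrac{1}{2}|z_1|^p \gtrsim |z_2|^p$ since $p-1 < 0$, so both forms hold. If instead $|z_2| \leq \tfrac{1}{2}|z_1|$, then the segment from $z_1$ to $z_1+z_2$ stays in $\{|\zeta| \geq |z_1|/2\}$ where $f'$ is smooth, so the fundamental theorem of calculus applied to $t \mapsto f'(z_1+tz_2)$ yields $|f'(z_1+z_2) - f'(z_1)| \lesssim \sup_{t \in [0,1]} |Df'(z_1+tz_2)|\,|z_2| \lesssim |z_1|^{p-1}|z_2|$; and since $|z_1| \geq |z_2|$ and $p-1 < 0$, this is also $\lesssim |z_2|^{p-1}|z_2| = |z_2|^p$.

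The remaining four inequalities follow by Taylor expansions with integral remainder. For \eqref{eq:pointwise-2} and \eqref{eq:pointwise-1}, I would write $f(z_1+z_2) - f(z_1) = \int_0^1 f'(z_1+tz_2)[z_2]\,\ud t$: in \eqref{eq:pointwise-2}, bound the integrand by $|f'(z_1)||z_2| + |f'(tz_2)||z_2|$ using the first form of \eqref{eq:pointwise-5}, and integrate (using $|f'(z_2)|\cdot|z_2| \simeq |f(z_2)|$); in \eqref{eq:pointwise-1}, subtract $f'(z_1)z_2$ and apply either form of \eqref{eq:pointwise-5} directly inside the integral. For \eqref{eq:pointwise-6} and \eqref{eq:pointwise-3}, observe that $F$ is $C^2$ on $\bC$ with $DF(z)[w] = \Re(\conj{f(z)}w)$ and second variation controlled by $|f'|$, so Taylor's theorem at order $1$ reduces \eqref{eq:pointwise-6} to integrating \eqref{eq:pointwise-2} against $|z_2|$, and Taylor at order $2$ reduces \eqref{eq:pointwise-3} to integrating the universal estimate of \eqref{eq:pointwise-5} against $|z_2|^2$, both times producing a bound $\lesssim |z_2|^{p+2} \simeq F(z_2)$.

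The delicate point is \eqref{eq:pointwise-5} itself: the naïve mean value estimate produces $|z_1|^{p-1}|z_2|$, which blows up as $z_1 \to 0$, while the naïve triangle inequality produces $|z_2|^p$, which is wasteful when $|z_2| \ll |z_1|$. The hypothesis $p < 1$ is exactly what lets me dovetail the two regimes into one universal bound and one sharper bound. Everything else is bookkeeping.
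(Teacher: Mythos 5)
Your overall strategy coincides with the paper's: both dispose of the regime $|z_2|\geq\frac12|z_1|$ by the triangle inequality and the size bounds $|f'(z)|\simeq|z|^p$, $|f(z)|\simeq|z|^{p+1}$ (with $p=\frac{4}{N-2}$), and in the complementary regime $|z_2|<\frac12|z_1|$ exploit smoothness away from the origin — the paper by rescaling to $z_1=1$ and expanding the real-analytic maps $F(1+z)$, $f(1+z)$, $f'(1+z)$ for $|z|<\frac12$, you by the fundamental theorem of calculus along the segment $[z_1,z_1+z_2]$ together with the homogeneity bound $|Df'(\zeta)|\lesssim|\zeta|^{p-1}$. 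These are interchangeable, and deriving \eqref{eq:pointwise-2}, \eqref{eq:pointwise-1}, \eqref{eq:pointwise-6} by integrating \eqref{eq:pointwise-5} is a clean way to organize the bookkeeping.

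Two caveats. First, a small slip: in the regime $|z_2|\geq\frac12|z_1|$ you justify the second bound of \eqref{eq:pointwise-5} via $|z_1|^{p-1}|z_2|\geq\frac12|z_1|^p\gtrsim|z_2|^p$; the last step fails when $|z_2|\gg|z_1|$. The correct one-liner is $|z_1|^{p-1}\geq(2|z_2|)^{p-1}\gtrsim|z_2|^{p-1}$ (because $p<1$), whence $|z_1|^{p-1}|z_2|\gtrsim|z_2|^p$; the conclusion is unaffected. Second, and more substantively: your ``Taylor at order 2'' argument for \eqref{eq:pointwise-3} proves the estimate with the quadratic term $\frac12\Re\big(\conj{f'(z_1)z_2}\cdot z_2\big)$, since $D^2F(z_1)[z_2,z_2]=\Re\big(\conj{f'(z_1)z_2}\cdot z_2\big)$ enters Taylor's formula with a factor $\frac12$. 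As printed, \eqref{eq:pointwise-3} omits this factor and is then actually false (take $z_1=1$, $z_2=\epsilon$ real and small: the left-hand side is of order $\epsilon^2$, while $F(z_2)\sim\epsilon^{2+p}\ll\epsilon^2$). This is a typo in the statement — the only place it is used, \eqref{eq:energy-taylor}, requires precisely the version with the $\frac12$ — but you should flag the discrepancy explicitly rather than assert that the second-order expansion yields the inequality as written.
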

\begin{remark}
  In \eqref{eq:pointwise-5}, $|f'(z_1+ z_2) - f'(z_1)|$ denotes the norm of $f'(z_1 + z_2) - f'(z_1)$ as an $\bR$-linear map.
\end{remark}
\begin{remark}
Note that \eqref{eq:pointwise-1} implies that $f'(u)$ is the derivative (in the real sense) of $f$ at $u$,
in particular $f$ is a $C^1$ function.
\end{remark}
\begin{proof}
All the bounds are immediate if $|z_2| \geq \frac 12 |z_1|$, hence we can assume that $|z_2| < \frac 12 |z_1|$, in particular $z_1 \neq 0$.

The formulas $f'(z_1)z_2 = f(z_1)f'(1)(z_1^{-1}z_2)$ and $f'(z_1+z_2)z_3 = f(z_1)f'(1+z_1^{-1}z_2)(z_1^{-1}z_3)$
allow to reduce the proof to the case $z_1 = 1$. For $|z| < \frac 12$, the mappings $F(1 + z)$, $f(1 + z)$ and $f'(1 + z)$ are real-analytic with respect to $z$
and the required bounds follow by writing standard asymptotic expansions.


\end{proof}
We denote $Z_{\theta, \lambda} := i\Delta + if'(\eee^{i\theta}W_\lambda)$ the linearization of $i\Delta u + if(u)$ near $u = \eee^{i\theta}W_\lambda$.
In order to express $Z_{\theta, \lambda}$ in a more explicit way, we introduce the following notation:
\begin{equation}
V^+ := -\frac{N+2}{N-2}W^\frac{4}{N-2}, \qquad V^- := -W^\frac{4}{N-2}, \qquad L^+ := -\Delta +V^+, \qquad L^- := -\Delta + V^-.
\end{equation}
It is known that for all $g \in \cE$ there holds $\la g, L^- g\ra \geq 0$ and $\ker L^- = \spn(W)$.
The operator $L^+$ has one simple strictly negative eigenvalue and,
restricting to radially symmetric functions, $\ker L^+ = \spn(\Lambda W)$.

For future reference, we provide here the values of some integrals involving $W$ and $\Lambda W$:
    \begin{align}
      \int_{\bR^N} W^2 \ud x &= \frac 12 \big(N(N-2)\big)^\frac N2 B\Big(\frac{N-4}{2}, \frac N2\Big), \label{eq:explicit-1} \\
      \int_{\bR^N} W^\frac{N+2}{N-2} &= \frac 1N \big(N(N-2)\big)^\frac N2, \label{eq:explicit-2} \\
      -\frac{N+2}{N-2}\int_{\bR^N} W^\frac{4}{N-2}\Lambda W \ud x &= \frac{N-2}{2N}\big(N(N-2)\big)^\frac N2 \label{eq:explicit-3}.
    \end{align}
For the first integral, we use the formula $B(x, y) = \int_0^{+\infty}t^{x-1}(1+t)^{-x-y}\ud t$.
For the second, we write $W^\frac{N+2}{N-2} = -\Delta W$ and we integrate by parts.
For the last integral, we write $-\frac{N+2}{N-2}W^\frac{4}{N-2}\Lambda W = V^+\Lambda W = \Delta \Lambda W$ and we integrate by parts.

Using the definition of $f'$, one can check that if $g_1 = \Re g$ and $g_2 = \Im g$, then
\begin{equation}
  Z_{\theta, \lambda}(\eee^{i\theta}g_\lambda) = \frac{\eee^{i\theta}}{\lambda^2}(L^-g_2 - iL^+ g_1)_\lambda.
\end{equation}
In particular, we obtain
\begin{gather}
  Z_{\theta, \lambda}(i\eee^{i\theta}W_\lambda) = \frac{\eee^{i\theta}}{\lambda^2}(L^- W)_\lambda = 0, \\
  Z_{\theta, \lambda}(\eee^{i\theta}\Lambda W_\lambda) = \frac{\eee^{i\theta}}{\lambda^2}(-iL^+ \Lambda W)_\lambda = 0.
\end{gather}
This can also be seen by differentiating $i\Delta (\eee^{i\theta}W_\lambda) + if(\eee^{i\theta}W_\lambda)$
with respect to $\theta$ and $\lambda$.

Consider now the operator $Z_{\theta, \lambda}^*$.
We claim that $\{\eee^{i\theta}W_\lambda, i\eee^{i\theta}\Lambda W_\lambda\} \subset \ker Z_{\theta, \lambda}^*$. Indeed, we have
  \begin{gather}
    \la \eee^{i\theta}W_\lambda, Z_{\theta, \lambda}(\eee^{i\theta}g_\lambda)\ra = \big\la \eee^{i\theta}W_\lambda, \frac{\eee^{i\theta}}{\lambda^2}(L^- g_2 - iL^+g_1)_\lambda\big\ra
    = \la W, L^- g_2\ra = \la L^- W, g_2\ra = 0,
  \label{eq:conj-ker-1} \\
  \begin{aligned}
  \la i\eee^{i\theta}\Lambda W_\lambda, Z_{\theta, \lambda}(\eee^{i\theta}g_\lambda)\ra &= \big\la \eee^{i\theta}\Lambda W_\lambda, \frac{\eee^{i\theta}}{\lambda^2}(L^- g_2 - iL^+g_1)_\lambda\ra \\
  &= -\la \Lambda W, L^+ g_1\ra = -\la L^+ \Lambda W, g_1\ra = 0.
\end{aligned}
  \label{eq:conj-ker-2}
  \end{gather}

%
One can show that there exist real functions $\cY^{(1)}, \cY^{(2)} \in \cS$ and a real number $\nu > 0$ such that
\begin{equation}
  \label{eq:Y1Y2}
  L^+ \cY^{(1)} = -\nu \cY^{(2)}, \qquad L^- \cY^{(2)} = \nu \cY^{(1)}
\end{equation}
(the proof given in \cite[Section 7]{DM09} for $N = 5$ works in any dimension $N \geq 5$).
We can assume that $\|\cY^{(1)}\|_{L^2} = \|\cY^{(2)}\|_{L^2} = 1$. We denote
\begin{equation}
  \label{eq:alpha}
  \alpha_{\theta, \lambda}^+ := \frac{\eee^{i\theta}}{\lambda^2}\big(\cY_\lambda^{(2)} + i\cY_\lambda^{(1)}\big), \qquad \alpha_{\theta, \lambda}^- := \frac{\eee^{i\theta}}{\lambda^2}\big(\cY_\lambda^{(2)} - i\cY_\lambda^{(1)}\big).
\end{equation}
For $g = g_1 + ig_2$ we have $\la \alpha_{\theta, \lambda}^+, \eee^{i\theta}g_\lambda\ra = \la \cY^{(2)}, g_1\ra + \la \cY^{(1)}, g_2\ra$
and $\la \alpha_{\theta, \lambda}^-, \eee^{i\theta}g_\lambda\ra = \la \cY^{(2)}, g_1\ra - \la \cY^{(1)}, g_2\ra$.
Note that
\begin{gather}
\la W, \cY^{(1)}\ra = \frac{1}{\nu}\la W, L^- \cY^{(2)}\ra = \frac{1}{\nu}\la L^- W, \cY^{(2)}\ra = 0, \\
\la \Lambda W, \cY^{(2)}\ra = -\frac{1}{\nu}\la \Lambda W, L^+ \cY^{(1)}\ra = -\frac{1}{\nu}\la L^+(\Lambda W), \cY^{(1)}\ra = 0.
\end{gather}
It follows that
\begin{gather}
  \la \alpha_{\theta, \lambda}^+, i\eee^{i\theta}W_\lambda \ra = \la \alpha_{\theta, \lambda}^-, i\eee^{i\theta}W_\lambda\ra = 0, \label{eq:proper-iW} \\
  \la \alpha_{\theta, \lambda}^+, \eee^{i\theta}\Lambda W_\lambda \ra = \la \alpha_{\theta, \lambda}^-, \eee^{i\theta}\Lambda W_\lambda\ra = 0. \label{eq:proper-LW}
\end{gather}
Since $\cY^{(2)} \neq W$, we also have
\begin{equation}
\label{eq:Y1Y2-prod}
  \la \cY^{(1)}, \cY^{(2)}\ra = \frac{1}{\nu}\la \cY^{(2)}, L^-\cY^{(2)}\ra > 0.
\end{equation}

We claim that $\alpha_{\theta, \lambda}^+$ and $\alpha_{\theta, \lambda}^-$ are eigenfunctions of $Z_{\theta, \lambda}^*$,
with eigenvalues $\frac{\nu}{\lambda^2}$ and $-\frac{\nu}{\lambda^2}$ respectively. Indeed, we have
\begin{equation}
  \label{eq:ap-eigen}
  \begin{gathered}
    \la \alpha_{\theta, \lambda}^+, Z_{\theta, \lambda}(\eee^{i\theta}g_\lambda)\ra = \big\la\alpha_{\theta, \lambda}^+,
    \frac{\eee^{i\theta}}{\lambda^2}(L^-g_2 - iL^+ g_1)_\lambda\big\ra = \frac{1}{\lambda^2}(\la \cY^{(2)}, L^- g_2\ra - \la \cY^{(1)}, L^+ g_1\ra) \\
    = \frac{1}{\lambda^2}(\la L^-\cY^{(2)}, g_2\ra - \la L^+\cY^{(1)}, g_1\ra) = \frac{\nu}{\lambda^2}(\la \cY^{(1)}, g_2\ra + \la \cY^{(2)}, g_1\ra) = \frac{\nu}{\lambda^2}\la \alpha_{\theta, \lambda}^+, \eee^{i\theta}g_\lambda\ra.
  \end{gathered}
\end{equation}
Similarly, $\la \alpha_{\theta, \lambda}^-, Z_{\theta, \lambda}(\eee^{i\theta}g_\lambda)\ra = -\frac{\nu}{\lambda^2}\la \alpha_{\theta, \lambda}^-, \eee^{i\theta}g_\lambda\ra$.

\subsection{Coercivity of the energy near a two-bubble}
\label{ssec:coer-en}
We consider $u \in \cE$ of the form $u = \eee^{i\zeta}W_\mu + \eee^{i\theta}W_\lambda + g$
with
\begin{equation}
  \big|\zeta + \frac{\pi}{2}\big| + |\mu - 1| + |\theta| + \lambda + \|g\|_\cE \ll 1.
\end{equation}
Moreover, we will assume that $g$ satisfies
\begin{equation}
  \label{eq:orth}
  \la i\eee^{i\zeta}\Lambda W_\mu, g\ra = \la -\eee^{i\zeta}W_\mu, g\ra = \la i\eee^{i\theta}\Lambda W_{\lambda}, g\ra = \la -\eee^{i\theta}W_\lambda, g\ra =  0.
\end{equation}
This choice of the orthogonality conditions is dictated by the kernel of $Z_{\theta, \lambda}^*$, see \eqref{eq:conj-ker-1} and \eqref{eq:conj-ker-2}.
In this section this has little importance, but will be crucial in the sequel.

When $\zeta, \mu, \theta, \lambda$ and $g$ are known from the context, we denote
\begin{equation}
a_1^+ := \la \alpha_{\zeta, \mu}^+, g\ra, \qquad a_1^- := \la \alpha_{\zeta, \mu}^-, g\ra,\qquad a_2^+ := \la \alpha_{\theta, \lambda}^+, g\ra, \qquad a_2^- := \la \alpha_{\theta, \lambda}^-, g\ra.
\end{equation}

Our objective to prove the following result.
\begin{proposition}
  \label{prop:coercivity}
  There exist constants $\eta, C_0, C > 0$ depending only on $N$ such that for all $u \in \cE$ of the form $u = \eee^{i\zeta}W_\mu + \eee^{i\theta}W_\lambda + g$,
  with $\big|\zeta + \frac{\pi}{2}\big| + |\mu - 1| + |\theta| + \lambda + \|g\|_\cE \leq \eta$ and $g$ verifying \eqref{eq:orth}, there holds
  \begin{gather}
    \label{eq:coer-bound}
    |E(u) - 2E(W)| \leq C\Big(\big(\big|\zeta + \frac{\pi}{2}\big| + |\mu - 1| + |\theta| + \lambda\big)\lambda^\frac{N-2}{2} + \|g\|_\cE^2 \Big), \\
      \label{eq:coer-conclusion}
      \begin{aligned}
        \|g\|_\cE^2 + C_0 \theta\lambda^\frac{N-2}{2} &\leq C\Big(\lambda^\frac{N-2}{2}\big(\big|\zeta+\frac{\pi}{2}\big| + |\mu - 1|
        + |\theta|^3 + \lambda\big) \\
      &+ E(u) - 2E(W) + \sum_{j = 1, 2}\big((a_j^+)^2 + (a_j^-)^2\big)\Big).
    \end{aligned}
    \end{gather}
\end{proposition}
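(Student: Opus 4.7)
The plan is to Taylor-expand $E(u)$ to second order around the two-bubble background $\varphi := \eee^{i\zeta}W_\mu + \eee^{i\theta}W_\lambda$:
\begin{equation*}
E(u) = E(\varphi) + \la \vD E(\varphi), g\ra + \tfrac{1}{2}\cQ_\varphi(g) + \cR,
\end{equation*}
where $\cQ_\varphi(g) := \int|\grad g|^2 - \Re\int\conj{f'(\varphi)g}\cdot g$ is the Hessian quadratic form of $E$ at $\varphi$. The pointwise estimate \eqref{eq:pointwise-3} combined with Sobolev embedding yields $|\cR|\lesssim \|g\|_\cE^{2N/(N-2)}$, which is absorbable into $\|g\|_\cE^2$ for small $g$.

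For the background energy $E(\varphi) - 2E(W)$, invariance of $E$ under rotation and scaling kills the diagonal pieces. Integration by parts and the ground state equation identify the cross kinetic term as $\cos(\theta-\zeta)\int f(W_\mu) W_\lambda$; for the cross nonlinear term I would split the spatial integral along the crossover $|x|\approx\sqrt{\mu\lambda}$ and Taylor-expand $F$ around the dominant bubble on each side via \eqref{eq:pointwise-6}, obtaining
\begin{equation*}
E(\varphi) - 2E(W) = -\cos(\theta-\zeta)\int f(W_\mu) W_\lambda + O\big(\lambda^\frac{N-2}{2}(|\mu-1| + \lambda + |\zeta+\tfrac{\pi}{2}|^3 + |\theta|^3)\big).
\end{equation*}
A direct scaling analysis gives $\int f(W_\mu)W_\lambda = C_0\lambda^{(N-2)/2}(1 + O(\lambda))$ for an explicit $C_0 > 0$, and expanding $\cos(\theta-\zeta) = \sin((\zeta + \tfrac{\pi}{2}) - \theta) = (\zeta+\tfrac{\pi}{2}) - \theta + O(\cdot^3)$ isolates the leading contribution $-C_0((\zeta+\tfrac{\pi}{2}) - \theta)\lambda^{(N-2)/2}$. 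Next, $\vD E(\varphi) = f(\eee^{i\zeta}W_\mu) + f(\eee^{i\theta}W_\lambda) - f(\varphi)$ is a pure interaction quantity whose dual norm vanishes as $\lambda \to 0$ (bounded via \eqref{eq:pointwise-1}); Young's inequality then gives $|\la \vD E(\varphi), g\ra|\leq \varepsilon\|g\|_\cE^2 + C\lambda^{(N+2)/2}$, comfortably within the $\lambda^{N/2}$ budget of \eqref{eq:coer-conclusion}.

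The main obstacle will be the coercivity of $\cQ_\varphi$ under the orthogonality conditions \eqref{eq:orth}. The plan is to invoke the single-bubble coercivity---a standard consequence of the spectral theory of $L^\pm$ and the construction of $\cY^{(1)}, \cY^{(2)}$ underlying $\alpha^\pm$---so that each of $\cQ_{\zeta,\mu}$ and $\cQ_{\theta,\lambda}$ satisfies $\cQ(h)\geq c\|h\|_\cE^2 - C((a_j^+)^2+(a_j^-)^2)$ under the two conditions in \eqref{eq:orth} involving the $j$-th bubble. Since the two bubbles live at widely separated scales $\lambda\ll 1\simeq\mu$, their interaction in the quadratic form is weak, and I would then argue by contradiction and concentration-compactness: if coercivity failed along a sequence $g_n$ with $\|g_n\|_\cE = 1$, $\cQ_{\varphi_n}(g_n)\to 0$, modulation parameters converging, and $a_{j,n}^\pm\to 0$, a profile decomposition would extract a large-scale profile $g^{(1)}$ at scale $1$ and a rescaled small-scale profile $g^{(2)}$ at scale $\lambda_n$, each belonging to the kernel of the corresponding single-bubble Hessian; the orthogonality conditions force $g^{(j)} = 0$, contradicting $\|g_n\|_\cE = 1$. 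This yields $\cQ_\varphi(g)\geq c\|g\|_\cE^2 - C\sum_{j=1,2}\big((a_j^+)^2+(a_j^-)^2\big)$ for $\eta$ small.

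Finally, \eqref{eq:coer-bound} follows from the triangle inequality applied to the expansion, using $|\cos(\theta-\zeta)|\leq |\zeta+\tfrac{\pi}{2}|+|\theta|$ and $|\cQ_\varphi(g)|\lesssim\|g\|_\cE^2$. For \eqref{eq:coer-conclusion}, I rearrange the expansion and apply the coercivity to obtain
\begin{equation*}
\|g\|_\cE^2 \lesssim 2\big(E(u) - E(\varphi)\big) - 2\la\vD E(\varphi), g\ra + \sum_{j=1,2}\big((a_j^+)^2+(a_j^-)^2\big) + O(\|g\|_\cE^{2N/(N-2)}),
\end{equation*}
then substitute the expansion of $E(\varphi) - 2E(W)$ and transfer the leading $-C_0\theta\lambda^{(N-2)/2}$ piece to the left as $+C_0\theta\lambda^{(N-2)/2}$; the remaining $+C_0(\zeta+\tfrac{\pi}{2})\lambda^{(N-2)/2}$ is absorbed into the allowed $|\zeta+\tfrac{\pi}{2}|\lambda^{(N-2)/2}$, and the cubic/scale errors fit the form of \eqref{eq:coer-conclusion}.
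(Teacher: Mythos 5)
Your proposal is correct and, in its overall architecture, coincides with the paper's proof: the second-order expansion \eqref{eq:energy-taylor} with remainder $O(\|g\|_\cE^{2N/(N-2)})$, the computation of the two-bubble energy (your $-\cos(\theta-\zeta)\int f(W_\mu)W_\lambda$ equals, via $\cos(\theta-\zeta)=\sin\big((\zeta+\tfrac{\pi}{2})-\theta\big)$, $\int f(W_\mu)W_\lambda=\big(1+O(|\mu-1|+\lambda)\big)\lambda^{\frac{N-2}{2}}\int W^{\frac{N+2}{N-2}}$ and \eqref{eq:explicit-2}, exactly the leading term $\frac 1N\big(N(N-2)\big)^{\frac N2}\theta\lambda^{\frac{N-2}{2}}$ of Lemma~\ref{lem:coer-sans-g}, your extra tracking of the linear $(\zeta+\tfrac{\pi}{2})$ piece being harmless since the statement tolerates it), the duality bound $\lesssim\lambda^{\frac{N+2}{4}}\|g\|_\cE$ on the linear term as in Lemma~\ref{lem:energy-linear}, and the final rearrangement. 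The genuinely different ingredient is the coercivity of the two-bubble Hessian under \eqref{eq:orth} modulo $\sum_j\big((a_j^+)^2+(a_j^-)^2\big)$: the paper (Lemma~\ref{lem:coer-L-two}, following the cited companion paper) localizes, cutting space at a radius intermediate between $\lambda$ and $1$ and invoking the localized single-bubble estimates \eqref{eq:coer-L-2}--\eqref{eq:coer-L-3} --- which is precisely why those localized variants are recorded in Lemma~\ref{lem:coer-Lp-Lm} and Proposition~\ref{prop:coer-L} --- whereas you argue by contradiction with a two-scale concentration-compactness argument. Your route does work and avoids the localized lemmas, at the price of being non-quantitative and requiring the standard precautions to be made explicit: the quantifier structure (negating ``there exists $\eta$'') is what allows you to take $\lambda_n\to 0$ along the contradicting sequence (if $\lambda_n$ stayed bounded below, the limiting form would again be a two-bubble Hessian and the argument would be circular); the cross term $f'(\eee^{i\zeta}W_\mu+\eee^{i\theta}W_\lambda)-f'(\eee^{i\zeta}W_\mu)-f'(\eee^{i\theta}W_\lambda)$ must be shown $o(1)$ in $L^{N/2}$, the potential terms must pass to the limit at each scale, the gradient Pythagorean defect is nonnegative, and the orthogonality relations and $a_j^\pm\to 0$ must be transferred to the rescaled weak limits (the test functions concentrate at one scale, so they pair to $o(1)$ with the profile at the other scale); the limits are then null vectors of the constrained, strictly coercive single-bubble forms (not literally kernel elements of the Hessian), hence vanish. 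Two minor slips that do not affect correctness: \eqref{eq:pointwise-2} rather than \eqref{eq:pointwise-1} is the natural pointwise bound for $f(\eee^{i\zeta}W_\mu+\eee^{i\theta}W_\lambda)-f(\eee^{i\zeta}W_\mu)-f(\eee^{i\theta}W_\lambda)$, and the single-bubble coercivity with the $\alpha^\pm$ corrections that you call standard is itself the content of Lemma~\ref{lem:coer-Lp-Lm} and Proposition~\ref{prop:coer-L} (the Nakanishi--Roy argument), so it deserves at least a citation rather than being taken for granted.
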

The scheme of the proof is the following. The inequality \eqref{eq:pointwise-3} yields the Taylor expansion of the energy:
\begin{equation}
  \label{eq:energy-taylor}
  \Big|E(u) - E(\eee^{i\zeta}W_\mu + \eee^{i\theta}W_\lambda) - \la \vD E(\eee^{i\zeta}W_\mu + \eee^{i\theta}W_\lambda), g\ra -
  \frac 12 \la \vD^2 E(\eee^{i\zeta}W_\mu + \eee^{i\theta}W_\lambda)g, g\ra\Big| \lesssim \|g\|_\cE^\frac{2N}{N-2}.
\end{equation}
We just have to compute all the terms with a sufficiently high precision.
We split this computation into a few lemmas.
\begin{lemma}
  \label{lem:coer-sans-g}
  Let $\zeta, \mu, \theta, \lambda$ be as in Proposition~\ref{prop:coercivity}. Then
  \begin{equation}
    \label{eq:coer-sans-g}
    \Big|E(\eee^{i\zeta}W_\mu + \eee^{i\theta}W_\lambda) - 2E(W) - \frac 1N \big(N(N-2)\big)^\frac N2 \theta\lambda^\frac{N-2}{2}\Big|
    \leq C\lambda^\frac{N-2}{2}\big(\big|\zeta + \frac{\pi}{2}\big| + |\mu - 1| + |\theta|^3 + \lambda\big),
  \end{equation}
  with a constant $C$ depending only on $N$.
\end{lemma}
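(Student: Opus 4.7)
My plan is to expand $E(\eee^{i\zeta}W_\mu + \eee^{i\theta}W_\lambda) - 2E(W)$ into a kinetic and a potential interaction. Writing $z_1 := \eee^{i\zeta}W_\mu$ and $z_2 := \eee^{i\theta}W_\lambda$, each is a ground state up to symmetries, so $E(z_j) = E(W)$ and
\[
E(z_1+z_2) - 2E(W) = \Re\!\int\conj{\grad z_1}\cdot\grad z_2\,\ud x - \int\big[F(z_1+z_2) - F(z_1) - F(z_2)\big]\ud x.
\]
The kinetic piece will be computed exactly by integration by parts using $-\Delta z_1 = f(z_1) = \eee^{i\zeta}W_\mu^{(N+2)/(N-2)}$, yielding $\cos(\theta-\zeta)\int W_\mu^{(N+2)/(N-2)}W_\lambda\,\ud x$. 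For the potential piece, I use the pointwise identity $|z_1+z_2|^2 = W_\mu^2+W_\lambda^2 + 2\cos(\theta-\zeta)W_\mu W_\lambda$ and Taylor-expand $|z_1+z_2|^{2N/(N-2)}$ in the small parameter $\cos(\theta-\zeta)$: the linear-in-$\cos$ coefficient gives $\frac{2N}{N-2}\cos(\theta-\zeta)\int W_\mu W_\lambda(W_\mu^2+W_\lambda^2)^{2/(N-2)}\,\ud x$, the $\cos^2$ coefficient integral is $O(\lambda^{N/2})$, and the $\cos$-independent remainder $B_0 := \frac{N-2}{2N}\int[(W_\mu^2+W_\lambda^2)^{N/(N-2)} - W_\mu^{2N/(N-2)} - W_\lambda^{2N/(N-2)}]\,\ud x$ is also $O(\lambda^{N/2})$, both by pointwise Taylor in the macroscopic region $\{W_\mu \gg W_\lambda\}$ and the microscopic region $\{W_\lambda \gg W_\mu\}$, together with the substitution $x = \lambda y$ and the integrability $\int W^2\,\ud y < \infty$ available for $N \geq 7$.

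The heart of the argument is the asymptotic analysis, as $\lambda \to 0$, of $Q(\mu,\lambda) := \int W_\mu W_\lambda(W_\mu^2+W_\lambda^2)^{2/(N-2)}\,\ud x$. I split $\bR^N$ at the crossover scale $|x| \sim \sqrt\lambda$: in the macroscopic region the integrand is approximately $W_\mu^{(N+2)/(N-2)}W_\lambda$, whereas in the microscopic region it is approximately $W_\mu W_\lambda^{(N+2)/(N-2)}$. By the integration-by-parts identity $\int W_\mu^{(N+2)/(N-2)}W_\lambda\,\ud x = \int W_\mu W_\lambda^{(N+2)/(N-2)}\,\ud x$, these two leading contributions are \emph{equal}, so $Q(\mu,\lambda) = 2\int W_\mu^{(N+2)/(N-2)}W_\lambda\,\ud x + O(\lambda^{N/2})$. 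Combining with the kinetic term, the linear-in-$\cos$ part of $E(z_1+z_2) - 2E(W)$ simplifies to $-\cos(\theta-\zeta)\int W_\mu^{(N+2)/(N-2)}W_\lambda\,\ud x \cdot (1+O(\lambda))$.

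The asymptotic of this remaining integral follows from the pointwise limit $\lambda^{-(N-2)/2}W_\lambda(x) \to (N(N-2))^{(N-2)/2}|x|^{-(N-2)}$ for $x \neq 0$, combined with the fundamental-solution identity $-\Delta(|x|^{-(N-2)}) = (N-2)\omega_{N-1}\delta_0$ and integration by parts against $W_\mu^{(N+2)/(N-2)} = -\Delta W_\mu$, yielding $\int W_\mu^{(N+2)/(N-2)}W_\lambda\,\ud x = \tfrac{1}{N}(N(N-2))^{N/2}\mu^{-(N-2)/2}\lambda^{(N-2)/2}(1+O(\lambda))$. Finally, expanding $\cos(\theta-\zeta) = -\sin(\theta-(\zeta+\tfrac{\pi}{2})) = -\theta + (\zeta+\tfrac{\pi}{2}) + O(|\theta-(\zeta+\tfrac{\pi}{2})|^3)$ and $\mu^{-(N-2)/2} = 1 + O(|\mu-1|)$ produces the claimed leading term $\tfrac{1}{N}(N(N-2))^{N/2}\,\theta\lambda^{(N-2)/2}$, with all corrections---$\mu$-dependence, cubic remainder from the sine expansion, $B_0$, and $\cos^2$ contributions---absorbed into $O\big(\lambda^{(N-2)/2}(|\zeta+\tfrac{\pi}{2}| + |\mu-1| + |\theta|^3 + \lambda)\big)$. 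The main obstacle is the factor-of-two in $Q(\mu,\lambda)$: if one naively approximated $Q$ by $\int W_\mu^{(N+2)/(N-2)}W_\lambda\,\ud x$ alone, the kinetic interaction and the linear-in-$\cos$ potential piece would cancel at leading order; it is precisely by retaining the microscopic contribution (equal to the macroscopic one by the IBP identity) that one recovers the correct nonzero coefficient with the correct sign.
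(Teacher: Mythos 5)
Your proposal is correct, and it reaches the same leading term by a genuinely different bookkeeping than the paper. The paper first rewrites the kinetic cross term as $\Re\int\conj{\eee^{i\theta}W_\lambda}f(\eee^{i\zeta}W_\mu)\ud x$ and absorbs it into the potential bracket, then applies the pointwise inequality \eqref{eq:pointwise-6} twice with the roles of the two bubbles exchanged in the regions $|x|\geq\sqrt\lambda$ and $|x|\leq\sqrt\lambda$, so the whole interaction collapses to $-\int_{|x|\leq\sqrt\lambda}\Re\big(\conj{\eee^{i\zeta}W_\mu}f(\eee^{i\theta}W_\lambda)\big)\ud x \approx -\Re(\eee^{i(\zeta-\theta)})\int W_\lambda^{\frac{N+2}{N-2}}\ud x$; the factor-of-two issue you emphasize never appears explicitly. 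You instead Taylor-expand the potential in the genuinely small parameter $\cos(\theta-\zeta)$ (legitimate since $2W_\mu W_\lambda\leq W_\mu^2+W_\lambda^2$ keeps the base nondegenerate), identify the linear coefficient $Q=\int W_\mu W_\lambda(W_\mu^2+W_\lambda^2)^{\frac{2}{N-2}}\ud x = 2\int W_\mu^{\frac{N+2}{N-2}}W_\lambda\ud x + O(\lambda^{N/2})$ via the splitting at $\sqrt\lambda$ and the integration-by-parts identity, and cancel one copy against the kinetic term; you then evaluate the interaction integral by a Green's-function argument rather than the paper's direct substitution $W_\mu\approx 1$ on $|x|\leq\sqrt\lambda$ (both are consistent with \eqref{eq:explicit-2} in the paper's normalization). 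Your route makes transparent why a naive macroscopic approximation of the potential interaction would spuriously cancel the kinetic term, which is a nice structural insight; the paper's route is shorter because the cancellation is performed implicitly by the choice of which bubble plays the role of $z_1$ in each region. Two small points you should make explicit when writing this up: the coefficient $\frac{2N}{N-2}$ you quote is for $|z_1+z_2|^{\frac{2N}{N-2}}$, so after the prefactor $\frac{N-2}{2N}$ in $F$ the linear term is exactly $\cos(\theta-\zeta)\,Q$ (the clean cancellation requires this constant to be $1$); and the bound on the $\cos^2$ remainder should be justified by the pointwise estimate of the second derivative, namely $\lesssim\min\big(W_\lambda^2 W_\mu^{\frac{4}{N-2}},\,W_\mu^2 W_\lambda^{\frac{4}{N-2}}\big)$, which indeed integrates to $O(\lambda^{N/2})$ over the two regions for all $N\geq 7$.
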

\begin{proof}
  Expanding the energy we find
  \begin{equation}
    \label{eq:energy-expansion}
    \begin{aligned}
      E(\eee^{i\zeta}W_\mu + \eee^{i\theta}W_\lambda) &= E(\eee^{i\zeta}W_\mu) + E(\eee^{i\theta}W_\lambda) + \Re\int_{\bR^N}\eee^{i(\zeta-\theta)}\grad(W_\mu)\cdot\grad(W_\lambda)\ud x \\
      &- \int_{\bR^N}\big(F(\eee^{i\zeta}W_\mu + \eee^{i\theta}W_\lambda) - F(\eee^{i\zeta}W_\mu) - F(\eee^{i\theta}W_\lambda)\big)\ud x.
    \end{aligned}
  \end{equation}
  By scaling invariance, $E(\eee^{i\zeta}W_\mu) + E(\eee^{i\theta}W_\lambda) = 2E(W)$. Integrating by parts we get
  \begin{equation}
    \Re\int_{\bR^N}\eee^{i(\zeta-\theta)}\grad(W_\mu)\cdot\grad(W_\lambda)\ud x = -\Re \int_{\bR^N} \conj{\eee^{i\theta} W_\lambda}\Delta (\eee^{i\zeta}W_\mu)\ud x = \Re \int_{\bR^N} \conj{\eee^{i\theta} W_\lambda}\cdot f(\eee^{i\zeta}W_\mu)\ud x,
  \end{equation}
  hence \eqref{eq:energy-expansion} yields
  \begin{equation}
    \label{eq:energy-expansion-1}
    \begin{aligned}
    &E(\eee^{i\zeta}W_\mu + \eee^{i\theta}W_\lambda) = 2E(W) \\
    &- \int_{\bR^N}\big(F(\eee^{i\zeta}W_\mu + \eee^{i\theta}W_\lambda) - F(\eee^{i\zeta}W_\mu) - F(\eee^{i\theta}W_\lambda)-\Re\big( \conj{\eee^{i\theta} W_\lambda}\cdot f(\eee^{i\zeta}W_\mu)\big)\big)\ud x.
  \end{aligned}
  \end{equation}

  In the region $|x| \geq \sqrt\lambda$, 
  using \eqref{eq:pointwise-6} with $z_1 = \eee^{i\zeta}W_\mu$
  and $z_2 = \eee^{i\theta}W_\lambda$, we obtain
  \begin{equation}
    \big|F(\eee^{i\zeta}W_\mu + \eee^{i\theta}W_\lambda) - F(\eee^{i\zeta}W_\mu) - F(\eee^{i\theta}W_\lambda)-\Re\big( \conj{\eee^{i\theta} W_\lambda}\cdot f(\eee^{i\zeta}W_\mu)\big)\big| \lesssim W_\lambda^2,
  \end{equation}
  and we see that
\begin{equation}
  \int_{|x| \geq \sqrt\lambda}W_\lambda^2 = \lambda^2 \int_{|x| \geq 1/\sqrt\lambda}W^2 \ud x \lesssim \lambda^2 \int_{1/\sqrt\lambda}^{+\infty}r^{-2N+4}r^{N-1}\ud r = \lambda^{2 + \frac{N-4}{2}} = \lambda^\frac N2.
\end{equation}
In the region $|x| \leq \sqrt\lambda$ the last term in \eqref{eq:energy-expansion-1} is negligible,
  because $\big|\Re\big(\conj{\eee^{i\theta}W_\lambda}\cdot f(\eee^{i\zeta}W_\mu)\big)\big| \lesssim W_\lambda$
  and $\int_{|x|\leq \sqrt\lambda} W_\lambda \ud x \lesssim \lambda^\frac{N+2}{2}\int_0^{1/\sqrt\lambda}r^{-N+2}r^{N-1}\ud r \sim \lambda^\frac N2$. Similarly, the term $F(\eee^{i\zeta}W_\mu)$ is negligible.
 Using \eqref{eq:pointwise-6} with $z_1 = \eee^{i\theta}W_\lambda$ and $z_2 = \eee^{i\zeta}W_\mu$, we obtain
  \begin{equation}
    \big|F(\eee^{i\zeta}W_\mu + \eee^{i\theta}W_\lambda) - F(\eee^{i\theta}W_\lambda)-\Re\big( \conj{\eee^{i\zeta} W_\mu}\cdot f(\eee^{i\theta}W_\lambda)\big)\big| \lesssim W_\lambda^\frac{4}{N-2},
  \end{equation}
and we see that
\begin{equation}
  \int_{|x| \leq \sqrt\lambda}W_\lambda^\frac{4}{N-2} = \lambda^{N-2} \int_{|x| \leq 1/\sqrt\lambda}W^\frac{4}{N-2} \ud x \lesssim \lambda^{N-2} \int_{1/\sqrt\lambda}^{+\infty}r^{-4}r^{N-1}\ud r = \lambda^{N-2 - \frac{N-4}{2}} = \lambda^\frac N2.
\end{equation}
In order to complete the proof of \eqref{eq:coer-sans-g}, we thus need to check that
\begin{equation}
  \label{eq:energy-expansion-2}
  \begin{aligned}
  &\Big|{-}\int_{|x|\leq \sqrt\lambda}\Re\big( \conj{\eee^{i\zeta} W_\mu}\cdot f(\eee^{i\theta}W_\lambda)\big)\ud x - \frac 1N (N(N-2))^\frac N2 \theta\lambda^\frac{N-2}{2}\Big| \\
  &\lesssim \lambda^\frac{N-2}{2}\big(\big|\zeta + \frac{\pi}{2}\big| + |\mu - 1| + |\theta|^3 + \lambda\big).
\end{aligned}
\end{equation}
There holds
\begin{equation}
  \begin{aligned}
  &\Big|\int_{|x|\leq \sqrt\lambda}\Re\big( \conj{\eee^{i\zeta} W_\mu}\cdot f(\eee^{i\theta}W_\lambda)\big)\ud x -
  \Re\big(\eee^{i(\zeta-\theta)}\big)\int_{\bR^N}W_\lambda^\frac{N+2}{N-2}\ud x\Big|  \\
  &\lesssim \int_{|x| \leq \sqrt\lambda}|W_\mu - 1|W_\lambda^\frac{N+2}{N-2}\ud x + \int_{|x|\geq \sqrt\lambda}W_\lambda^\frac{N+2}{N-2}\ud x \\
  &\lesssim (|\mu - 1| + \lambda)\int_{|x| \leq \sqrt\lambda}W_\lambda^\frac{N+2}{N-2} + \int_{|x| \geq \sqrt\lambda}W_\lambda^\frac{N+2}{N-2}\ud x \\
  &\lesssim (|\mu - 1| + \lambda)\lambda^\frac{N-2}{2} + \lambda^\frac{N-2}{2} \int_{|x| \geq 1/\sqrt\lambda}W^\frac{N+2}{N-2}\ud x \lesssim (|\mu - 1| + \lambda)\lambda^\frac{N-2}{2}
\end{aligned}
\end{equation}
and
\begin{equation}
  \int_{\bR^N}W_\lambda^\frac{N+2}{N-2}\ud x = \lambda^\frac{N-2}{2} \int_{\bR^N}W^\frac{N+2}{N-2}\ud x = \frac 1N\big(N(N-2)\big)^\frac N2\lambda^\frac{N-2}{2}.
\end{equation}
We have
\begin{equation}
  \big|\Re(-i\eee^{-i\theta}) + \theta\big| = \big|\Im(\eee^{-i\theta}) + \theta\big| \lesssim |\theta|^3
\end{equation}
and, using \eqref{eq:explicit-2},
\begin{equation}
  \big|\eee^{i(\zeta - \theta)} + i\eee^{-i\theta}\big| = \big|\eee^{i\zeta} + i\big| \leq \big|\zeta + \frac{\pi}{2}\big|,
\end{equation}
hence
\begin{equation}
  \label{eq:real-part}
  \big|\Re(\eee^{i(\zeta - \theta)}) + \theta\big| \lesssim |\theta|^3 + \big|\zeta + \frac{\pi}{2}\big| \lesssim |t|^{-\frac{3}{N-6}}.
\end{equation}
The bound \eqref{eq:energy-expansion-2} follows now from \eqref{eq:real-part}, which finishes the proof.
\end{proof}
\begin{lemma}
  \label{lem:energy-linear}
  Under the assumptions of Proposition~\ref{prop:coercivity}, there holds
  \begin{equation}
    \label{eq:energy-linear}
    \big|\la \vD E(\eee^{i\zeta}W_\mu + \eee^{i\theta}W_\lambda), g\ra\big| \lesssim \|g\|_\cE\cdot \lambda^\frac{N+2}{4}.
  \end{equation}
\end{lemma}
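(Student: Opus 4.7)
The plan is to first use the elliptic identity $-\Delta W = f(W)$, which upon scaling and phase-rotation yields $-\Delta(\eee^{i\zeta}W_\mu) = f(\eee^{i\zeta}W_\mu)$ and similarly for $\eee^{i\theta}W_\lambda$, to rewrite
\[
\vD E(\eee^{i\zeta}W_\mu + \eee^{i\theta}W_\lambda) = f(\eee^{i\zeta}W_\mu) + f(\eee^{i\theta}W_\lambda) - f(\eee^{i\zeta}W_\mu + \eee^{i\theta}W_\lambda) =: h,
\]
the nonlinear interaction term which vanishes whenever either bubble is absent. By H\"older's inequality and the Sobolev embedding $\dot H^1 \hookrightarrow L^{2N/(N-2)}$ applied to $g$, the claim will then reduce to showing $\|h\|_{L^{2N/(N+2)}} \lesssim \lambda^{(N+2)/4}$. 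The orthogonality conditions \eqref{eq:orth} play no role in this estimate.

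Next I will bound $h$ pointwise. Split $\bR^N$ into $A := \{|x| \leq \sqrt\lambda\}$ and $B := \{|x| > \sqrt\lambda\}$. On $A$ one has $W_\mu \sim 1$ and $W_\lambda \gtrsim 1$, so the $\lambda$-bubble dominates; applying \eqref{eq:pointwise-2} with $z_1 = \eee^{i\theta}W_\lambda$ and $z_2 = \eee^{i\zeta}W_\mu$ gives $|h| \lesssim W_\lambda^{4/(N-2)} + 1$. On $B$ the $\mu$-bubble dominates, and exchanging the roles of $z_1, z_2$ in \eqref{eq:pointwise-2} yields $|h| \lesssim W_\mu^{4/(N-2)} W_\lambda + W_\lambda^{(N+2)/(N-2)}$.

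Setting $p := 2N/(N+2)$, the remaining task is to show that each of the four resulting $L^p$-integrals is $O(\lambda^{N/2})$; by $1/p = (N+2)/(2N)$ this delivers $\|h\|_{L^p} \lesssim \lambda^{(N+2)/4}$. Two of them are immediate: $\int_A 1\,\ud x = |A| \sim \lambda^{N/2}$, and the scale-invariant integral $\int_B W_\lambda^{(N+2)p/(N-2)}\ud x = \int_B W_\lambda^{2N/(N-2)}\ud x \sim \lambda^{N/2}$ is controlled by the tail decay $W(y) \sim |y|^{-(N-2)}$ at infinity. For the remaining two, $\int_A W_\lambda^{4p/(N-2)}\ud x$ and $\int_B W_\mu^{4p/(N-2)} W_\lambda^p\ud x$ (where I use $W_\mu \lesssim 1$ globally), the change of variables $y = x/\lambda$ reduces each to $\lambda^{N - q(N-2)/2}$ times an incomplete integral of $W^q$; the divergent part of the latter exactly compensates the prefactor to again yield $\lambda^{N/2}$. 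The hypothesis $N \geq 7$ enters here, since the two (sub)criticality conditions $4p/(N-2) < N/(N-2)$ and $p(N-2) > N$ are both equivalent to $N > 6$.

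The hard part will be the algebraic bookkeeping needed to verify that all four sub-integrals conspire to yield the same sharp exponent $\lambda^{N/2}$ in every region. Once this is checked, summing and taking the $p$-th root completes the proof.
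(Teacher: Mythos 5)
Your proposal is correct and follows essentially the same route as the paper: reduce to the interaction term $f(\eee^{i\zeta}W_\mu)+f(\eee^{i\theta}W_\lambda)-f(\eee^{i\zeta}W_\mu+\eee^{i\theta}W_\lambda)$ using $\vD E(\eee^{i\zeta}W_\mu)=\vD E(\eee^{i\theta}W_\lambda)=0$, apply H\"older and Sobolev to pass to an $L^{\frac{2N}{N+2}}$ bound, split at $|x|=\sqrt\lambda$, and use \eqref{eq:pointwise-2} in each region; your exponent computations (each region contributing $\lambda^{N/2}$ to the $p$-th power of the norm, with $N>6$ governing the divergence/convergence of the incomplete integrals) check out. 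The only cosmetic difference is that the paper collapses the pointwise bound to a single dominant term per region ($W_\lambda^{4/(N-2)}$ for $|x|\le\sqrt\lambda$, $W_\lambda$ for $|x|\ge\sqrt\lambda$) using $W_\mu\lesssim W_\lambda$ resp.\ $W_\lambda\lesssim W_\mu$, whereas you track four terms separately.
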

\begin{proof}
  Using the fact that $\vD E(\eee^{i\zeta}W_\mu) = \vD E(\eee^{i\theta}W_\lambda) = 0$, \eqref{eq:energy-linear} is seen
  to be equivalent to
  \begin{equation}
    \label{eq:energy-linear-1}
    \big|\la f(\eee^{i\zeta} W_\mu + \eee^{i\theta}W_\lambda) - f(\eee^{i\zeta}W_\mu) - f(\eee^{i\theta}W_\lambda), g\ra\big| \lesssim \|g\|_\cE\cdot \lambda^\frac{N+2}{4}.
  \end{equation}
  By the Sobolev inequality, it suffices to check that
  \begin{equation}
    \label{eq:energy-linear-2}
    \|f(\eee^{i\zeta} W_\mu + \eee^{i\theta}W_\lambda) - f(\eee^{i\zeta} W_\mu) - f(\eee^{i\theta}W_\lambda)\|_{L^\frac{2N}{N+2}} \lesssim \lambda^\frac{N+2}{4}.
  \end{equation}
  As usual, we consider separately the regions $|x| \leq \sqrt\lambda$ and $|x| \geq \sqrt\lambda$. In the first region we have $W_\mu \lesssim W_\lambda$,
  hence \eqref{eq:pointwise-2} with $z_1 = W_\lambda$ and $z_2 = W_\mu$ yields
  \begin{equation}
    \|f(\eee^{i\zeta} W_\mu + \eee^{i\theta}W_\lambda) - f(\eee^{i\zeta} W_\mu) - f(\eee^{i\theta}W_\lambda)\|_{L^\frac{2N}{N+2}} \lesssim W_\lambda^\frac{4}{N-2}W_\mu + W_\mu^\frac{N+2}{N-2} \lesssim W_\lambda^\frac{4}{N-2}W_\mu \lesssim W_\lambda^\frac{4}{N-2}.
  \end{equation}
  By a change of variable we obtain
  \begin{equation}
    \begin{aligned}
      \|W_\lambda^\frac{4}{N-2}\|_{L^\frac{2N}{N+2}(|x| \leq \sqrt\lambda)} &= \lambda^{N\cdot \frac{N+2}{2N} - \frac{N-2}{2}\cdot \frac{4}{N-2}}\|W^\frac{4}{N-2}\|_{L^\frac{2N}{N+2}(|x| \leq 1/\sqrt\lambda)} \\
      &\lesssim \lambda^\frac{N-2}{2}\Big(\int_0^{1/\sqrt\lambda} r^{-4\frac{2N}{N+2}}r^{N-1}\ud r\Big)^\frac{N+2}{2N} \sim \lambda^{\frac{N-2}{2} - \frac{(N-6)N}{2(N+2)}\cdot \frac{N+2}{2N}}=\lambda^\frac{N+2}{4}.
    \end{aligned}
  \end{equation}

  In the region $|x| \geq \sqrt\lambda$ we have $W_\lambda \lesssim W_\mu$, hence \eqref{eq:pointwise-2} with $z_1 = W_\mu$ and $z_2 = W_\lambda$ yields
  \begin{equation}
    \|f(\eee^{i\zeta} W_\mu + \eee^{i\theta}W_\lambda) - f(\eee^{i\zeta} W_\mu) - f(\eee^{i\theta}W_\lambda)\|_{L^\frac{2N}{N+2}} \lesssim W_\mu^\frac{4}{N-2}W_\lambda + W_\lambda^\frac{N+2}{N-2} \lesssim W_\mu^\frac{4}{N-2}W_\lambda \lesssim W_\lambda,
  \end{equation}
and we have
\begin{equation}
  \begin{aligned}
    \|W_\lambda\|_{L^\frac{2N}{N+2}(|x|\geq \sqrt\lambda)} &= \lambda^2 \|W\|_{L^\frac{2N}{N+2}(|x| \geq 1/\sqrt\lambda)} \\
    &\lesssim \lambda^2 \Big(\int_{1/\sqrt\lambda}^{+\infty}r^{-(N-2)\cdot \frac{2N}{N+2}}r^{N-1}\ud r\Big)^\frac{N+2}{2N} \sim \lambda^{2+\frac{(N-6)N}{2(N+2)}\cdot \frac{N+2}{2N}} = \lambda^\frac{N+2}{4}.
  \end{aligned}
\end{equation}
\end{proof}

We now examine coercivity of the quadratic part in \eqref{eq:energy-taylor}.

\begin{lemma}
  \label{lem:coer-Lp-Lm}
  There exist constants $c, C > 0$ such that
  \begin{itemize}
    \item for any real-valued radial $g \in \cE$ there holds
      \begin{gather}
        \label{eq:coer-Lp-1}
        \la g, L^+g\ra \geq c\int_{\bR^N}|\grad g|^2 \ud x -C\big(\la W, g\ra^2 + \la \cY^{(2)}, g\ra^2\big), \\
        \label{eq:coer-Lm-1}
        \la g, L^-g\ra \geq c\int_{\bR^N}|\grad g|^2 \ud x -C\la \Lambda W, g\ra^2,
      \end{gather}
    \item if $r_1 > 0$ is large enough, then for any real-valued radial $g \in \cE$ there holds
      \begin{gather}
        \label{eq:coer-Lp-2}
        (1-2c)\int_{|x|\leq r_1}|\grad g|^2 \ud x + c\int_{|x|\geq r_1}|\grad g|^2\ud x + \int_{\bR^N}V^+|g|^2\ud x \geq -C\big(\la W, g\ra^2 + \la \cY^{(2)}, g\ra^2\big), \\
        \label{eq:coer-Lm-2}
        (1-2c)\int_{|x|\leq r_1}|\grad g|^2 \ud x + c\int_{|x|\geq r_1}|\grad g|^2\ud x + \int_{\bR^N}V^-|g|^2\ud x \geq -C\la \Lambda W, g\ra^2,
      \end{gather}
    \item if $r_2 > 0$ is small enough, then for any real-valued radial $g \in \cE$ there holds
      \begin{gather}
        \label{eq:coer-Lp-3}
        (1-2c)\int_{|x|\geq r_2}|\grad g|^2 \ud x + c\int_{|x|\leq r_2}|\grad g|^2\ud x + \int_{\bR^N}V^+|g|^2\ud x \geq -C\big(\la W, g\ra^2 + \la \cY^{(2)}, g\ra^2\big), \\
        \label{eq:coer-Lm-3}
        (1-2c)\int_{|x|\geq r_2}|\grad g|^2 \ud x + c\int_{|x|\leq r_2}|\grad g|^2\ud x + \int_{\bR^N}V^-|g|^2\ud x \geq -C\la \Lambda W, g\ra^2.
      \end{gather}
  \end{itemize}
\end{lemma}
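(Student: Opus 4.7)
The plan is to reduce all six inequalities to two classical spectral facts about the self-adjoint operators $L^{\pm}$ on radial $\dot H^1$: (i) $L^-$ is nonnegative with one-dimensional kernel $\spn(W)$; and (ii) $L^+$ has one-dimensional kernel $\spn(\Lambda W)$ together with exactly one simple strictly negative eigenvalue, whose radial eigenfunction I will denote $\cY_-$. Both are classical; see \cite{DM09}. The four localized estimates \eqref{eq:coer-Lp-2}--\eqref{eq:coer-Lm-3} will be deduced from the two global ones \eqref{eq:coer-Lp-1}--\eqref{eq:coer-Lm-1}, which themselves follow from (i)--(ii) by a standard orthogonality argument.

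I would argue the global estimates as follows. The general principle is that a self-adjoint operator with purely positive essential spectrum and a finite-dimensional non-positive subspace is coercive in the gradient norm on any subspace complementary to that bad subspace via a non-degenerate pairing. For $L^-$ the bad subspace is $\spn(W)$ and the pairing $\la W, \cdot \ra$ is nondegenerate since $\la W, W\ra > 0$, yielding \eqref{eq:coer-Lm-1}. For $L^+$ the bad subspace is $\spn(\Lambda W, \cY_-)$, and the two functionals $\la W, \cdot\ra, \la \cY^{(2)}, \cdot\ra$ provide a nondegenerate pairing provided the $2\times 2$ matrix
\[
\begin{pmatrix} \la W, \Lambda W\ra & \la W, \cY_-\ra \\ \la \cY^{(2)}, \Lambda W\ra & \la \cY^{(2)}, \cY_-\ra \end{pmatrix} = \begin{pmatrix} -\|W\|_{L^2}^2 & \la W, \cY_-\ra \\ 0 & \la \cY^{(2)}, \cY_-\ra \end{pmatrix}
\]
is invertible; here I used $\la W, \Lambda W\ra = -\|W\|_{L^2}^2 \neq 0$ (a Pohozaev-type integration by parts) and $\la \cY^{(2)}, \Lambda W\ra = 0$ (already shown in the paper). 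Invertibility is thus equivalent to $\la \cY^{(2)}, \cY_-\ra \neq 0$, which is the only genuinely non-trivial spectral input and is established in \cite[Sect.~7]{DM09}.

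To pass to the localized estimates, I would exploit the decay $|V^\pm(x)| \lesssim (1+|x|)^{-4}$. Combining H\"older's inequality with the Sobolev embedding $\dot H^1 \hookrightarrow L^{2N/(N-2)}$ yields, for any $r > 0$,
\[
\int_{|x|\geq r}|V^\pm||g|^2 \ud x \lesssim \|V^\pm\|_{L^{N/2}(|x|\geq r)}\|g\|_{\dot H^1}^2 \lesssim r^{-2}\|g\|_{\dot H^1}^2,
\]
and symmetrically $\int_{|x|\leq r}|V^\pm||g|^2 \ud x \lesssim r^{2}\|g\|_{\dot H^1}^2$. To prove \eqref{eq:coer-Lp-2}, I would split $\int V^+|g|^2 \ud x$ over $|x| \leq r_1$ and $|x| \geq r_1$, bound the outer piece by $\epsilon(r_1)\|g\|_{\dot H^1}^2$ with $\epsilon(r_1)\to 0$ as $r_1 \to \infty$, and then form a convex combination of the result with a small multiple of \eqref{eq:coer-Lp-1} arranged so that the coefficients of $\int_{|x|\leq r_1}|\grad g|^2 \ud x$ and $\int_{|x|\geq r_1}|\grad g|^2 \ud x$ come out to $1-2c$ and $c$ respectively. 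This forces $c$ to be small compared to the baseline coercivity constant of \eqref{eq:coer-Lp-1} and $r_1$ correspondingly large; for \eqref{eq:coer-Lp-3}--\eqref{eq:coer-Lm-3} the same scheme applies, with $r_2$ small replacing $r_1$ large and interior/exterior interchanged.

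The main obstacle is the non-algebraic spectral identity $\la \cY^{(2)}, \cY_-\ra \neq 0$, which is what makes orthogonality to $\cY^{(2)}$ effective as a substitute for orthogonality to the negative eigenfunction of $L^+$. Once this is granted, the rest is classical spectral theory combined with a routine but careful bookkeeping of constants in the localization step.
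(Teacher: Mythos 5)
Your reduction of \eqref{eq:coer-Lm-1} (where the non-positive subspace is just the kernel $\spn(W)$ and $\la \Lambda W, W\ra\neq 0$ rules it out) and your ``positivity on the constrained subspace plus compactness of $\int V^\pm g^2$ implies coercivity'' scheme are fine, but the core of the lemma is \eqref{eq:coer-Lp-1}, and there your argument has a genuine gap: the ``general principle'' you invoke is false. Nondegenerate pairing of the constraint functionals with the non-positive subspace (i.e.\ transversality) does not imply positivity of an indefinite form on the constrained subspace. Already in finite dimensions, take $Q=\mathrm{diag}(-1,0,1)$ on $\bR^3$, bad subspace $\spn(e_1,e_2)$, and functionals $\phi_1=\epsilon e_1^*-e_3^*$, $\phi_2=e_2^*$: the pairing matrix is $\mathrm{diag}(\epsilon,1)$, invertible, yet $\ker\phi_1\cap\ker\phi_2=\spn(e_1+\epsilon e_3)$ and $Q[e_1+\epsilon e_3]=-1+\epsilon^2<0$. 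So knowing $\la W,\Lambda W\ra\neq0$, $\la\cY^{(2)},\Lambda W\ra=0$ and $\la\cY^{(2)},\cY_-\ra\neq0$ is not enough. What actually makes orthogonality to $\cY^{(2)}$ effective is the identity $\cY^{(2)}=-\frac1\nu L^+\cY^{(1)}$ together with $\la\cY^{(1)},L^+\cY^{(1)}\ra=-\la L^-\cY^{(2)},\cY^{(2)}\ra<0$: if $\la\cY^{(2)},g\ra=0$ then $g$ is $L^+$-orthogonal to the negative direction $\cY^{(1)}$, so $\la g,L^+g\ra<0$ would produce a two-dimensional subspace on which $L^+$ is negative definite, contradicting Morse index one. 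This Nakanishi--Roy observation is exactly the paper's key step \eqref{eq:coer-Lp-11}, and it appears nowhere in your proposal. Relatedly, the fact you outsource, $\la\cY^{(2)},\cY_-\ra\neq0$, is not established in \cite{DM09} (Section 7 there only constructs $\cY^{(1)},\cY^{(2)}$ and \eqref{eq:Y1Y2}); it is true, but its natural proof is the same Morse-index trick, and even granted it does not by itself yield \eqref{eq:coer-Lp-1}.

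The localization step also does not close as described. Write $I_{\mathrm{in}},I_{\mathrm{out}}$ for the gradient integrals over $\{|x|\le r_1\}$, $\{|x|\ge r_1\}$ and $P_{\mathrm{in}},P_{\mathrm{out}}$ for the corresponding potential integrals. Your ingredients are (A) the global bound $I_{\mathrm{in}}+I_{\mathrm{out}}+P_{\mathrm{in}}+P_{\mathrm{out}}\ge c_1(I_{\mathrm{in}}+I_{\mathrm{out}})-C\Pi$ (with $\Pi$ the squared projections) and (B) $|P_{\mathrm{out}}|\le\epsilon(r_1)(I_{\mathrm{in}}+I_{\mathrm{out}})$. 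The interior potential $P_{\mathrm{in}}$ can be of size comparable to $I_{\mathrm{in}}$ with a non-small constant, so it can only be controlled through (A); but (A) spends the exterior gradient at full strength $(1-c_1)I_{\mathrm{out}}$, whereas the target \eqref{eq:coer-Lp-2} grants you only $cI_{\mathrm{out}}$. Any combination of (A) and (B) therefore leaves the coefficient of $I_{\mathrm{out}}$ at about $c+c_1-1<0$, and no choice of convex weights repairs this. One genuinely needs either a cutoff argument (apply \eqref{eq:coer-Lp-1} to $\chi g$ and control the commutator, tail-of-$W$ and tail-of-$V^\pm$ errors, e.g.\ via exterior Hardy for radial functions) or a separate compactness argument with $r_1\to\infty$; this is what the proof of Lemma~2.1 of \cite{moi15p-3}, to which the paper appeals, supplies. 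In short, both the key spectral step and the localization require ideas beyond what your sketch contains.
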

\begin{proof}
  In the proofs of \eqref{eq:coer-Lp-1} and \eqref{eq:coer-Lm-1} we repeat with minor modifications the arguments of Nakanishi and Roy \cite{NaRo15p}. We include them for the reader's convenience.

  Let us show that
  \begin{equation}
    \label{eq:coer-Lp-11}
    g \in \cE, \la \cY^{(2)}, g\ra = 0\quad\Rightarrow\quad \la g, L^+, g\ra \geq 0.
  \end{equation}
  Suppose the contrary. Let $(a, b) \in \bR^2 \setminus\{(0, 0)\}$ and consider $ag + b\cY^{(1)} \in \cE$.
  Since $\cY^{(2)} \neq W$, \eqref{eq:Y1Y2} yields
  \begin{equation}
    \label{eq:Y1Y2pos}
    \la \cY^{(1)}, L^+ \cY^{(1)}\ra = -\nu\la \cY^{(1)}, \cY^{(2)} \ra = -\la L^-\cY^{(2)}, \cY^{(2)}\ra < 0,
  \end{equation}
  so we obtain
  \begin{equation}
    \begin{aligned}
      \la ag + bY^{(1)}, L^+(ag + b\cY^{(1)})\ra &= a^2 \la g, L^+ g\ra + 2ab \la g, L^+\cY^{(1)}\ra + b^2 \la \cY^{(1)}, L^+\cY^{(1)}\ra \\
      &= a^2 \la g, L^+ g\ra - 2ab\nu \la g, \cY^{(2)}\ra + b^2 \la \cY^{(1)}, L^+\cY^{(1)}\ra < 0.
    \end{aligned}
  \end{equation}
  This is impossible, because $L^+$ has only one negative direction. This proves \eqref{eq:coer-Lp-11}.

  Suppose \eqref{eq:coer-Lp-1} fails. Then there exists a sequence $g_n \in \cE$ such that $\|g_n\|_\cE = 1$
  and
  \begin{equation}
    \label{eq:coer-Lp-12}
    \la g_n, L^+ g_n\ra \leq c_n - C_n\big(\la W, g_n\ra^2 + \la \cY^{(2)}, g\ra^2\big),\qquad c_n \to 0,\quad C_n \to +\infty.
  \end{equation}
  Upon extracting a subsequence, we can assume that $g_n \wto g \in \cE$. Since $|\la g_n, L^+ g_n\ra| \lesssim \|g_n\|_\cE^2 = 1$,
  from \eqref{eq:coer-Lp-12} we immediately get $\la W, g\ra = \la \cY^{(2)}, g\ra = 0$.
  Also, by standard arguments $\la g_n, V^+ g_n\ra \to \la g, V^+ g\ra$, hence by the Fatou property
  \begin{equation}
    \la g, L^+ g\ra \leq \liminf_n \la g_n, L^+, g_n\ra \leq \liminf_n c_n = 0.
  \end{equation}
  Thus $g$ is a minimizer for the quadratic form associated with $L^+$ on the hyperplane orthogonal to $\cY^{(2)}$.
  This implies that $\la h, L^+ g\ra = 0$ for all $h \in \cE$ such that $\la \cY^{(2)}, h\ra = 0$.
  But we also have $\la \cY^{(1)}, L^+ g\ra = \la L^+ \cY^{(1)}, g\ra = -\nu\la \cY^{(2)}, g\ra = 0$
  and $\la \cY^{(1)}, \cY^{(2)}\ra \neq 0$, see \eqref{eq:Y1Y2pos}, so we obtain $\la h, L^+ g\ra = 0$ for all $h \in \cE$.
  Hence $g = \Lambda W$. But $\la W, \Lambda W\ra = -\|W\|_{L^2}^2 \neq 0$, so we get a contradiction. This proves \eqref{eq:coer-Lp-1}.

  The proof of \eqref{eq:coer-Lm-1} is similar. We obtain that the weak limit $g$ is a minimizer
  for the quadratic form associated with $L^-$ (without constraints), hence $g = W$,
  which is incompatible with the orthogonality condition.

  Once we have \eqref{eq:coer-Lp-1} \eqref{eq:coer-Lm-1},
  the bounds \eqref{eq:coer-Lp-2}, \eqref{eq:coer-Lm-2}, \eqref{eq:coer-Lp-3} and \eqref{eq:coer-Lm-3}
  follow by repeating the~proof of Lemma 2.1 in \cite{moi15p-3}.
\end{proof}
We now use this lemma to study the linearization around $\eee^{i\theta}W_\lambda$ for a complex-valued perturbation~$g$.
\begin{proposition}
  \label{prop:coer-L}
  There exist constants $c, C > 0$ such that for any $\theta \in \bR$ and $\lambda > 0$
  \begin{itemize}
    \item for any complex-valued radial $g \in \cE$ there holds
      \begin{equation}
        \label{eq:coer-L-1}
        \begin{gathered}
        \int_{\bR^N}|\grad g|^2 \ud x - \Re \int_{\bR^N}\conj g\cdot f'(\eee^{i\theta}W_\lambda)g\ud x \geq  \\
        \geq c\int_{\bR^N}|\grad g|^2 \ud x -C\big(\la \lambda^{-2}\eee^{i\theta}W_\lambda, g\ra^2 + \la \lambda^{-2}i\eee^{i\theta}\Lambda W_\lambda, g\ra^2 + \la \alpha_{\theta, \lambda}^+, g\ra^2 + \la \alpha_{\theta, \lambda}^-, g\ra^2\big),
      \end{gathered}
      \end{equation}
    \item if $r_1 > 0$ is large enough, then for any complex-valued radial $g \in \cE$ there holds
      \begin{equation}
        \label{eq:coer-L-2}
        \begin{gathered}
        (1-2c)\int_{|x|\leq r_1}|\grad g|^2 \ud x + c\int_{|x|\geq r_1}|\grad g|^2\ud x - \Re \int_{\bR^N}\conj g\cdot f'(\eee^{i\theta}W_\lambda)g\ud x \geq \\
        \geq {-}C\big(\la \lambda^{-2}\eee^{i\theta}W_\lambda, g\ra^2 + \la \lambda^{-2}i\eee^{i\theta}\Lambda W_\lambda, g\ra^2 + \la \alpha_{\theta, \lambda}^+, g\ra^2 + \la \alpha_{\theta, \lambda}^-, g\ra^2\big),
      \end{gathered}
      \end{equation}
    \item if $r_2 > 0$ is small enough, then for any complex-valued radial $g \in \cE$ there holds
      \begin{equation}
        \label{eq:coer-L-3}
        \begin{gathered}
          (1-2c)\int_{|x|\geq r_2}|\grad g|^2 \ud x + c\int_{|x|\leq r_2}|\grad g|^2\ud x - \Re \int_{\bR^N}\conj g\cdot f'(\eee^{i\theta}W_\lambda)g\ud x \geq \\
        \geq {-}C\big(\la \lambda^{-2}\eee^{i\theta}W_\lambda, g\ra^2 + \la \lambda^{-2}i\eee^{i\theta}\Lambda W_\lambda, g\ra^2 + \la \alpha_{\theta, \lambda}^+, g\ra^2 + \la \alpha_{\theta, \lambda}^-, g\ra^2\big),
      \end{gathered}
      \end{equation}
  \end{itemize}
\end{proposition}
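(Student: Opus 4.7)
The plan is to reduce to the reference case $\theta = 0$, $\lambda = 1$ by rotation and scaling, then split into real and imaginary parts to apply the already-established real-valued Lemma~\ref{lem:coer-Lp-Lm}. Concretely, write $g = \eee^{i\theta} h_\lambda$ with $h = h_1 + i h_2$ ($h_1, h_2$ real). A short computation using the formula for $f'$ and the change of variable $y = x/\lambda$ shows
\begin{equation*}
\int_{\bR^N}|\grad g|^2\ud x = \int_{\bR^N}|\grad h|^2\ud y, \qquad \Re\int_{\bR^N}\conj{g}\cdot f'(\eee^{i\theta}W_\lambda)g\ud x = \Re\int_{\bR^N}\conj{h}\cdot f'(W)h\ud y,
\end{equation*}
so the quadratic form is scale- and phase-invariant. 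The cutoff regions $|x|\leq r_1$ transform into $|y| \leq r_1/\lambda$ (and similarly for $r_2$), which is exactly where the ``large enough'' / ``small enough'' hypothesis in Lemma~\ref{lem:coer-Lp-Lm} is consumed. Moreover, by the formulas below \eqref{eq:alpha}, the four inner products on the right-hand side become
$\la\lambda^{-2}\eee^{i\theta}W_\lambda, g\ra = \la W, h_1\ra$, $\la\lambda^{-2}i\eee^{i\theta}\Lambda W_\lambda, g\ra = \la \Lambda W, h_2\ra$, and $\la\alpha^{\pm}_{\theta,\lambda}, g\ra = \la\cY^{(2)}, h_1\ra \pm \la\cY^{(1)}, h_2\ra$.

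Next, I would decouple the quadratic form into its $L^\pm$ pieces. Using the explicit formula $f'(W)h = W^{4/(N-2)}(\frac{N+2}{N-2}h_1 + i h_2)$, one checks pointwise that $\Re(\conj h\cdot f'(W)h) = -V^+ h_1^2 - V^- h_2^2$, hence
\begin{equation*}
\int_{\bR^N}|\grad h|^2\ud y - \Re\int_{\bR^N}\conj{h}\cdot f'(W)h\ud y = \la h_1, L^+ h_1\ra + \la h_2, L^- h_2\ra.
\end{equation*}
Because $|\grad h|^2 = |\grad h_1|^2 + |\grad h_2|^2$ pointwise and the potential is real, this decoupling survives the localization with cutoff at $|y| \leq r_1$ or $|y|\leq r_2$, so it also yields the versions needed for \eqref{eq:coer-L-2} and \eqref{eq:coer-L-3}.

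Finally, I would apply \eqref{eq:coer-Lp-1} and \eqref{eq:coer-Lm-1} separately to $h_1$ and $h_2$ and add them to obtain \eqref{eq:coer-L-1}; the localized versions \eqref{eq:coer-L-2} and \eqref{eq:coer-L-3} follow in the same way from \eqref{eq:coer-Lp-2}--\eqref{eq:coer-Lm-2} and \eqref{eq:coer-Lp-3}--\eqref{eq:coer-Lm-3}. The error terms produced by Lemma~\ref{lem:coer-Lp-Lm} are $\la W, h_1\ra^2$, $\la \Lambda W, h_2\ra^2$, $\la\cY^{(2)}, h_1\ra^2$ and $\la\cY^{(1)}, h_2\ra^2$; the first two are exactly the $W_\lambda$ and $\Lambda W_\lambda$ inner products in the statement, while the last two are controlled by $\la\alpha^+_{\theta,\lambda}, g\ra^2 + \la\alpha^-_{\theta,\lambda}, g\ra^2$ via the polarization identity $(a+b)^2 + (a-b)^2 = 2(a^2+b^2)$.

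I do not anticipate a genuine obstacle here: the proof is essentially a bookkeeping argument, with all of the real analytic content already packaged in Lemma~\ref{lem:coer-Lp-Lm}. The only point requiring minor care is the scaling of the cutoff radii under $g = \eee^{i\theta}h_\lambda$, since the thresholds for ``large enough $r_1$'' and ``small enough $r_2$'' in Lemma~\ref{lem:coer-Lp-Lm} must be interpreted after rescaling by $\lambda$; this is a routine adjustment and creates no structural difficulty.
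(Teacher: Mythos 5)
Your argument is correct and follows essentially the same route as the paper: reduce to $\theta=0$, $\lambda=1$ by phase and scaling invariance (with the cutoff radii rescaling accordingly, which is exactly how the statement is meant and later used with $r_1 = R\lambda$), split $h$ into real and imaginary parts so the quadratic form becomes $\la h_1, L^+h_1\ra + \la h_2, L^-h_2\ra$, apply Lemma~\ref{lem:coer-Lp-Lm} to each part, and absorb $\la \cY^{(2)}, h_1\ra^2$ into $\la\alpha^+_{\theta,\lambda},g\ra^2+\la\alpha^-_{\theta,\lambda},g\ra^2$ via $\cY^{(2)}=\frac12(\alpha^+_{0,1}+\alpha^-_{0,1})$. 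The only slip is cosmetic: the term $\la\cY^{(1)},h_2\ra^2$ is not actually produced by \eqref{eq:coer-Lm-1}, but since you control it anyway this does not affect the proof.
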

\begin{remark}
  Note that the scalar products on the right hand side of these estimates are the ones which appear in the orthogonality conditions in the previous section. For the definition of $\alpha_{\theta, \lambda}^\pm$, see \eqref{eq:alpha}.
\end{remark}
\begin{proof}
  Without loss of generality we can assume that $\theta = 0$ and $\lambda = 1$.
  Let $g = g_1 + i g_2$. Observe that
  \begin{equation}
    -f'(W)(g_1 + i g_2) = -W^\frac{4}{N-2}(g_1 + i g_2) - \frac{4}{N-2}W^\frac{4}{N-2}g_1 = V^+ g_1 + iV^- g_2,
  \end{equation}
  which gives
  \begin{equation}
    -\Re\int_{\bR^N}\conj g\cdot f'(W)g\ud x = \int_{\bR^N} V^+ g_1^2 \ud x + \int_{\bR^N} V^- g_2^2\ud x.
  \end{equation}
  Also, $\la W, g\ra = \la W, g_1\ra$ and $\la i\Lambda W, g\ra = \la \Lambda W, g_2\ra$.
  We have $\cY^{(2)} = \frac 12(\alpha_{0, 1}^+ + \alpha_{0, 1}^-)$, so
  \begin{equation}
    \la \cY^{(2)}, g_1\ra^2 = \la \cY^{(2)}, g\ra^2 \leq \frac 12 \big(\la \alpha_{0, 1}^+, g\ra^2 + \la \alpha_{0, 1}^-, g\ra^2\big).
  \end{equation}
  Applying \eqref{eq:coer-Lp-1} with $g = g_1$ and \eqref{eq:coer-Lm-1} with $g = g_2$ we obtain \eqref{eq:coer-L-1}.
  The proofs of \eqref{eq:coer-L-2} and \eqref{eq:coer-L-3} are similar.
\end{proof}
One consequence of the last proposition is the coercivity near a sum of two bubbles at different scales:
\begin{lemma}
  \label{lem:coer-L-two}
  There exist $\eta, C > 0$ such that if $\lambda \leq \eta\mu$, then for all $g \in \cE$ satisfying \eqref{eq:orth}
  there holds
  \begin{equation}
    \label{eq:coer-L-two}
    \frac 1C \|g\|_\cE^2 \leq \frac 12 \la \vD^2 E(\eee^{i\zeta}W_\mu + \eee^{i\theta}W_\lambda)g, g\ra + 2\big((a_1^+)^2 + (a_1^-)^2 + (a_2^+)^2 + (a_2^-)^2\big) \leq C\|g\|_\cE^2
  \end{equation}
\end{lemma}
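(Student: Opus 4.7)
The plan has two halves. Writing $u_0 := \eee^{i\zeta}W_\mu + \eee^{i\theta}W_\lambda$, we have
\begin{equation*}
  \la \vD^2 E(u_0) g, g\ra = \int_{\bR^N} |\grad g|^2 \ud x - \Re \int_{\bR^N} \conj g \cdot f'(u_0) g \ud x.
\end{equation*}
The upper bound in \eqref{eq:coer-L-two} is routine: $|f'(u_0)| \lesssim W_\mu^{4/(N-2)} + W_\lambda^{4/(N-2)}$ has $L^{N/2}$ norm bounded independently of $\mu, \lambda$ by scaling, so the nonlinear term is controlled by $\|g\|_\cE^2$ via H\"older and the Sobolev embedding $\cE \hookrightarrow L^{2N/(N-2)}$. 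Likewise, $\alpha_{\zeta,\mu}^\pm$ and $\alpha_{\theta,\lambda}^\pm$ are rescaled Schwartz functions with scale-invariant $L^{2N/(N+2)}$ norm, so each $(a_j^\pm)^2 \lesssim \|g\|_\cE^2$.

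For the lower bound, I would apply Proposition~\ref{prop:coer-L} to each bubble separately: \eqref{eq:coer-L-2} to $(\zeta, \mu)$ with large cutoff $r_1 \mu$, and \eqref{eq:coer-L-3} to $(\theta, \lambda)$ with small cutoff $r_2 \lambda$. The orthogonality conditions \eqref{eq:orth} annihilate the $W$- and $\Lambda W$-corrections on the right-hand sides, leaving only $(a_j^+)^2 + (a_j^-)^2$. Choose $\eta$ small enough that $r_2 \lambda < r_1 \mu$; the regions $|x| \leq r_2\lambda$, $r_2\lambda \leq |x| \leq r_1\mu$, and $|x| \geq r_1\mu$ then partition $\bR^N$, with summed gradient coefficients $(1-2c)+c = 1-c$, $2(1-2c)$, and $c+(1-2c) = 1-c$ respectively. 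Dropping the positive excess in the middle annulus yields
\begin{equation*}
  (1-c)\int_{\bR^N}|\grad g|^2 \ud x - \Re \int_{\bR^N}\conj g \bigl(f'(\eee^{i\zeta}W_\mu) + f'(\eee^{i\theta}W_\lambda)\bigr) g \ud x \geq -C \sum_{j=1,2}\bigl((a_j^+)^2 + (a_j^-)^2\bigr).
\end{equation*}

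Finally I would replace $f'(\eee^{i\zeta}W_\mu) + f'(\eee^{i\theta}W_\lambda)$ by $f'(u_0)$ at the cost of an error term $R := f'(u_0) - f'(\eee^{i\zeta}W_\mu) - f'(\eee^{i\theta}W_\lambda)$. Two applications of \eqref{eq:pointwise-5} (swapping the roles of the summands) give the pointwise bound $|R| \lesssim \min(W_\mu, W_\lambda)^{4/(N-2)}$; splitting $\int \min(W_\mu, W_\lambda)^{2N/(N-2)}\ud x$ at the crossing radius $\sqrt{\lambda\mu}$ and using the $|y|^{-(N-2)}$ tail of $W$, each piece evaluates to $O((\lambda/\mu)^{N/2})$, hence $\|R\|_{L^{N/2}} \lesssim \lambda/\mu \leq \eta$ and consequently $|\Re\int\conj g R g| \lesssim \eta \|g\|_\cE^2$ by H\"older--Sobolev. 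Writing $\la \vD^2 E(u_0)g, g\ra$ as the sum of $c\int|\grad g|^2$, the parenthesized quantity on the left-hand side of the previous display, and $-\Re\int\conj g R g$, the combined estimate produces $\la \vD^2 E(u_0)g, g\ra \geq \tfrac{c}{2}\|g\|_\cE^2 - C\sum(a_j^\pm)^2$ for $\eta$ sufficiently small, which yields \eqref{eq:coer-L-two} after rearrangement (with the $2\sum(a_j^\pm)^2$ term absorbed using the already-established bound $\sum(a_j^\pm)^2 \lesssim \|g\|_\cE^2$ if needed, upon enlarging $C$). The main obstacle is the control of the cross-interaction $R$: the quantitative smallness $\|R\|_{L^{N/2}} = O(\lambda/\mu)$ is precisely what allows the error to be absorbed into the residual gradient weight $1-c$, and everything else is bookkeeping.
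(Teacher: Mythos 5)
Your overall route --- localized single-bubble coercivity from Proposition~\ref{prop:coer-L}, the orthogonality conditions \eqref{eq:orth} killing the $W$ and $\Lambda W$ terms on the right-hand sides, and the cross-term estimate $\|R\|_{L^{N/2}}\lesssim \lambda/\mu$ for $R:=f'(\eee^{i\zeta}W_\mu+\eee^{i\theta}W_\lambda)-f'(\eee^{i\zeta}W_\mu)-f'(\eee^{i\theta}W_\lambda)$ via \eqref{eq:pointwise-5} --- is the right one (the paper itself only refers to \cite{moi15p-3} for this lemma), and your upper bound and your error estimate are correct. The gap is in the key splitting step: you attach the localized estimates to the wrong bubbles. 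With your assignment (interior-weighted \eqref{eq:coer-L-2} at scale $\mu$, exterior-weighted \eqref{eq:coer-L-3} at scale $\lambda$), on the annulus $r_2\lambda\le|x|\le r_1\mu$ both estimates carry the gradient with weight $1-2c$, so the summed weight is $2(1-2c)>1$; and this annulus contains both $|x|\sim\lambda$ and $|x|\sim\mu$, i.e.\ exactly where the two potentials are significant, so the double counting is genuine and not mere bookkeeping. ``Dropping the positive excess'' is not a legal move: the excess sits on the left-hand side of a lower bound, and from $A+B\ge -C\sum(a_j^\pm)^2$ with $B\ge 0$ you cannot conclude $A\ge -C\sum(a_j^\pm)^2$. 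A symptom that something is off is that your separation condition $r_2\lambda<r_1\mu$ holds automatically and never uses $\lambda\le\eta\mu$. The correct assignment is the opposite one: apply \eqref{eq:coer-L-2}, rescaled, to the \emph{small} bubble (full weight on $|x|\le r_1\lambda$, weight $c$ outside) and \eqref{eq:coer-L-3}, rescaled, to the \emph{large} bubble (full weight on $|x|\ge r_2\mu$, weight $c$ inside). Then, provided $r_1\lambda\le r_2\mu$ --- i.e.\ $\lambda\le\eta\mu$ with $\eta=r_2/r_1$, which is precisely where the hypothesis of the lemma enters --- the summed gradient weights are $1-c$, $2c$, $1-c$ on the three regions, all at most $1-c$, and the remainder of your argument (orthogonality, the $O(\lambda/\mu)$ bound on $R$, absorption of the errors into the leftover $c\int|\grad g|^2$) goes through as written.

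A secondary point: your final inequality has the form $\la \vD^2 E(\eee^{i\zeta}W_\mu+\eee^{i\theta}W_\lambda)g,g\ra\ge \tfrac{c}{2}\|g\|_\cE^2-C\sum_{j,\pm}(a_j^\pm)^2$ with an unspecified, possibly large $C$, and ``rearrangement'' then yields \eqref{eq:coer-L-two} only with the coefficient $2$ replaced by a constant of size $C/2$; since $2-\tfrac{C}{2}$ may be negative, the explicit factor $2$ in the statement does not follow from your estimate alone and needs the more quantitative accounting of the cited proof. For the way the lemma is used in Proposition~\ref{prop:coercivity} any fixed constant would suffice, but the lemma as stated claims $2$, so you should either track the constants or note that you prove the statement with $2$ replaced by some absolute constant.
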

\begin{proof}
  It is essantially the same as the proof of \cite[Lemma 3.5]{moi15p-3}.
\end{proof}
\begin{proof}[Proof of Proposition~\ref{prop:coercivity}]
  Bound \eqref{eq:coer-bound} follows immediately from \eqref{eq:energy-taylor}, Lemmas~\ref{lem:coer-sans-g}, \ref{lem:energy-linear},
  \ref{lem:coer-L-two} and the triangle inequality.

  For any $c > 0$ we have $\|g\|_\cE^\frac{2N}{N-2} \leq c\|g\|_\cE^2$ if $\eta$ is chosen small enough,
  hence \eqref{eq:energy-taylor} and Lemmas~\ref{lem:coer-sans-g}, \ref{lem:energy-linear} yield
  \begin{equation}
    \begin{aligned}
    &\Big|E(u) - 2E(W) - \frac 1N\big(N(N-2)\big)^\frac N2\theta\lambda^\frac{N-2}{2} - \frac 12\la \vD^2 E(\eee^{i\zeta}W_\mu + \eee^{i\theta}W_\lambda)g, g\ra\Big| \\
    &\leq C\big(\big|\zeta + \frac{\pi}{2}\big| + |\mu - 1| + |\theta|^3 + \lambda\big)\lambda^\frac{N-2}{2} + c\|g\|_\cE^2,
  \end{aligned}
  \end{equation}
  hence
  \begin{equation}
    \begin{aligned}
    &\frac 1N\big(N(N-2)\big)^\frac N2\theta\lambda^\frac{N-2}{2} + \frac 12\la \vD^2 E(\eee^{i\zeta}W_\mu + \eee^{i\theta}W_\lambda)g, g\ra \\
    &\leq E(u) - 2E(W) + C\big(\big|\zeta + \frac{\pi}{2}\big| + |\mu - 1| + |\theta|^3 + \lambda\big)\lambda^\frac{N-2}{2} + c\|g\|_\cE^2.
  \end{aligned}
  \end{equation}
  Choosing $c$ small enough and invoking Lemma~\ref{lem:coer-L-two} finishes the proof of \eqref{eq:coer-conclusion}.
\end{proof}
\section{Modulation}
\label{sec:mod}
\subsection{Bounds on the modulation parameters}
We study solutions of the following form:
\begin{equation}
  \label{eq:decompose}
  u(t) = \eee^{i\zeta(t)}W_{\mu(t)} + \eee^{i\theta(t)}W_{\lambda(t)} + g(t),
\end{equation}
with
\begin{equation}
  \label{eq:param-rough}
  |\mu(t) - 1| \ll 1,\quad \big|\zeta(t)+\frac{\pi}{2}\big| \ll 1,\quad \lambda(t) \ll 1,\quad |\theta(t)| \ll 1\quad\text{and}\quad \|g\|_\cE \ll 1.
\end{equation}
We will often omit the time variable and write $\zeta$ for $\zeta(t)$ etc.

Differentiating \eqref{eq:decompose} in time we obtain
  \begin{equation}
    \label{eq:dtu}
    \partial_t u = \zeta'i\eee^{i\zeta}W_\mu - \frac{\mu'}{\mu}\eee^{i\zeta}\Lambda W_\mu + \theta'i\eee^{i\theta}W_\lambda - \frac{\lambda'}{\lambda}\Lambda W_\lambda + \partial_t g.
  \end{equation}
  On the other hand, using $\Delta(W_\mu) + f(W_\mu) = \Delta(W_\lambda) + f(W_\lambda) = 0$ we get
  \begin{equation}
    \label{eq:rhsu}
    i\Delta u + if(u) = i\Delta g + i\big(f(\eee^{i\zeta}W_\mu + \eee^{i\theta}W_\lambda + g) - f(\eee^{i\zeta}W_\mu) - f(\eee^{i\theta}W_\lambda)\big),
  \end{equation}
  hence \eqref{eq:nls} yields
  \begin{equation}
    \label{eq:dtg}
    \begin{aligned}
    \partial_t g &= i\Delta g + i\big(f(\eee^{i\zeta}W_{\mu} + \eee^{i\theta}W_{\lambda} + g) - f(\eee^{i\zeta}W_\mu) - f(\eee^{i\theta}W_\lambda)\big)  \\
    &-\zeta' i\eee^{i\zeta}W_\mu + \frac{\mu'}{\mu}\eee^{i\zeta}\Lambda W_\mu - \theta' i\eee^{i\theta}W_\lambda + \frac{\lambda'}{\lambda}\eee^{i\theta}\Lambda W_\lambda.
    \end{aligned}
  \end{equation}
  Since we work with non-classical solutions, it is worth pointing out
  that the equation above should be understood as a notational simplification.
  Any computation involving $g(t)$ could be rewritten in terms of $u(t)$
  and the modulation parameters $\zeta$, $\mu$, $\theta$, $\lambda$.
  Most of the time we only use the fact that \eqref{eq:dtg} holds in the weak sense,
  but later we will also need to compute the time derivative of a quadratic form in $g(t)$,
  in which case the rigourous meaning of the computation is less clear.

  We impose the orthogonality conditions \eqref{eq:orth}. By standards arguments using
  the Implicit Function theorem, they uniquely determine the modulation parameters.

We need precise bootstrap assumptions about the parameters quantifying \eqref{eq:param-rough}.
In order to formulate them, denote
\begin{equation}
  \label{eq:kappa}
  \kappa := \Big(\frac{N-6}{N\cdot B\big(\frac{N-4}{2}, \frac{N}{2}\big)}\Big)^{\frac{2}{N-4}}.
\end{equation}
\begin{lemma}
  \label{lem:mod}
  Let $c > 0$ be an arbitrarily small constant. Let $T_0 < 0$ with $|T_0|$ large enough (depending on $c$)
  and $T < T_1 \leq T_0$. Suppose that for $T \leq t \leq T_1$ there holds
  \begin{align}
  \big|\zeta(t) + \frac{\pi}{2}\big| &\leq |t|^{-\frac{3}{N-6}}, \label{eq:bootstrap-zeta} \\
  |\mu(t) - 1| &\leq |t|^{-\frac{3}{N-6}}, \label{eq:bootstrap-mu} \\
  |\theta(t)| &\leq |t|^{-\frac{1}{N-6}}, \label{eq:bootstrap-theta} \\
  \big|\lambda(t) - \frac{1}{\kappa}(\kappa|t|)^{-\frac{2}{N-6}}\big| &\leq |t|^{-\frac{5}{2(N-6)}}, \label{eq:bootstrap-lambda} \\
  \|g\|_\cE &\leq |t|^{-\frac{N-1}{2(N-6)}}. \label{eq:bootstrap-g}
\end{align}
  Then
  \begin{align}
    \label{eq:mod-zeta}
    |\zeta'(t)| &\leq c|t|^{-\frac{N-3}{N-6}}, \\
    \label{eq:mod-mu}
    |\mu'(t)| &\leq c|t|^{-\frac{N-3}{N-6}}, \\
    \label{eq:mod-l}
    \Big|\lambda'(t) - \frac{2\kappa^\frac{N-4}{2}}{N-6}\lambda(t)^\frac{N-4}{2}\Big| &\leq c|t|^{-\frac{2N-7}{2(N-6)}}, \\
    \label{eq:mod-th}
    \Big|\theta'(t)+\frac{(N-2)\kappa^\frac{N-4}{2}}{N-6}\theta(t)\lambda(t)^\frac{N-6}{2}-\frac{K(t)}{\lambda(t)^2\|W\|_{L^2}^2}\Big| &\leq c|t|^{-\frac{N-5}{N-6}},
  \end{align}
  for $T \leq t \leq T_1$, where
  \begin{equation}
    \label{eq:K-def}
    K := -\big\la \eee^{i\theta}\Lambda W_\lambda, f(\eee^{i\zeta}W_{\mu} + \eee^{i\theta}W_{\lambda} + g) - f(\eee^{i\zeta}W_\mu + \eee^{i\theta}W_\lambda) - f'(\eee^{i\zeta}W_\mu + \eee^{i\theta}W_\lambda)g \big\ra.
\end{equation}
\end{lemma}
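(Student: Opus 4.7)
The plan is to turn the four orthogonality identities \eqref{eq:orth} into a $4\times 4$ linear system for the modulation velocities $(\zeta', \mu'/\mu, \theta', \lambda'/\lambda)$. Differentiating each identity in $t$ gives $\la \partial_t\varphi, g\ra + \la \varphi, \partial_t g\ra = 0$ for $\varphi$ ranging over the four vectors in \eqref{eq:orth}, and substituting \eqref{eq:dtg} into the second pairing decomposes each equation into three kinds of contributions: (i) a matrix acting on the modulation velocities coming from the last four terms of \eqref{eq:dtg}, (ii) a linear-in-$g$ piece from $i\Delta g + if'(\eee^{i\theta}W_\lambda)g$, and (iii) the full nonlinear interaction $\la \varphi, i[f(\eee^{i\zeta}W_\mu + \eee^{i\theta}W_\lambda + g) - f(\eee^{i\zeta}W_\mu) - f(\eee^{i\theta}W_\lambda)]\ra$.

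I would first analyze the matrix in (i). A direct computation of pairings such as $\la -\eee^{i\theta}W_\lambda, \eee^{i\theta}\Lambda W_\lambda\ra = \lambda^2\|W\|_{L^2}^2$ and analogues shows that to leading order it is block diagonal, with a $\mu$-block of size $\|W\|_{L^2}^2$ and $\|\Lambda W\|_{L^2}^2$ and a $\lambda$-block of size $\lambda^2\|W\|_{L^2}^2$ and $\lambda^2\|\Lambda W\|_{L^2}^2$, while the off-diagonal entries are cross-bubble scalar products of order $\lambda^{(N-2)/2}$, small enough to invert by Neumann series under \eqref{eq:bootstrap-lambda}. In particular, the equation coming from $\la -\eee^{i\theta}W_\lambda, g\ra = 0$ determines $\lambda'/\lambda$ and the equation from $\la i\eee^{i\theta}\Lambda W_\lambda, g\ra = 0$ determines $\theta'$. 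The contribution (ii) vanishes identically at leading order thanks to $\eee^{i\theta}W_\lambda, i\eee^{i\theta}\Lambda W_\lambda \in \ker Z_{\theta,\lambda}^*$ (see \eqref{eq:conj-ker-1}--\eqref{eq:conj-ker-2}) and the analogue for the $\mu$-bubble; what remains is a cross-bubble residue $\la \varphi, i[f'(\eee^{i\zeta}W_\mu + \eee^{i\theta}W_\lambda) - f'(\eee^{i\theta}W_\lambda)]g\ra$, estimated via \eqref{eq:pointwise-5} and Hölder against $\|g\|_\cE$.

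The technical heart is the computation of (iii). Splitting $\bR^N$ into $|x|\leq \sqrt\lambda$ and $|x|\geq \sqrt\lambda$ and controlling the outer region via \eqref{eq:pointwise-2}, the dominant contribution comes from linearizing $f(\eee^{i\zeta}W_\mu + \eee^{i\theta}W_\lambda) \approx f(\eee^{i\theta}W_\lambda) + f'(\eee^{i\theta}W_\lambda)(\eee^{i\zeta}W_\mu)$ and using that $W_\mu \approx 1$ on $\{|x|\leq \sqrt\lambda\}$. For $\varphi = -\eee^{i\theta}W_\lambda$ the integral collapses to $\Im(\eee^{i(\zeta-\theta)})\int W_\lambda^{(N+2)/(N-2)}$ with the angular factor $\Im(\eee^{i(\zeta-\theta)})\approx -1$; dividing by $\lambda^2\|W\|_{L^2}^2$ and invoking \eqref{eq:explicit-1}--\eqref{eq:explicit-2} together with the definition \eqref{eq:kappa} of $\kappa$ produces exactly $\frac{2\kappa^{(N-4)/2}}{N-6}\lambda^{(N-4)/2}$, yielding \eqref{eq:mod-l}. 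For $\varphi = i\eee^{i\theta}\Lambda W_\lambda$ the integral collapses to $\frac{N+2}{N-2}\Re(\eee^{i(\zeta-\theta)})\int \Lambda W_\lambda W_\lambda^{4/(N-2)}$, and now $\Re(\eee^{i(\zeta-\theta)})\approx -\theta$ by \eqref{eq:real-part}; combined with \eqref{eq:explicit-3} and the same $\kappa$-identity this delivers the coefficient $\frac{(N-2)\kappa^{(N-4)/2}}{N-6}$ of $\theta\lambda^{(N-6)/2}$ in \eqref{eq:mod-th}, while the quadratic-in-$g$ Taylor remainder of $f$ reappears as exactly $K/(\lambda^2\|W\|_{L^2}^2)$. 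The bounds \eqref{eq:mod-zeta} and \eqref{eq:mod-mu} are easier: the $\mu$-side interaction integrals carry an extra factor of $\lambda^{(N-2)/2}$ relative to the $\lambda$-side ones and fit comfortably inside the looser target $c|t|^{-(N-3)/(N-6)}$.

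The main obstacle is the borderline estimate \eqref{eq:mod-th}: both explicit terms inside the absolute value are already of size $|t|^{-(N-5)/(N-6)}$, exactly matching the allowed error, so every subleading contribution must be bounded quantitatively by a small power coming from \eqref{eq:bootstrap-lambda}--\eqref{eq:bootstrap-g} rather than absorbed into an $O$-symbol, and the explicit integrals \eqref{eq:explicit-1}--\eqref{eq:explicit-3} together with \eqref{eq:real-part} must be matched against the definition \eqref{eq:kappa} with no slack in the constants.
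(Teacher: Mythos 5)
Your proposal follows essentially the same route as the paper's proof: differentiating the four orthogonality conditions to obtain a $4\times 4$ system for the modulation velocities, using the adjoint-kernel identities \eqref{eq:conj-ker-1}--\eqref{eq:conj-ker-2} to eliminate the linear-in-$g$ terms, estimating the remaining cross-bubble residue with \eqref{eq:pointwise-5} and H\"older, extracting the leading interaction integrals via \eqref{eq:explicit-1}--\eqref{eq:explicit-3}, \eqref{eq:real-part} and the definition \eqref{eq:kappa}, keeping $K$ intact in the $\theta'$ equation, and inverting the nearly diagonal matrix with the needed quantitative gain on the $\lambda$-rows. The one slip is cosmetic: the nonzero diagonal pairings are all of the form $\la W_\lambda,\Lambda W_\lambda\ra=-\lambda^2\|W\|_{L^2}^2$ (and its $\mu$-analogue), so $\|\Lambda W\|_{L^2}^2$ never enters, but you use the correct normalization $\lambda^2\|W\|_{L^2}^2$ where it matters.
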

\begin{remark}
  \label{rem:bootstrap-lambda}
  We will not really use \eqref{eq:bootstrap-lambda}, but only the fact that $\lambda(t) \sim |t|^{-\frac{2}{N-6}}$.
\end{remark}
\begin{proof}
  We use the usual method of differentiating the orthogonality conditions in time, which will yield a linear system of the form:
  \begin{equation}
    \label{eq:mod-system}
    \begin{pmatrix}
      M_{11} & M_{12} & M_{13} & M_{14} \\ M_{21} & M_{22} & M_{23} & M_{24} \\ M_{31} & M_{32} & M_{33} & M_{34} \\ M_{41} & M_{42} & M_{43} & M_{44}
    \end{pmatrix} \begin{pmatrix}\mu^2 \zeta' \\ \mu \mu' \\ \lambda^2 \theta' \\ \lambda\lambda'\end{pmatrix} = \begin{pmatrix}B_1 \\ B_2 \\ B_3 \\ B_4 \end{pmatrix}.
  \end{equation}
  Here, the coefficients $M_{ij}$ and $B_i$ depend on $g$, $\zeta$, $\mu$, $\theta$ and $\lambda$. We will now compute all these coefficients and prove appropriate bounds.

  \textbf{First row.}
  Differentiating $\la i\eee^{i\zeta}\Lambda W_\mu, g\ra = 0$ and using \eqref{eq:dtg} we obtain
  \begin{equation}
    \begin{aligned}
      0 &= \dd t \la i\eee^{i\zeta}\Lambda W_\mu, g\ra = -\zeta'\la \eee^{i\zeta}\Lambda W_\mu, g\ra - \frac{\mu'}{\mu}\la i\eee^{i\zeta}\Lambda\Lambda W_\mu, g\ra + \la i\eee^{i\zeta}\Lambda W_\mu, \partial_t g\ra \\
      &= \zeta'\big({-}\la i\eee^{i\zeta}\Lambda W_\mu, i\eee^{i\zeta} W_\mu\ra - \la \eee^{i\zeta}\Lambda W_\mu, g\ra\big) + \frac{\mu'}{\mu}\big(\la i\eee^{i\zeta}\Lambda W_\mu, \eee^{i\zeta}\Lambda W_\mu\ra - \la i\eee^{i\zeta}\Lambda\Lambda W_\mu, g\ra\big) \\
      &+ \theta'\la i\eee^{i\zeta}\Lambda W_\mu, -i\eee^{i\theta} W_\lambda\ra + \frac{\lambda'}{\lambda}\la i\eee^{i\zeta}\Lambda W_\mu, \eee^{i\theta}\Lambda W_\lambda\ra \\
      &+ \big\la i\eee^{i\zeta}\Lambda W_\mu, i\Delta g + i\big(f(\eee^{i\zeta}W_{\mu} + \eee^{i\theta}W_{\lambda} + g) - f(\eee^{i\zeta}W_\mu) - f(\eee^{i\theta}W_\lambda)\big) \big\ra.
  \end{aligned}
  \end{equation}
  Note that $\la -\Lambda W_\mu, W_\mu\ra = \|W_\mu\|_{L^2}^2 = \mu^2 \|W\|_{L^2}^2$, hence we get
  \begin{align}
    M_{11} &= \mu^{-2}\big({-}\la i\eee^{i\zeta}\Lambda W_\mu, i\eee^{i\zeta} W_\mu\ra - \la \eee^{i\zeta}\Lambda W_\mu, g\ra\big) = \|W\|_{L^2}^2 + O(\|g\|_\cE) = \|W\|_{L^2}^2 + O(|t|^{-\frac{N-1}{2(N-6)}}), \\
    M_{12} &= \mu^{-2}\big(\la i\eee^{i\zeta}\Lambda W_\mu, \eee^{i\zeta}\Lambda W_\mu\ra - \la i\eee^{i\zeta}\Lambda\Lambda W_\mu, g\ra\big) = O(\|g\|_\cE) = O(|t|^{-\frac{N-1}{2(N-6)}}), \\
    M_{13} &= \lambda^{-2}\la i\eee^{i\zeta}\Lambda W_\mu, -i\eee^{i\theta} W_\lambda\ra = O(1), \\
    M_{14} &= \lambda^{-2}\la i\eee^{i\zeta}\Lambda W_\mu, \eee^{i\theta}\Lambda W_\lambda\ra = O(1).
  \end{align}

  Let us consider the term
  \begin{equation}
    \label{eq:B1}
    B_1 = -\big\la i\eee^{i\zeta}\Lambda W_\mu, i\Delta g + i\big(f(\eee^{i\zeta}W_{\mu} + \eee^{i\theta}W_{\lambda} + g) - f(\eee^{i\zeta}W_\mu) - f(\eee^{i\theta}W_\lambda)\big) \big\ra.
  \end{equation}
  From \eqref{eq:conj-ker-2} (with $\theta$ replaced by $\zeta$ and $\lambda$ replaced by $\mu$) we obtain
  \begin{equation}
    \begin{aligned}
    B_1 &= -\mu^{-2}\big\la i\eee^{i\zeta}\Lambda W_\mu, i\big(f(\eee^{i\zeta}W_{\mu} + \eee^{i\theta}W_\lambda + g) - f(\eee^{i\zeta}W_\mu) - f(\eee^{i\theta}W_\lambda) - f'(\eee^{i\zeta}W_\mu)g\big) \big\ra \\
    &= -\mu^{-2}\big\la \eee^{i\zeta}\Lambda W_\mu, \big(f(\eee^{i\zeta}W_{\mu} + \eee^{i\theta}W_\lambda + g) - f(\eee^{i\zeta}W_\mu) - f(\eee^{i\theta}W_\lambda) - f'(\eee^{i\zeta}W_\mu)g\big) \big\ra.
  \end{aligned}
  \end{equation}
  First we show that
  \begin{equation}
    \label{eq:B1-estim-1}
    \big|\big\la \eee^{i\zeta}\Lambda W_\mu, f(\eee^{i\zeta}W_\mu + \eee^{i\theta}W_\lambda + g) - f(\eee^{i\zeta}W_\mu + \eee^{i\theta}W_\lambda) - f'(\eee^{i\zeta}W_\mu + \eee^{i\theta}W_\lambda)g\big\ra\big| \lesssim \|g\|_\cE^2.
  \end{equation}
  Note that \eqref{eq:bootstrap-zeta} and \eqref{eq:bootstrap-theta} imply that $|\eee^{i\zeta}W_\mu + \eee^{i\theta}W_\lambda| \gtrsim W_\mu$, hence
  \eqref{eq:pointwise-1} with $z_1 = \eee^{i\zeta}W_\mu + \eee^{i\theta}W_\lambda$ and $z_2 = g$ yields
  \begin{equation}
    \label{eq:B1-estim-11}
    |f(\eee^{i\zeta}W_\mu + \eee^{i\theta}W_\lambda + g) - f(\eee^{i\zeta}W_\mu + \eee^{i\theta}W_\lambda) - f'(\eee^{i\zeta}W_\mu + \eee^{i\theta}W_\lambda)g| \lesssim W_\mu^{-\frac{N-6}{N-2}}|g|^2.
  \end{equation}
  Using the fact that $|\Lambda W| \lesssim W$ and the H\"older inequality we arrive at \eqref{eq:B1-estim-1}.

  Next we show that
  \begin{equation}
    \label{eq:B1-estim-2}
    \big|\big\la \eee^{i\zeta}\Lambda W_\mu, f(\eee^{i\zeta}W_\mu + \eee^{i\theta}W_\lambda) -
    f(\eee^{i\zeta}W_\mu) - f(\eee^{i\theta}W_\lambda)\big\ra\big| \lesssim \lambda^\frac{N-2}{2}.
  \end{equation}
  Using \eqref{eq:pointwise-2} we get
  \begin{equation}
    \label{eq:B1-estim-21}
    |f(\eee^{i\zeta}W_\mu + \eee^{i\theta}W_\lambda) -
    f(\eee^{i\zeta}W_\mu) - f(\eee^{i\theta}W_\lambda)| \lesssim |W_\mu|^\frac{4}{N-2}W_\lambda + |f(W_\lambda)|.
  \end{equation}
  The second term is easy. We have $f(W) \in L^1$ and we check that $\|f(W_\lambda)\|_{L^1} \sim \lambda^\frac{N-2}{2}$ by a change of variable.
  Consider the first term. In the region $|x| \leq 1$ we write
  \begin{equation}
    \|W_\lambda\|_{L^1(|x| \leq 1)} = \lambda^\frac{N+2}{2}\|W\|_{L^1(|x| \leq \lambda^{-1})}
    \lesssim \lambda^\frac{N+2}{2}\int_0^{\lambda^{-1}}r^{-N+2}r^{N-1}\ud r \sim \lambda^\frac{N-2}{2}.
  \end{equation}
  As for $|x| \geq 1$, we notice that $\|W_\lambda\|_{L^\infty(|x| \geq 1)} \lesssim \lambda^\frac{N-2}{2}$
  and $|\Lambda W_\mu|\cdot|W_\mu|^\frac{4}{N-2}$ is bounded in $L^1$.

  Finally, we show that
  \begin{equation}
    \label{eq:B1-estim-3}
    \big|\big\la \eee^{i\zeta}\Lambda W_\mu, \big(f'(\eee^{i\zeta}W_\mu + \eee^{i\theta}W_\lambda) - f'(\eee^{i\zeta}W_\mu)\big)g\big\ra\big| \lesssim \lambda^\frac{N-2}{4}\|g\|_\cE.
  \end{equation}
In the region $|x| \leq \sqrt\lambda$ it suffices to use the bound
\begin{equation}
  |f'(\eee^{i\zeta}W_\mu + \eee^{i\theta}W_\lambda) - f'(\eee^{i\zeta}W_\mu)| \lesssim W_\lambda^\frac{4}{N-2}
\end{equation}
and the fact that
\begin{equation}
  \label{eq:B1-estim-31}
  \|W_\lambda^\frac{4}{N-2}\|_{L^\frac{2N}{N+2}(|x|\leq \sqrt\lambda)} = \lambda^\frac{N-2}{2}\|W^\frac{4}{N-2}\|_{L^\frac{2N}{N+2}(|x| \leq \lambda^{-\frac 12})} \lesssim \lambda^\frac{N+2}{4},
\end{equation}
where the last inequality follows from $W^\frac{4}{N-2}(x) \lesssim |x|^{-4}$.
In the region $|x| \geq \sqrt\lambda$ we use H\"older and the fact that
\begin{equation}
  \label{eq:B1-estim-32}
  \|W_\lambda\|_{L^\frac{2N}{N-2}(|x| \geq \sqrt\lambda)} =\|W\|_{L^\frac{2N}{N-2}(|x|\geq \lambda^{-\frac 12})}
  \lesssim \Big(\int_{\lambda^{-\frac 12}}^{+\infty}r^{-2N}r^{N-1}\ud r\Big)^\frac{N-2}{2N} \lesssim \lambda^\frac{N-2}{4}.
\end{equation}

Taking the sum of \eqref{eq:B1-estim-1}, \eqref{eq:B1-estim-2}, \eqref{eq:B1-estim-3} and using \eqref{eq:bootstrap-lambda}, \eqref{eq:bootstrap-g} we obtain
  \begin{equation}
    \label{eq:B1-estim}
    |B_1| \lesssim |t|^{-\frac{N-2}{N-6}}.
  \end{equation}

  \textbf{Second row.}
  Differentiating $\la -\eee^{i\zeta}W_\mu, g\ra = 0$ we obtain
  \begin{equation}
    \begin{aligned}
      0 &= \dd t \la -\eee^{i\zeta}W_\mu, g\ra = -\zeta'\la i\eee^{i\zeta}W_\mu, g\ra + \frac{\mu'}{\mu}\la \eee^{i\zeta}\Lambda W_\mu, g\ra - \la \eee^{i\zeta}W_\mu, \partial_t g\ra \\
      &= \zeta'\big(\la \eee^{i\zeta}W_\mu, i\eee^{i\zeta} W_\mu\ra - \la i\eee^{i\zeta}\Lambda W_\mu, g\ra\big) + \frac{\mu'}{\mu}\big({-}\la \eee^{i\zeta}W_\mu, \eee^{i\zeta}\Lambda W_\mu\ra + \la \eee^{i\zeta}\Lambda W_\mu, g\ra\big) \\
      &+ \theta'\la \eee^{i\zeta}W_\mu, i\eee^{i\theta} W_\lambda\ra + \frac{\lambda'}{\lambda}\la {-}\eee^{i\zeta}W_\mu, \eee^{i\theta}\Lambda W_\lambda\ra \\
      &- \big\la \eee^{i\zeta} W_\mu, i\Delta g + i\big(f(\eee^{i\zeta}W_{\mu} + \eee^{i\theta}W_{\lambda} + g) - f(\eee^{i\zeta}W_\mu) - f(\eee^{i\theta}W_\lambda)\big) \big\ra,
  \end{aligned}
  \end{equation}
  which yields
  \begin{align}
    M_{21} &= \mu^{-2}\big(\la \eee^{i\zeta} W_\mu, i\eee^{i\zeta} W_\mu\ra - \la i\eee^{i\zeta} W_\mu, g\ra\big) = O(\|g\|_\cE), \\
    M_{22} &= \mu^{-2}\big({-}\la \eee^{i\zeta}W_\mu, \eee^{i\zeta}\Lambda W_\mu\ra + \la \eee^{i\zeta}\Lambda W_\mu, g\ra\big) = \|W\|_{L^2}^2 + O(\|g\|_\cE), \\
    M_{23} &= \lambda^{-2}\la \eee^{i\zeta} W_\mu, i\eee^{i\theta} W_\lambda\ra = O(1), \\
    M_{24} &= \lambda^{-2}\la -\eee^{i\zeta} W_\mu, \eee^{i\theta}\Lambda W_\lambda\ra = O(1).
  \end{align}

  Consider now the term
  \begin{equation}
    \label{eq:B2}
    \begin{aligned}
    B_2 &= \big\la \eee^{i\zeta} W_\mu, i\Delta g + i\big(f(\eee^{i\zeta}W_{\mu} + \eee^{i\theta}W_{\lambda} + g) - f(\eee^{i\zeta}W_\mu) - f(\eee^{i\theta}W_\lambda)\big) \big\ra \\
    &= \big\la \eee^{i\zeta}W_\mu, i\big(f(\eee^{i\zeta}W_{\mu} + \eee^{i\theta}W_{\lambda} + g) - f(\eee^{i\zeta}W_\mu) - f(\eee^{i\theta}W_\lambda) - f'(\eee^{i\zeta}W_\mu)g\big) \big\ra, \\
  \end{aligned}
  \end{equation}
  where the second equality follows from \eqref{eq:conj-ker-1}.
  The proof of \eqref{eq:B1-estim} yields
\begin{equation}
  \label{eq:B2-estim}
  |B_2| \lesssim |t|^{-\frac{N-2}{N-6}}.
\end{equation}

  \textbf{Third row.}
  Differentiating $\la i\eee^{i\theta}\Lambda W_\lambda, g\ra = 0$ we obtain
  \begin{equation}
    \begin{aligned}
      0 &= \dd t \la i\eee^{i\theta}\Lambda W_\lambda, g\ra = -\theta'\la \eee^{i\theta}\Lambda W_\lambda, g\ra - \frac{\lambda'}{\lambda}\la i\eee^{i\theta}\Lambda\Lambda W_\lambda, g\ra + \la i\eee^{i\theta}\Lambda W_\lambda, \partial_t g\ra \\
      &= \zeta'\la i\eee^{i\theta}\Lambda W_\lambda, -i\eee^{i\zeta} W_\mu\ra + \frac{\mu'}{\mu}\la i\eee^{i\theta}\Lambda W_\lambda, \eee^{i\zeta}\Lambda W_\mu\ra \\
      &+ \theta'\big(\la i\eee^{i\theta}\Lambda W_\lambda, {-}i\eee^{i\theta} W_\lambda\ra -\la \eee^{i\theta}\Lambda W_\lambda, g\ra\big) + \frac{\lambda'}{\lambda}\big(\la i\eee^{i\theta}\Lambda W_\lambda, \eee^{i\theta}\Lambda W_\lambda\ra - \la i\eee^{i\theta}\Lambda\Lambda W_\lambda, g\ra\big) \\
      &+ \big\la i\eee^{i\theta}\Lambda W_\lambda, i\Delta g + i\big(f(\eee^{i\zeta}W_{\mu} + \eee^{i\theta}W_{\lambda} + g) - f(\eee^{i\zeta}W_\mu) - f(\eee^{i\theta}W_\lambda)\big) \big\ra,
  \end{aligned}
  \end{equation}
  which yields
  \begin{align}
    M_{31} &= \mu^{-2}\la i\eee^{i\theta}\Lambda W_\lambda, -i\eee^{i\zeta} W_\mu\ra =O(\lambda^2) =  O(|t|^{-\frac{4}{N-6}}), \\
    M_{32} &= \mu^{-2}\la i\eee^{i\theta}\Lambda W_\lambda, \eee^{i\zeta}\Lambda W_\mu\ra = O(\lambda^2) = O(|t|^{-\frac{4}{N-6}}), \\
    M_{33} &= \lambda^{-2}\big(\la i\eee^{i\theta}\Lambda W_\lambda, {-}i\eee^{i\theta} W_\lambda\ra -\la \eee^{i\theta}\Lambda W_\lambda, g\ra\big) = \|W\|_{L^2}^2 + O(\|g\|_\cE) = \|W\|_{L^2}^2 + O(|t|^{-\frac{N-1}{2(N-6)}}), \\
    M_{34} &= \lambda^{-2}\big(\la i\eee^{i\theta}\Lambda W_\lambda, \eee^{i\theta}\Lambda W_\lambda\ra - \la i\eee^{i\theta}\Lambda\Lambda W_\lambda, g\ra\big) = O(\|g\|_\cE) = O(|t|^{-\frac{N-1}{2(N-6)}}).
  \end{align}

  Let us consider the term
  \begin{equation}
    \begin{aligned}
    B_3 &= -\big\la i\eee^{i\theta}\Lambda W_\lambda, i\Delta g + i\big(f(\eee^{i\zeta}W_{\mu} + \eee^{i\theta}W_{\lambda} + g) - f(\eee^{i\zeta}W_\mu) - f(\eee^{i\theta}W_\lambda)\big) \big\ra \\
    &= -\big\la i\eee^{i\theta}\Lambda W_\lambda, i\big(f(\eee^{i\zeta}W_{\mu} + \eee^{i\theta}W_{\lambda} + g) - f(\eee^{i\zeta}W_\mu) - f(\eee^{i\theta}W_\lambda) - f'(\eee^{i\theta}W_\lambda)g\big) \big\ra \\
    &= -\big\la \eee^{i\theta}\Lambda W_\lambda, f(\eee^{i\zeta}W_{\mu} + \eee^{i\theta}W_{\lambda} + g) - f(\eee^{i\zeta}W_\mu) - f(\eee^{i\theta}W_\lambda) - f'(\eee^{i\theta}W_\lambda)g \big\ra,
  \end{aligned}
  \end{equation}
  where the second equality follows from \eqref{eq:conj-ker-2}.
  Comparing this formula with \eqref{eq:K-def} we obtain
  \begin{equation}
    \label{eq:B3-K}
    \begin{aligned}
      B_3 - K &= -\la \eee^{i\theta}\Lambda W_\lambda, f(\eee^{i\zeta}W_\mu + \eee^{i\theta}W_\lambda) - f(\eee^{i\zeta}W_\mu) - f(\eee^{i\theta}W_\lambda)\ra \\
      &- \big\la \eee^{i\theta} \Lambda W_\lambda, \big(f'(\eee^{i\zeta}W_\mu + \eee^{i\theta}W_\lambda) - f'(\eee^{i\theta}W_\lambda)\big)g\big\ra.
  \end{aligned}
  \end{equation}
  First we treat the second line by showing that
  \begin{equation}
    \label{eq:B3-estim-2}
    \big|\big\la \eee^{i\theta}\Lambda W_\lambda, \big(f'(\eee^{i\zeta}W_\mu + \eee^{i\theta}W_\lambda) - f'(\eee^{i\theta}W_\lambda)\big)g\big\ra\big| \lesssim \lambda^\frac{N}{4}\|g\|_\cE.
  \end{equation}
  We consider separately $|x| \leq \lambda^\gamma$ and $|x| \geq \lambda^\gamma$ with $\gamma = \frac{N-4}{2(N-2)}$.
  In the region $|x| \leq \lambda^\gamma$ we use the bound
  \begin{equation}
  |f'(\eee^{i\zeta}W_\mu + \eee^{i\theta}W_\lambda) - f'(\eee^{i\theta}W_\lambda)| \lesssim W_\lambda^{-\frac{N-6}{N-2}}W_\mu.
\end{equation}
It implies that
\begin{equation}
  |\eee^{i\theta}\Lambda W_\lambda| \cdot \big|\big(f'(\eee^{i\zeta}W_\mu + \eee^{i\theta}W_\lambda) - f'(\eee^{i\theta}W_\lambda)\big)g\big| \lesssim W_\lambda^\frac{4}{N-2}|g|
\end{equation}
pointwise and it suffices to see that
\begin{equation}
  \begin{aligned}
    \|W_\lambda^\frac{4}{N-2}\|_{L^{\frac{2N}{N+2}}(|x|\leq\lambda^\gamma)} &= \lambda^\frac{N-2}{2}\|W^\frac{4}{N-2}\|_{L^\frac{2N}{N+2}(|x|\leq \lambda^{\gamma-1})} \lesssim \lambda^\frac{N-2}{2}\Big(\int_0^{\lambda^{\gamma-1}}r^{-4\frac{2N}{N+2}}r^{N-1}\ud r\Big)^\frac{N+2}{2N} \\
    &\lesssim \lambda^\frac{N-2}{2}\lambda^{(\gamma-1)\frac{N(N-6)}{N+2}\cdot \frac{N+2}{2N}} = \lambda^{\frac{N-2}{2} - \frac{N(N-6)}{4(N-2)}} = \lambda^{\frac{N^2 - 2N + 8}{4(N-2)}} \ll \lambda^\frac N4.
  \end{aligned}
\end{equation}
In the region $|x| \geq \lambda^\gamma$ we have
\begin{equation}
  \begin{aligned}
  \|\Lambda W_\lambda\|_{L^{\frac{2N}{N-2}}(|x| \geq \lambda^\gamma)} &\lesssim \|W_\lambda\|_{L^{\frac{2N}{N-2}}(|x| \geq \lambda^\gamma)} = \|W\|_{L^{\frac{2N}{N-2}}(|x| \geq \lambda^{\gamma-1})} \\
  &\lesssim \Big(\int_{\lambda^{\gamma-1}}^{+\infty}r^{-2N}r^{N-1}\ud r\Big)^\frac{N-2}{2N} \lesssim \lambda^{(1-\gamma)N\frac{N-2}{2N}} = \lambda^\frac N4,
\end{aligned}
\end{equation}
which yields the required bound by H\"older.

We are left with the first line in \eqref{eq:B3-K}. We will prove that
\begin{equation}
  \label{eq:B3-estim-3}
  \Big|\la \eee^{i\theta}\Lambda W_\lambda, f(\eee^{i\zeta}W_\mu + \eee^{i\theta}W_\lambda) - f(\eee^{i\zeta}W_\mu) - f(\eee^{i\theta}W_\lambda)\ra - \frac{(N-2)\kappa^\frac{N-4}{2}\|W\|_{L^2}^2}{N-6}\theta\lambda^\frac{N-2}{2}\Big| \lesssim |t|^{-\frac{N}{N-6}}.
\end{equation}
For this, we first check that
\begin{equation}
  \label{eq:B3-estim-31}
  |\la \eee^{i\theta}\Lambda W_\lambda, f(\eee^{i\zeta}W_\mu + \eee^{i\theta}W_\lambda) - f(\eee^{i\zeta}W_\mu) - f(\eee^{i\theta}W_\lambda) - f'(\eee^{i\theta}W_\lambda)(\eee^{i\zeta}W_\mu)\ra| \lesssim \lambda^\frac N2.
\end{equation}
  In the region $|x| \geq \sqrt\lambda$ we have $W_\lambda \lesssim W_\mu$, which implies
  \begin{equation}
    |f(\eee^{i\zeta}W_\mu + \eee^{i\theta}W_\lambda) - f(\eee^{i\zeta}W_\mu) - f(\eee^{i\theta}W_\lambda) - f'(\eee^{i\theta}W_\lambda)(\eee^{i\zeta}W_\mu)| \lesssim W_\lambda^\frac{4}{N-2}W_\mu,
  \end{equation}
  hence the required bound follows from $|\Lambda W| \lesssim W$ and
  \begin{equation}
    \|W_\lambda^\frac{N+2}{N-2}\|_{L^1(|x| \geq \sqrt\lambda)} \lesssim \lambda^\frac{N-2}{2}\int_{\lambda^{-\frac 12}}^{+\infty}r^{-N-2}r^{N-1}\ud r \sim \lambda^\frac N2.
  \end{equation}
  In the region $|x| \leq \sqrt\lambda$ we have $W_\mu \lesssim W_\lambda$, which implies
  \begin{equation}
    |f(\eee^{i\zeta}W_\mu + \eee^{i\theta}W_\lambda) - f(\eee^{i\zeta}W_\mu) - f(\eee^{i\theta}W_\lambda) - f'(\eee^{i\theta}W_\lambda)(\eee^{i\zeta}W_\mu)| \lesssim W_\mu^\frac{N+2}{N-2},
  \end{equation}
  hence the required bound follows from
  \begin{equation}
    \|W_\lambda\|_{L^1(|x| \leq \sqrt\lambda)} \lesssim \lambda^\frac{N+2}{2}\int_0^{\lambda^{-\frac 12}}r^{-N+2}r^{N-1}\ud r \sim \lambda^\frac N2.
  \end{equation}

Finally, we need to check that
\begin{equation}
  \label{eq:B3-estim-32}
  \Big|\la \eee^{i\theta}\Lambda W_\lambda, f'(\eee^{i\theta}W_\lambda)(\eee^{i\zeta}W_\mu)\ra - \frac{(N-2)\kappa^\frac{N-4}{2}\|W\|_{L^2}^2}{N-6}\theta\lambda^\frac{N-2}{2}\Big| \lesssim |t|^{-\frac{N}{N-6}}.
\end{equation}
The definition of $f'(z)$ yields
\begin{equation}
  \label{eq:calcul-inter}
  f'(\eee^{i\theta}W_\lambda)(\eee^{i\zeta}W_\mu) = W_\mu W_\lambda^\frac{4}{N-2}\big(\eee^{i\zeta} + \frac{4}{N-2}\eee^{i\theta}\Re(\eee^{i(\zeta - \theta)})\big),
\end{equation}
hence
\begin{equation}
  \label{eq:B3-estim-32-1}
  \la \eee^{i\theta}\Lambda W_\lambda, f'(\eee^{i\theta}W_\lambda)(\eee^{i\zeta}W_\mu)\ra = \frac{N+2}{N-2}\Re(\eee^{i(\zeta- \theta)})\int W_\mu W_\lambda^\frac{4}{N-2}\Lambda W_\lambda\ud x.
\end{equation}
Since $\Big|\int W_\mu W_\lambda^\frac{4}{N-2}\Lambda W_\lambda\ud x\Big| \lesssim \lambda^\frac{N-2}{2} \lesssim |t|^{-\frac{N-2}{N-6}}$, we obtain
\begin{equation}
  \label{eq:B3-estim-32-th}
  \Big|\frac{N+2}{N-2}\Re(\eee^{i(\zeta- \theta)})\int W_\mu W_\lambda^\frac{4}{N-2}\Lambda W_\lambda\ud x + \frac{N+2}{N-2}\theta\int W_\mu W_\lambda^\frac{4}{N-2}\Lambda W_\lambda\ud x\Big| \lesssim |t|^{-\frac{N+1}{N-6}} \ll |t|^{-\frac{N}{N-6}}.
\end{equation}
Next, we prove that
\begin{equation}
  \label{eq:B3-estim-32-mu}
  \Big|\int W_\lambda^\frac{4}{N-2}\Lambda W_\lambda\ud x - \int W_\mu W_\lambda^\frac{4}{N-2}\Lambda W_\lambda\ud x\Big| \lesssim \lambda^\frac N2 + |\mu - 1|\lambda^\frac{N-2}{2} \lesssim |t|^{-\frac{N}{N-6}}.
\end{equation}
Indeed, in the region $|x| \geq \sqrt\lambda$ both terms verify the bound. In the region $|x| \leq \sqrt\lambda$ we have $\big|W_\mu - \mu^{-\frac{N-2}{2}}\big| \lesssim |x|^2 \lesssim \lambda$ and $\big|\mu^{-\frac{N-2}{2}} - 1\big| \lesssim |\mu - 1|$, from which \eqref{eq:B3-estim-32-mu} follows.

From \eqref{eq:explicit-3} and \eqref{eq:explicit-1} we get
\begin{equation}
  \frac{N+2}{N-2}\int W_\lambda^\frac{4}{N-2}\Lambda W_\lambda \ud x = -\frac{(N-2)\kappa^\frac{N-4}{2} \|W\|_{L^2}^2}{N-6}\lambda^\frac{N-2}{2},
\end{equation}
and \eqref{eq:B3-estim-32} follows from \eqref{eq:B3-estim-32-1}, \eqref{eq:B3-estim-32-th} and \eqref{eq:B3-estim-32-mu}.

From \eqref{eq:B3-K}, \eqref{eq:B3-estim-2}, \eqref{eq:B3-estim-3} and the triangle inequality we infer
\begin{equation}
  \label{eq:B3-estim}
  \Big|B_3 - K + \frac{(N-2)\kappa^\frac{N-4}{2}\|W\|_{L^2}^2}{N-6}\theta\lambda^\frac{N-2}{2}\Big| \lesssim |t|^{-\frac{N}{N-6}} + |t|^{-\frac{N}{2(N-6)}}\|g\|_\cE \lesssim |t|^{-\frac{2N-1}{2(N-6)}}.
\end{equation}
In particular, since $|\theta \lambda^\frac{N-2}{2}| \leq |t|^{-\frac{N-1}{N-6}}$, we have
\begin{equation}
  \label{eq:B3-estim-rough}
  \big|B_3\big| \lesssim |t|^{-\frac{N-1}{N-6}} + |t|^{-\frac{N}{N-6}} + |t|^{-\frac{N}{2(N-6)}}\|g\|_\cE + \|g\|_\cE^2 \lesssim |t|^{-\frac{N-1}{N-6}} + C_0^2 |t|^{-\frac{N}{N-6}} \lesssim |t|^{-\frac{N-1}{N-6}}.
\end{equation}
  \textbf{Forth row.}
  Differentiating $\la {-}\eee^{i\theta}W_\lambda, g\ra = 0$ we obtain
  \begin{equation}
    \begin{aligned}
      0 &= \dd t \la -\eee^{i\theta}W_\lambda, g\ra = -\theta'\la i\eee^{i\theta}W_\lambda, g\ra + \frac{\lambda'}{\lambda}\la \eee^{i\theta}\Lambda W_\lambda, g\ra - \la \eee^{i\theta}W_\lambda, \partial_t g\ra \\
      &= \zeta'\la i\eee^{i\theta}W_\lambda, i\eee^{i\zeta} W_\mu\ra - \frac{\mu'}{\mu}\la \eee^{i\theta}W_\lambda, \eee^{i\zeta}\Lambda W_\mu\ra \\
      &+ \theta'\big(\la \eee^{i\theta}W_\lambda, i\eee^{i\theta} W_\lambda\ra-\la i\eee^{i\theta}W_\lambda, g\ra\big) + \frac{\lambda'}{\lambda}\big(\la {-}\eee^{i\theta}W_\lambda, \eee^{i\theta}\Lambda W_\lambda\ra +\la \eee^{i\theta}\Lambda W_\lambda, g\ra\big) \\
      &- \big\la \eee^{i\theta} W_\lambda, i\Delta g + i\big(f(\eee^{i\zeta}W_{\mu} + \eee^{i\theta}W_{\lambda} + g) - f(\eee^{i\zeta}W_\mu) - f(\eee^{i\theta}W_\lambda)\big) \big\ra,
  \end{aligned}
  \end{equation}
  which yields
  \begin{align}
    M_{41} &= \mu^{-2}\la i\eee^{i\theta}W_\lambda, i\eee^{i\zeta} W_\mu\ra= O(\lambda^2) = O(|t|^{-\frac{4}{N-6}}), \\
    M_{42} &= \mu^{-2}\la \eee^{i\theta}W_\lambda, \eee^{i\zeta}\Lambda W_\mu\ra =O(\lambda^2) =  O(|t|^{-\frac{4}{N-6}}), \\
    M_{43} &= \lambda^{-2}\big(\la \eee^{i\theta}W_\lambda, i\eee^{i\theta} W_\lambda\ra-\la i\eee^{i\theta}W_\lambda, g\ra\big) = O(\|g\|_\cE) = O(|t|^{-\frac{N-1}{2(N-6)}}), \\
    M_{44} &= \lambda^{-2}\big(\la {-}\eee^{i\theta}W_\lambda, \eee^{i\theta}\Lambda W_\lambda\ra +\la \eee^{i\theta}\Lambda W_\lambda, g\ra\big) = \|W\|_{L^2}^2 + O(\|g\|_\cE) = \|W\|_{L^2}^2 + O(|t|^{-\frac{N-1}{2(N-6)}}).
  \end{align}

  Let us consider the term
  \begin{equation}
    \begin{aligned}
    B_4 &= \big\la \eee^{i\theta} W_\lambda, i\Delta g + i\big(f(\eee^{i\zeta}W_{\mu} + \eee^{i\theta}W_{\lambda} + g) - f(\eee^{i\zeta}W_\mu) - f(\eee^{i\theta}W_\lambda)\big) \big\ra \\
    &= \big\la \eee^{i\theta}W_\lambda, i\big(f(\eee^{i\zeta}W_{\mu} + \eee^{i\theta}W_{\lambda} + g) - f(\eee^{i\zeta}W_\mu) - f(\eee^{i\theta}W_\lambda) - f'(\eee^{i\theta}W_\lambda)g\big) \big\ra,
    \end{aligned}
  \end{equation}
  where the last equality follows from \eqref{eq:conj-ker-1}.

  First we show that
  \begin{equation}
    \label{eq:B4-estim-1}
    \big|\big\la \eee^{i\theta}W_\lambda, i\big(f(\eee^{i\zeta}W_\mu + \eee^{i\theta}W_\lambda + g) - f(\eee^{i\zeta}W_\mu + \eee^{i\theta}W_\lambda) - f'(\eee^{i\zeta}W_\mu + \eee^{i\theta}W_\lambda)g\big)\big\ra\big| \lesssim \|g\|_\cE^2.
  \end{equation}
  Note that \eqref{eq:bootstrap-zeta} and \eqref{eq:bootstrap-theta} imply that $|\eee^{i\zeta}W_\mu + \eee^{i\theta}W_\lambda| \gtrsim W_\lambda$, hence
  \eqref{eq:pointwise-1} with $z_1 = \eee^{i\zeta}W_\mu + \eee^{i\theta}W_\lambda$ and $z_2 = g$ yields
  \begin{equation}
    \label{eq:B3-estim-11}
    |f(\eee^{i\zeta}W_\mu + \eee^{i\theta}W_\lambda + g) - f(\eee^{i\zeta}W_\mu + \eee^{i\theta}W_\lambda) - f'(\eee^{i\zeta}W_\mu + \eee^{i\theta}W_\lambda)g| \lesssim W_\lambda^{-\frac{N-6}{N-2}}|g|^2.
  \end{equation}
  Using the fact that $|\Lambda W| \lesssim W$ and the H\"older inequality we arrive at \eqref{eq:B4-estim-1}.

  The proof of \eqref{eq:B3-estim-2} yields
  \begin{equation}
    \label{eq:B4-estim-2}
    \big|\big\la \eee^{i\theta}W_\lambda, i\big(f'(\eee^{i\zeta}W_\mu + \eee^{i\theta}W_\lambda) - f'(\eee^{i\theta}W_\lambda)\big)g\big\ra\big| \lesssim \lambda^\frac{N}{4}\|g\|_\cE.
  \end{equation}

  The proof of \eqref{eq:B3-estim-31} yields
  \begin{equation}
    \label{eq:B4-estim-3}
    \big|\big\la \eee^{i\theta}W_\lambda, i\big(f(\eee^{i\zeta}W_\mu + \eee^{i\theta}W_\lambda) - f(\eee^{i\zeta}W_\mu) - f(\eee^{i\theta}W_\lambda) - f'(\eee^{i\theta}W_\lambda)(\eee^{i\zeta}W_\mu)\big)\big\ra\big| \lesssim \lambda^\frac N2.
  \end{equation}

  Finally, we show that
  \begin{equation}
    \label{eq:B4-estim-4}
    \Big|\big\la \eee^{i\theta}W_\lambda, if'(\eee^{i\theta}W_\lambda)(\eee^{i\zeta}W_\mu)\big\ra - \frac{2\kappa^\frac{N-4}{2}\|W\|_{L^2}^2}{N-6}\lambda^\frac{N-2}{2}\Big| \lesssim |t|^{-\frac{N}{N-6}}.
  \end{equation}
  Using again \eqref{eq:calcul-inter} we get
  \begin{equation}
    \label{eq:B4-estim-40}
    \big\la \eee^{i\theta}W_\lambda, if'(\eee^{i\theta}W_\lambda)(\eee^{i\zeta}W_\mu)\big\ra = \Re(i\eee^{i(\zeta-\theta)})\int W_\mu W_\lambda^\frac{N+2}{N-2}\ud x.
  \end{equation}
  We have $\big|\Re(\eee^{-i\theta}) - 1\big| \lesssim |\theta|^2 \leq |t|^{-\frac{2}{N-6}}$ and $\big|i\eee^{i(\zeta - \theta)} - \eee^{-i\theta}\big| = |\eee^{i\zeta} + i| \lesssim |\zeta| \leq |t|^{-\frac{3}{N-6}}$, hence
  \begin{equation}
    \label{eq:B4-estim-41}
    \big|\Re(i\eee^{i(\zeta-\theta)}) - 1\big| \lesssim |t|^{-\frac{2}{N-6}}.
  \end{equation}
  Since $\Big|\int W_\mu W_\lambda^\frac{N+2}{N-2}\ud x\Big| \lesssim \lambda^\frac{N-2}{2} \lesssim |t|^{-\frac{N-2}{N-6}}$, we obtain
\begin{equation}
  \label{eq:B4-estim-42}
  \Big|\Re(i\eee^{i(\zeta- \theta)})\int W_\mu W_\lambda^\frac{N+2}{N-2}\ud x - \int W_\mu W_\lambda^\frac{N+2}{N-2}\ud x\Big| \lesssim |t|^{-\frac{N}{N-6}}.
\end{equation}

  The proof of \eqref{eq:B3-estim-32-mu} yields
  \begin{equation}
  \label{eq:B4-estim-43}
  \Big|\int W_\lambda^\frac{N+2}{N-2}\ud x - \int W_\mu W_\lambda^\frac{N+2}{N-2}\ud x\Big| \lesssim \lambda^\frac N2 + |\mu-1|\lambda^\frac{N-2}{2} \lesssim |t|^{-\frac{N}{N-6}}.
\end{equation}
From \eqref{eq:explicit-2} we get
\begin{equation}
  \int W_\lambda^\frac{N+2}{N-2} \ud x = \frac{2\kappa^\frac{N-4}{2} \|W\|_{L^2}^2}{N-6}\lambda^\frac{N-2}{2},
\end{equation}
hence \eqref{eq:B4-estim-4} follows from \eqref{eq:B4-estim-40}, \eqref{eq:B4-estim-42} and \eqref{eq:B4-estim-43}.
  
  From \eqref{eq:B4-estim-1}, \eqref{eq:B4-estim-2}, \eqref{eq:B4-estim-3}, \eqref{eq:B4-estim-4} and the triangle inequality we obtain
  \begin{equation}
    \label{eq:B4-estim}
    \Big|B_4 - \frac{2\kappa^\frac{N-4}{2}\|W\|_{L^2}^2}{N-6}\lambda(t)^\frac{N-2}{2}\Big| \lesssim |t|^{-\frac{N}{N-6}} + \|g\|_\cE^2,
  \end{equation}
  in particular
  \begin{equation}
    \label{eq:B4-estim-rough}
    |B_4| \lesssim |t|^{-\frac{N-2}{N-6}} + \|g\|_\cE^2 \lesssim |t|^{-\frac{N-2}{N-6}}.
  \end{equation}
  \textbf{Conclusion}
  From the bounds on the coefficients $M_{ij}$ obtained above it follows that the matrix $(M_{ij})$ is strictly diagonally dominant.
  Since $N \geq 7$, \eqref{eq:bootstrap-g} implies that $\|g\|_\cE \lesssim |t|^{-\frac{2}{N-6}}$, hence we can write
  \begin{equation}
    \label{eq:mod-system-approx}
    \begin{gathered}
    \begin{pmatrix}
      M_{11} & M_{12} & M_{13} & M_{14} \\ M_{21} & M_{22} & M_{23} & M_{24} \\ M_{31} & M_{32} & M_{33} & M_{34} \\ M_{41} & M_{42} & M_{43} & M_{44}
    \end{pmatrix}= \\
    \begin{pmatrix}
      \|W\|_{L^2}^2 + O(|t|^{-\frac{2}{N-6}}) & O(|t|^{-\frac{2}{N-6}}) & O(1) & O(1) \\ O(|t|^{-\frac{2}{N-6}}) & \|W\|_{L^2}^2 + O(|t|^{-\frac{2}{N-6}}) & O(1) & O(1) \\ O(|t|^{-\frac{2}{N-6}}) & O(|t|^{-\frac{2}{N-6}}) & \|W\|_{L^2}^2 + O(|t|^{-\frac{2}{N-6}}) & O(|t|^{-\frac{2}{N-6}}) \\ O(|t|^{-\frac{2}{N-6}}) & O(|t|^{-\frac{2}{N-6}}) & O(|t|^{-\frac{2}{N-6}}) & \|W\|_{L^2}^2 + O(|t|^{-\frac{2}{N-6}})
    \end{pmatrix}.
  \end{gathered}
  \end{equation}
  Let $(m_{jk}) = (M_{jk})^{-1}$. It is easy to see that the Cramer's rule implies that $(m_{jk})$ is also of the form given in \eqref{eq:mod-system-approx},
  with $\|W\|_{L^2}^{-2}$ instead of $\|W\|_{L^2}^2$ for the diagonal terms.

  Resuming \eqref{eq:B1-estim}, \eqref{eq:B2-estim}, \eqref{eq:B3-estim-rough} and \eqref{eq:B4-estim-rough}, we have
  \begin{equation}
    \label{eq:B-estim}
    |B_1| + |B_2| + |B_3| + |B_4| \lesssim |t|^{-\frac{N-2}{N-6}}.
  \end{equation}
  This and the form of the matrix $(m_{jk})$ directly imply $|\zeta'| + |\mu'| \lesssim |t|^{-\frac{N-2}{N-6}}$, hence \eqref{eq:mod-zeta} and \eqref{eq:mod-mu}.
  Note that the coefficients in the third and the forth row of the matrix $(m_{jk})$ let us gain an additional factor $|t|^{-\frac{2}{N-6}}$.
  We obtain $\big|\lambda\lambda' - \|W\|_{L^2}^{-2}B_4\big| \lesssim |t|^{-\frac{N-1}{N-6}}$, which implies \eqref{eq:mod-l} thanks to \eqref{eq:B4-estim}.
  Similarly, \eqref{eq:B3-estim} yields \eqref{eq:mod-th}, which finishes the proof.
\end{proof}
\begin{remark}
  A computation similar to the proof of \eqref{eq:B1-estim-1} shows that $|K| \lesssim \|g\|_\cE^2 \leq |t|^{-\frac{N-1}{N-6}}$,
  so we obtain the following simple consequence of Lemma~\ref{lem:mod}:
  \begin{equation}
    \label{eq:param-all}
    |\zeta'(t)| + \Big|\frac{\mu'(t)}{\mu(t)}\Big| + |\theta'(t)| + \Big|\frac{\lambda'(t)}{\lambda(t)}\Big| \lesssim |t|^{-1}
  \end{equation}
  (for the last term, this bound is sharp).
\end{remark}
\subsection{Control of the stable and unstable component}
An important step is to control
the stable and unstable components $a_1^\pm(t) = \la \alpha_{\zeta(t), \mu(t)}^\pm, g(t)\ra$ and $a_2^\pm(t)= \la \alpha_{\theta(t), \lambda(t)}^\pm, g(t)\ra$. Recall that $\nu > 0$ is the positive eigenvalue of the linearized flow, see \eqref{eq:Y1Y2}.
\begin{lemma}
  \label{lem:proper}
  Under assumptions of Lemma~\ref{lem:mod}, for $t \in [T, T_1]$ there holds
  \begin{align}
    \big| \dd t a_1^+(t) - \frac{\nu}{\mu(t)^2}a_1^+(t)\big| &\leq \frac{c}{\mu(t)^2}|t|^{-\frac{N}{2(N-6)}}, \label{eq:proper-1p} \\
    \big| \dd t a_1^-(t) + \frac{\nu}{\mu(t)^2}a_1^-(t)\big| &\leq \frac{c}{\mu(t)^2}|t|^{-\frac{N}{2(N-6)}}, \label{eq:proper-1m} \\
    \big| \dd t a_2^+(t) - \frac{\nu}{\lambda(t)^2}a_2^+(t)\big| &\leq \frac{c}{\lambda(t)^2}|t|^{-\frac{N}{2(N-6)}}, \label{eq:proper-2p} \\
    \big| \dd t a_2^-(t) + \frac{\nu}{\lambda(t)^2}a_2^-(t)\big| &\leq \frac{c}{\lambda(t)^2}|t|^{-\frac{N}{2(N-6)}}, \label{eq:proper-2m}
  \end{align}
  with $c \to 0$ as $|T_0| \to +\infty$.
\end{lemma}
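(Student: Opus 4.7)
The plan is to mirror the modulation computation of Lemma~\ref{lem:mod}: for each $a_j^\pm(t)$, differentiate in time, substitute \eqref{eq:dtg}, and rearrange so that the eigenfunction identity \eqref{eq:ap-eigen} produces the principal term $\pm\tfrac{\nu}{\lambda(t)^2}a_2^\pm(t)$ (respectively with $\mu(t)$ in place of $\lambda(t)$ for $a_1^\pm$). I describe the argument for \eqref{eq:proper-2p} in detail; \eqref{eq:proper-2m} is obtained by swapping $\alpha_{\theta, \lambda}^+$ with $\alpha_{\theta, \lambda}^-$, and \eqref{eq:proper-1p}--\eqref{eq:proper-1m} by exchanging the roles of the two bubbles.

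Writing $\dd t a_2^+ = \la \partial_t \alpha_{\theta, \lambda}^+, g\ra + \la \alpha_{\theta, \lambda}^+, \partial_t g\ra$ and expanding via \eqref{eq:dtg} produces five groups of terms: (a) the chain-rule derivative of $\alpha_{\theta, \lambda}^+$ in $\theta$ and $\lambda$; (b) the dispersive contribution $\la \alpha_{\theta, \lambda}^+, i\Delta g\ra$; (c) the inner-bubble modulation corrections $-\theta'\la \alpha_{\theta, \lambda}^+, i\eee^{i\theta}W_\lambda\ra + \frac{\lambda'}{\lambda}\la \alpha_{\theta, \lambda}^+, \eee^{i\theta}\Lambda W_\lambda\ra$, which vanish identically by \eqref{eq:proper-iW}--\eqref{eq:proper-LW}; (d) the outer-bubble modulation corrections $-\zeta'\la\alpha_{\theta, \lambda}^+, i\eee^{i\zeta}W_\mu\ra + \frac{\mu'}{\mu}\la\alpha_{\theta, \lambda}^+, \eee^{i\zeta}\Lambda W_\mu\ra$; and (e) the nonlinear interaction. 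From (e) I peel off $\la\alpha_{\theta, \lambda}^+, if'(\eee^{i\theta}W_\lambda)g\ra$ and combine it with (b) to obtain $\la\alpha_{\theta, \lambda}^+, Z_{\theta, \lambda} g\ra = \frac{\nu}{\lambda^2}a_2^+$ via \eqref{eq:ap-eigen}.

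The remaining contributions must be shown to be $o\bigl(\lambda^{-2}|t|^{-N/(2(N-6))}\bigr)$. Group (a) is $\lesssim (|\theta'|+|\lambda'/\lambda|)\lambda^{-2}\|g\|_\cE \lesssim |t|^{-1}\lambda^{-2}|t|^{-(N-1)/(2(N-6))}$ by \eqref{eq:param-all} and \eqref{eq:bootstrap-g}. For (d), the rescaling $y = x/\lambda$ and the decay of $W$ yield $\la\alpha_{\theta, \lambda}^+, \eee^{i\zeta}W_\mu\ra, \la\alpha_{\theta, \lambda}^+, \eee^{i\zeta}\Lambda W_\mu\ra = O(\lambda^{(N-2)/2})$, which combined with \eqref{eq:mod-zeta}--\eqref{eq:mod-mu} is comfortably below the target. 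The residual nonlinear part
\begin{equation*}
R := f(\eee^{i\zeta}W_\mu+\eee^{i\theta}W_\lambda+g) - f(\eee^{i\zeta}W_\mu) - f(\eee^{i\theta}W_\lambda) - f'(\eee^{i\theta}W_\lambda)g
\end{equation*}
is split as $R = R_1 + R_2 + R_3$ with
\begin{gather*}
R_1 := f(\eee^{i\zeta}W_\mu+\eee^{i\theta}W_\lambda) - f(\eee^{i\zeta}W_\mu) - f(\eee^{i\theta}W_\lambda), \\
R_2 := \bigl(f'(\eee^{i\zeta}W_\mu+\eee^{i\theta}W_\lambda) - f'(\eee^{i\theta}W_\lambda)\bigr)g,
\end{gather*}
and $R_3$ the quadratic-in-$g$ remainder. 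Pairings $\la\alpha^+, iR_1\ra$ and $\la\alpha^+, iR_2\ra$ are controlled by estimates of the same type as \eqref{eq:B3-estim-2}--\eqref{eq:B3-estim-31}; for $R_3$ I use \eqref{eq:pointwise-1}, the lower bound $|\eee^{i\zeta}W_\mu+\eee^{i\theta}W_\lambda| \gtrsim W_\lambda$ coming from \eqref{eq:bootstrap-zeta} and \eqref{eq:bootstrap-theta}, and the fact that $\|\alpha_{\theta, \lambda}^+\|_{L^{2N/(N+2)}}$ is uniformly bounded in $\lambda$ by scaling, so that H\"older and Sobolev give $|\la\alpha^+, iR_3\ra| \lesssim \|g\|_\cE^{(N+2)/(N-2)}$.

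The main obstacle is a bookkeeping one: each group must improve on the target rate $\lambda^{-2}|t|^{-N/(2(N-6))}$ by a factor $|t|^{-\varepsilon}$ with $\varepsilon > 0$, so that the resulting constant $c$ can be made arbitrarily small by taking $|T_0|$ large. The tightest check is for $R_3$, where the weight $\lambda^{-2}\sim|t|^{4/(N-6)}$ must be absorbed by a quantity quadratic in $g$; this closes because $N \geq 7$ keeps the Sobolev exponent $(N+2)/(N-2)$ close enough to $2$ that the gain from $\|g\|_\cE \leq |t|^{-(N-1)/(2(N-6))}$ dominates the weight. No new ingredient beyond Lemma~\ref{lem:pointwise}, the eigenfunction identities \eqref{eq:ap-eigen}--\eqref{eq:proper-LW}, and the already-proven bounds of Lemma~\ref{lem:mod} enters the argument.
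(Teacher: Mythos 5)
Your overall strategy is the paper's: differentiate $a_j^\pm$, expand $\partial_t g$ via \eqref{eq:dtg}, use \eqref{eq:proper-iW}--\eqref{eq:proper-LW} to kill the same-bubble modulation terms, extract the eigenvalue term from \eqref{eq:ap-eigen}, and estimate the remaining cross terms and the nonlinear remainder by reusing the bounds established for $B_1$--$B_4$ in Lemma~\ref{lem:mod} (the paper handles your $R_2$ by the proof of \eqref{eq:B3-estim-2}, your $R_3$ by the proof of \eqref{eq:B4-estim-1}, and your $R_1$ by a direct $L^1$--$L^\infty$ pairing using $\|\lambda^2\alpha^+_{\theta,\lambda}\|_{L^1}\lesssim\lambda^{\frac{N+2}{2}}$). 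However, your treatment of $R_3$, which you yourself identify as the tightest step, contains a genuine error in the H\"older bookkeeping. From $|R_3|\lesssim|g|^{\frac{N+2}{N-2}}$ (or $\lesssim W_\lambda^{-\frac{N-6}{N-2}}|g|^2$) one only controls $R_3$ in $L^{\frac{2N}{N+2}}$, whose H\"older dual is $L^{\frac{2N}{N-2}}$; the quantity you invoke, $\|\alpha^+_{\theta,\lambda}\|_{L^{\frac{2N}{N+2}}}\lesssim 1$, pairs against $L^{\frac{2N}{N-2}}$ and hence cannot be used here, while $\|\alpha^+_{\theta,\lambda}\|_{L^{\frac{2N}{N-2}}}\sim\lambda^{-2}$. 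The correct conclusion is therefore $|\la\alpha^+_{\theta,\lambda},iR_3\ra|\lesssim\lambda^{-2}\|g\|_\cE^2$ (equivalently $\lambda^2|\la\alpha^+_{\theta,\lambda},iR_3\ra|\lesssim\|g\|_\cE^2$, which is what the proof of \eqref{eq:B4-estim-1} gives), not $\|g\|_\cE^{\frac{N+2}{N-2}}$ without the $\lambda^{-2}$.

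This matters because your closing paragraph argues that the $\lambda^{-2}$ weight ``must be absorbed'' by the quadratic-in-$g$ quantity so as to beat the unweighted rate $|t|^{-\frac{N}{2(N-6)}}$. With the correct bound this is false in low dimensions: $\lambda^{-2}\|g\|_\cE^2\lesssim|t|^{-\frac{N-5}{N-6}}$, and $\frac{N-5}{N-6}>\frac{N}{2(N-6)}$ only when $N>10$, so for $7\le N\le 10$ the unweighted target is not met. The step nevertheless closes, but for a different reason, which is the one the paper uses: the right-hand side of \eqref{eq:proper-2p} carries the factor $\frac{1}{\lambda(t)^2}$, so it suffices to show $\lambda^2$ times each error term is $\ll|t|^{-\frac{N}{2(N-6)}}$; then $\|g\|_\cE^2\le|t|^{-\frac{N-1}{N-6}}$ does the job with room to spare. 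You should restate your $R_3$ estimate in this weighted form (and similarly note that if you control $R_1$ ``as in \eqref{eq:B3-estim-31}'' you still owe a bound on $\la\alpha^+_{\theta,\lambda},if'(\eee^{i\theta}W_\lambda)(\eee^{i\zeta}W_\mu)\ra$, which is $O(\lambda^{\frac{N-2}{2}-2})$ and harmless after multiplying by $\lambda^2$; the paper avoids this by bounding $R_1$ crudely in $L^\infty$ by $\lambda^{-2}$ against the $L^1$ norm of $\lambda^2\alpha^+_{\theta,\lambda}$).
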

\begin{proof}
  We will give a proof of \eqref{eq:proper-1p} and \eqref{eq:proper-2p}, the other two inequalities being analogous.

  Applying the chain rule to the formula $a_1^+(t) = \la \alpha_{\zeta(t), \mu(t)}^+, g(t)\ra$ and using the definition of $\alpha_{\zeta, \mu}^+$ 
  we obtain
  \begin{equation}
    \dd t a_1^+ = -\frac{\mu'}{\mu}\big\la\frac{\eee^{i\zeta}}{\mu^2}\big(\Lambda_{-1}\cY_\mu^{(2)} + i\Lambda_{-1}\cY_\mu^{(1)}\big), g\big\ra
    + \zeta'\big\la \frac{\eee^{i\zeta}}{\mu^2}\big(i\cY_\mu^{(2)} - \cY_\mu^{(1)}\big), g\big\ra + \la \alpha_{\zeta, \mu}^+, \partial_t g\ra.
  \end{equation}
  Thanks to \eqref{eq:param-all} and \eqref{eq:bootstrap-g},
  the size of the first two terms is $\lesssim |t|^{-1}|t|^{-\frac{N-1}{2(N-6)}} = |t|^{-\frac{3N-13}{2(N-6)}} \ll |t|^{-\frac{N}{2(N-6)}}$.
  We are left with the third term, and we expand $\partial_t g$ according to \eqref{eq:dtg}.

  Let us consider, on by one, the contributions of the four terms in the second line of \eqref{eq:dtg}.
  \begin{enumerate}[1.]
    \item The term $\big\la \alpha_{\zeta, \mu}^+, -\zeta'i\eee^{i\zeta}W_\mu\big\ra$ is equal to $0$ thanks to \eqref{eq:proper-iW}.
    \item The term $\big\la \alpha_{\zeta, \mu}^+, \frac{\mu'}{\mu}\eee^{i\zeta}\Lambda W_\mu\big\ra$ is equal to $0$ thanks to \eqref{eq:proper-LW}.
    \item Consider the term $\big\la \alpha_{\zeta, \mu}^+, -\theta'i\eee^{i\theta}W_\lambda\big\ra$. We have $\|\alpha_{\zeta, \mu}^+\|_{\dot H^1} \lesssim 1$,
      hence
      \begin{equation}
        \big|\big\la \alpha_{\zeta, \mu}^+, -\theta'i\eee^{i\theta}W_\lambda\big\ra\big| \lesssim |\theta'|\|\alpha_{\zeta, \mu}^+\|_{\dot H^1}\|W_\lambda\|_{\dot H^{-1}}
        \lesssim |\theta'|\lambda^2,
      \end{equation}
      and \eqref{eq:param-all} yields $|\theta'|\lambda^2 \lesssim |t|^{-1}|t|^{-\frac{4}{N-6}} = |t|^{-\frac{N-2}{N-6}} \ll |t|^{-\frac{N}{2(N-6)}}$.
    \item The term $\big\la \alpha_{\zeta, \mu}^+, \frac{\lambda'}{\lambda}\eee^{i\theta}\Lambda W_\lambda\big\ra$ is treated as the previous one,
      using $\big|\frac{\lambda'}{\lambda}\big| \lesssim |t|^{-1}$ instead of $|\theta'| \lesssim |t|^{-1}$.
  \end{enumerate}

  Let us finally consider the contribution of the first line of \eqref{eq:dtg}.
  We have
  \begin{equation}
    \begin{gathered}
    i\Delta g + i\big(f(\eee^{i\zeta}W_{\mu} + \eee^{i\theta}W_{\lambda} + g) - f(\eee^{i\zeta}W_\mu) - f(\eee^{i\theta}W_\lambda)\big) = \\
    = Z_{\zeta, \mu}g + i\big(f(\eee^{i\zeta}W_{\mu} + \eee^{i\theta}W_{\lambda} + g) - f(\eee^{i\zeta}W_\mu) - f(\eee^{i\theta}W_\lambda) - f'(\eee^{i\zeta}W_\mu)g\big).
  \end{gathered}
  \end{equation}
  From \eqref{eq:ap-eigen} we obtain $\la\alpha_{\zeta, \mu}^+, Z_{\zeta, \mu}g\ra = \frac{\nu}{\mu^2}a_1^+$, hence we need to show that
  \begin{equation}
    \big|\big\la\alpha_{\zeta, \mu}^+, i\big(f(\eee^{i\zeta}W_{\mu} + \eee^{i\theta}W_{\lambda} + g) - f(\eee^{i\zeta}W_\mu) - f(\eee^{i\theta}W_\lambda) - f'(\eee^{i\zeta}W_\mu)g\big)\big\ra\big| \ll |t|^{-\frac{N}{2(N-6)}}.
  \end{equation}
The proof of \eqref{eq:B1-estim} yields the bound $|t|^{-\frac{N-2}{N-6}} \ll |t|^{-\frac{N}{2(N-6)}}$.

We turn to the proof of \eqref{eq:proper-2p}.
  Applying the chain rule to the formula $a_2^+(t) = \la \alpha_{\zeta(t), \mu(t)}^+, g(t)\ra$ and using the definition of $\alpha_{\theta, \lambda}^+$ 
  we obtain
  \begin{equation}
    \dd t a_2^+ = -\frac{\lambda'}{\lambda}\big\la\frac{\eee^{i\theta}}{\lambda^2}\big(\Lambda_{-1}\cY_\lambda^{(2)} + i\Lambda_{-1}\cY_\lambda^{(1)}\big), g\big\ra
    + \theta'\big\la \frac{\eee^{i\theta}}{\lambda^2}\big(i\cY_\lambda^{(2)} - \cY_\lambda^{(1)}\big), g\big\ra + \la \alpha_{\theta, \lambda}^+, \partial_t g\ra.
  \end{equation}
  The first two terms are treated as in the case of $a_1^+$. In the third term, we expand $\partial_t g$ using \eqref{eq:dtg}.
  Let us consider, on by one, the contributions of the four terms in the second line of \eqref{eq:dtg}.
  \begin{enumerate}[1.]
    \item In order to bound the term $\big\la \alpha_{\theta, \lambda}^+, -\zeta'i\eee^{i\zeta}W_\mu\big\ra$, notice that
      \begin{equation}
        \|\alpha_{\theta, \lambda}^+\|_{L^1} \lesssim \int_{\bR^N}\frac{1}{\lambda^2}\big(|\cY_\lambda^{(1)}| + |\cY_\lambda^{(2)}|\big)\ud x \lesssim \lambda^\frac{N-2}{2} \lesssim |t|^{-\frac{N-2}{N-6}} \ll |t|^{-\frac{N}{2(N-6)}}.
      \end{equation}
      This is sufficient since $\|{-}\zeta'i\eee^{i\zeta}W_\mu\|_{L^\infty} \lesssim 1$.
    \item The term $\big\la \alpha_{\theta, \lambda}^+, \frac{\mu'}{\mu}\eee^{i\zeta}\Lambda W_\mu\big\ra$ is analogous.
    \item The term $\big\la \alpha_{\theta, \lambda}^+, -\theta'i\eee^{i\theta}W_\lambda\big\ra$ is equal to $0$ thanks to \eqref{eq:proper-iW}.
    \item The term $\big\la \alpha_{\theta, \lambda}^+, \frac{\lambda'}{\lambda}\eee^{i\theta}\Lambda W_\lambda\big\ra$ is equal to $0$ thanks to \eqref{eq:proper-LW}.
  \end{enumerate}

  Let us finally consider the contribution of the first line of \eqref{eq:dtg}.
  We have
  \begin{equation}
    \begin{gathered}
    i\Delta g + i\big(f(\eee^{i\zeta}W_{\mu} + \eee^{i\theta}W_{\lambda} + g) - f(\eee^{i\zeta}W_\mu) - f(\eee^{i\theta}W_\lambda)\big) = \\
    = Z_{\theta, \lambda}g + i\big(f(\eee^{i\zeta}W_{\mu} + \eee^{i\theta}W_{\lambda} + g) - f(\eee^{i\zeta}W_\mu) - f(\eee^{i\theta}W_\lambda) - f'(\eee^{i\theta}W_\lambda)g\big).
  \end{gathered}
  \end{equation}
  From \eqref{eq:ap-eigen} we obtain $\la\alpha_{\theta, \lambda}^+, Z_{\theta, \lambda}g\ra = \frac{\nu}{\lambda^2}a_2^+$, hence we need to show that
  \begin{equation}
    \label{eq:a2p-final}
    \lambda^2\big|\big\la\alpha_{\theta, \lambda}^+, i\big(f(\eee^{i\zeta}W_{\mu} + \eee^{i\theta}W_{\lambda} + g) - f(\eee^{i\zeta}W_\mu) - f(\eee^{i\theta}W_\lambda) - f'(\eee^{i\theta}W_\lambda)g\big)\big\ra\big| \ll |t|^{-\frac{N}{2(N-6)}}.
  \end{equation}
  The proof of \eqref{eq:B3-estim-2} yields
  \begin{equation}
    \label{eq:a2p-final1}
    \begin{gathered}
    \lambda^2\big|\big\la\alpha_{\theta, \lambda}^+, i(f'(\eee^{i\zeta}W_\mu + \eee^{i\theta}W_\lambda)-f'(\eee^{i\theta}W_\lambda))g\big\ra\big| \lesssim \lambda^\frac N4\|g\|_\cE \\
    \lesssim |t|^{-\frac{N}{2(N-6)} - \frac{N-1}{2(N-6)}} \ll |t|^{-\frac{N}{2(N-6)}}.
  \end{gathered}
  \end{equation}
  The proof of \eqref{eq:B4-estim-1} yields
  \begin{equation}
    \label{eq:a2p-final2}
    \lambda^2\big|\big\la\alpha_{\theta, \lambda}^+, i\big(f(\eee^{i\zeta}W_\mu + \eee^{i\theta}W_\lambda + g) - f(\eee^{i\zeta}W_\mu + \eee^{i\theta}W_\lambda)
    - f'(\eee^{i\zeta}W_\mu + \eee^{i\theta}W_\lambda)g\big)\big\ra\big| \lesssim \|g\|_\cE^2 \ll |t|^{-\frac{N}{2(N-6)}}.
  \end{equation}
  Using \eqref{eq:pointwise-2} we get
  \begin{equation}
    \|f(\eee^{i\zeta}W_\mu + \eee^{i\theta}W_\lambda) - f(\eee^{i\zeta}W_\mu) - f(\eee^{i\theta}W_\lambda)\|_{L^\infty} \lesssim \|W_\lambda^\frac{4}{N-2}W_\mu\|_{L^\infty} \lesssim \frac{1}{\lambda^2}.
  \end{equation}
  By a change of variable, $\|\lambda^2 \alpha_{\theta, \lambda}^+\|_{L^1} \lesssim \lambda^\frac{N+2}{2}$, hence
  \begin{equation}
    \label{eq:a2p-final3}
    \lambda^2\big|\big\la\alpha_{\theta, \lambda}^+, i\big(f(\eee^{i\zeta}W_\mu + \eee^{i\theta}W_\lambda) - f(\eee^{i\zeta}W_\mu) - f(\eee^{i\theta}W_\lambda)\big)\big\ra\big| \lesssim \lambda^{\frac{N-2}{2}} \lesssim |t|^{-\frac{N-2}{N-6}} \ll |t|^{-\frac{N}{2(N-6)}}.
  \end{equation}
  Taking the sum of \eqref{eq:a2p-final1}, \eqref{eq:a2p-final2} and \eqref{eq:a2p-final3} and using the triangle inequality, we obtain \eqref{eq:a2p-final}.
\end{proof}

\section{Bootstrap}
\label{sec:boot}
We turn to the heart of the proof, which consists in establishing bootstrap estimates.
We consider a solution $u(t)$, decomposed according to \eqref{eq:decompose}, \eqref{eq:param-rough} and \eqref{eq:orth}.
The initial data at time $T \leq T_0$ is chosen as follows.
\begin{lemma}
  \label{lem:initial}
  There exists $T_0 < 0$ such that for all $T \leq T_0$ and for all $\lambda^0$, $a_1^0$, $a_2^0$
  satisfying
  \begin{equation}
    \label{eq:initial-assum}
    \big|\lambda^0 - \frac{1}{\kappa}(\kappa|T|)^{-\frac{2}{N-6}}\big| \leq \frac 12 |T|^{-\frac{5}{2(N-6)}},\qquad |a_1^0| \leq \frac 12 |T|^{-\frac{N}{2(N-6)}},\qquad |a_2^0| \leq \frac 12 |T|^{-\frac{N}{2(N-6)}},
  \end{equation}
  there exists $g^0 \in X^1$ satisfying
  \begin{gather}
    \label{eq:initial-orth}
    \la \Lambda W, g^0\ra = \la iW, g^0\ra = \la i\Lambda W_{\lambda^0}, g^0\ra = \la {-}W_{\lambda^0}, g^0\ra = 0, \\
    \label{eq:initial-unstable}
    \la \alpha_{-\frac{\pi}{2},1}^-, g^0\ra = 0,\quad \la \alpha_{-\frac{\pi}{2},1}^+, g^0\ra = a_1^0,\quad 
\la \alpha_{0,\lambda^0}^-, g^0\ra = 0,\quad \la \alpha_{0, \lambda^0}^+, g^0\ra = a_2^0, \\
    \label{eq:initial-size}
    \|g^0\|_{\cE} \lesssim |T|^{-\frac{N}{2(N-6)}}.
  \end{gather}
  This $g^0$ is continuous for the $X^1$ topology with respect to $\lambda^0$, $a_1^0$ and $a_2^0$.
\end{lemma}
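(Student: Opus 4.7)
The plan is to reduce the construction to a finite-dimensional linear problem and solve it via Riesz representation in $\cE$.

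Each of the eight linear conditions on $g^0$ has the form $\la \phi, g^0\ra = b_\phi$, where the eight functions $\phi$ are $\Lambda W$, $iW$, $i\Lambda W_{\lambda^0}$, $-W_{\lambda^0}$, $\alpha^\pm_{-\pi/2, 1}$, $\alpha^\pm_{0, \lambda^0}$, and $b_\phi$ is the prescribed right-hand side (so $b_\phi = a_1^0$ or $a_2^0$ for two of them, and zero for the other six). All eight $\phi$ lie in $L^2$ since $N \geq 7$. It is therefore enough to exhibit two functions $\psi_1, \psi_2 \in X^1$ of $\cE$-norm $O(1)$ such that each $\psi_j$ satisfies the six vanishing conditions, $\la \alpha^+_{-\pi/2, 1}, \psi_1\ra = 1 + o(1)$, $\la \alpha^+_{0, \lambda^0}, \psi_2\ra = 1 + o(1)$, and the $2\times 2$ cross-pairing is $o(1)$; then $g^0 = c_1 \psi_1 + c_2 \psi_2$ with $(c_1, c_2)$ solving a small perturbation of $(a_1^0, a_2^0)$ yields \eqref{eq:initial-size}.

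To build $\psi_2$, I would start from its Riesz representative $\tilde\psi_2 := (-\Delta)^{-1}\alpha^+_{0, \lambda^0}$. Using the scaling relation $\|v_{\lambda}\|_{\dot H^{-1}} = \lambda^2 \|v\|_{\dot H^{-1}}$ together with the explicit form \eqref{eq:alpha} of $\alpha^+_{\theta, \lambda}$, one checks $\|\alpha^+_{0, \lambda^0}\|_{\dot H^{-1}} = O(1)$, hence $\|\tilde\psi_2\|_\cE = O(1)$. I would then subtract off its $\cE$-orthogonal projection onto the 7-dimensional subspace corresponding to the vanishing conditions (spanned by the Riesz representatives of the other seven functionals). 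An analogous construction starting from $(-\Delta)^{-1}\alpha^+_{-\pi/2, 1}$ produces $\psi_1$. The key point is scale separation: the cross-block $\cE$-inner products are pairings of the form $\la \varphi, (-\Delta)^{-1}\tilde\varphi_{\lambda^0}\ra_{L^2}$, which decay polynomially in $\lambda^0$ by a change of variable. This guarantees that (i) the subspace of vanishing directions is non-degenerate with well-conditioned Gram matrix uniformly in $\lambda^0$ (thanks also to \eqref{eq:proper-iW}, \eqref{eq:proper-LW}, \eqref{eq:Y1Y2-prod}, and the linear independence of $\Lambda W, iW$); (ii) the projection correction applied to $\tilde\psi_2$ is $o(1)$ in $\cE$; (iii) the cross-pairing $\la \alpha^+_{-\pi/2, 1}, \psi_2\ra$ and $\la \alpha^+_{0, \lambda^0}, \psi_1\ra$ are $o(1)$ as $\lambda^0 \to 0$.

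The regularity $g^0 \in X^1$ is immediate because each Riesz representative of a Schwartz or $\dot H^2$-type radial function lies in $X^1$. Continuous dependence on $\lambda^0, a_1^0, a_2^0$ follows from continuity of the Riesz representatives in $\lambda^0$ in the $X^1$ topology, continuity of the $7 \times 7$ Gram matrix together with the uniform bound on its inverse, and linearity in $a_1^0, a_2^0$. I expect the main technical obstacle to be the clean verification of the uniform scale-separation estimate, i.e.\ quantitative non-degeneracy of the Gram matrix of the seven vanishing directions as $\lambda^0 \to 0$; this amounts to a family of cross-scale scaling computations of the same nature as those appearing throughout the proof of Lemma~\ref{lem:mod}.
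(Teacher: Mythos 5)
There is a genuine gap in your construction of $\psi_1,\psi_2$. You claim that after subtracting from $\tilde\psi_2=(-\Delta)^{-1}\alpha^+_{0,\lambda^0}$ its $\cE$-orthogonal projection onto the span of the Riesz representatives of the other seven functionals, the correction is $o(1)$ in $\cE$, so that $\la\alpha^+_{0,\lambda^0},\psi_2\ra=1+o(1)$. This is false as stated: three of the functionals you project out, namely $i\Lambda W_{\lambda^0}$, $-W_{\lambda^0}$ and $\alpha^-_{0,\lambda^0}$, live at the \emph{same} scale $\lambda^0$ as $\alpha^+_{0,\lambda^0}$, and the inner products your projection argument actually uses are $\dot H^{-1}$ pairings, which for same-scale pairs are scale-invariant constants with no reason to be small. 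For instance
\begin{equation}
\big\la (-\Delta)^{-1}\alpha^+_{0,\lambda^0},\,(-\Delta)^{-1}\alpha^-_{0,\lambda^0}\big\ra_{\cE}
=\big\la \alpha^+_{0,\lambda^0},\alpha^-_{0,\lambda^0}\big\ra_{\dot H^{-1}}
=\|\cY^{(2)}\|_{\dot H^{-1}}^2-\|\cY^{(1)}\|_{\dot H^{-1}}^2,
\end{equation}
which is generically nonzero; the orthogonality relations \eqref{eq:proper-iW}, \eqref{eq:proper-LW} and the positivity \eqref{eq:Y1Y2-prod} that you invoke are statements about the $L^2$ pairing and do not transfer to the $\dot H^{-1}$ Gram matrix. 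Hence the projection correction is $O(1)$, not $o(1)$. What your scheme really gives for the diagonal entry is $\la\alpha^+_{0,\lambda^0},\psi_2\ra=\dist_\cE(\tilde\psi_2,V)^2$, and closing the argument requires a \emph{uniform lower bound} on this distance, i.e.\ quantitative $\bR$-linear independence in $\dot H^{-1}$, at a single scale, of $\cY^{(2)}+i\cY^{(1)}$ from $\spn\{i\Lambda W,\,W,\,\cY^{(2)}-i\cY^{(1)}\}$ (this is true, e.g.\ because \eqref{eq:Y1Y2} forces $\cY^{(1)}$ not to be a multiple of $\Lambda W$ and $\cY^{(2)}$ not to be a multiple of $W$), combined with the cross-scale smallness you do sketch. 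This same-scale non-degeneracy of the $\dot H^{-1}$ Gram blocks is the crux of your route and is missing; the peripheral points ($X^1$ regularity, the bound \eqref{eq:initial-size}, continuity in $(\lambda^0,a_1^0,a_2^0)$, and the polynomial decay of cross-scale pairings) are fine.

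For comparison, the paper avoids the $\dot H^{-1}$ Gram matrix altogether: it takes the explicit eight-parameter ansatz
$g^0=a_1^+i\alpha^-_{-\frac{\pi}{2},1}-a_1^-i\alpha^+_{-\frac{\pi}{2},1}+b_1W+c_1(-i\Lambda W)+a_2^+(\lambda^0)^2i\alpha^-_{0,\lambda^0}-a_2^-(\lambda^0)^2i\alpha^+_{0,\lambda^0}+b_2iW_{\lambda^0}+c_2\Lambda W_{\lambda^0}$
and shows that the $8\times8$ matrix of $L^2$ pairings sending these coefficients to the eight prescribed functionals is strictly diagonally dominant; the multiplication by $i$ is exactly what makes \eqref{eq:proper-iW}, \eqref{eq:proper-LW} kill the off-diagonal same-scale entries and \eqref{eq:Y1Y2-prod} make the diagonal ones bounded below, while the cross-scale entries are small by scaling. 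To repair your proof, either adopt that ansatz, or prove uniform invertibility of the full $8\times8$ $\dot H^{-1}$ Gram matrix of the Riesz representatives (same-scale blocks by linear independence of $W,\Lambda W,\cY^{(1)},\cY^{(2)}$, cross-scale blocks by your scaling computation) and solve for $g^0$ in their span; as written, the decisive quantitative step is asserted but not established.
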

\begin{remark}
  For the continuity, we just claim that the function $g^0$ constructed in the proof
  is continuous with respect to $\lambda^0$, $a_1^0$ and $a_2^0$.
  Clearly, $g^0$ is not uniquely determined by \eqref{eq:initial-orth}, \eqref{eq:initial-unstable} and \eqref{eq:initial-size}.
\end{remark}
\begin{remark}
  Condition \eqref{eq:initial-orth} is exactly \eqref{eq:orth} with $\big(\zeta, \mu, \theta, \lambda\big) = \big(-\frac{\pi}{2}, 1, 0, \lambda^0\big)$.
  Hence, if we consider the solution $u(t)$ of \eqref{eq:nls} with initial data $u(T) = -iW + W_{\lambda^0} + g^0$
  and decompose it according to \eqref{eq:decompose}, then $g(T) = g^0$ and the initial values of the modulation parameters are
  $\big(\zeta(T), \mu(T), \theta(T), \lambda(T)\big) = \big({-}\frac{\pi}{2}, 1, 0, \lambda^0\big)$.
\end{remark}
\begin{proof}
  We consider functions of the form
  \begin{equation}
    g^0 = a_1^+i\alpha_{-\frac{\pi}{2}, 1}^- - a_1^- i\alpha_{-\frac{\pi}{2}, 1}^+ + b_1 W + c_1 (-i\Lambda W) + a_2^+(\lambda^0)^2i\alpha_{0, \lambda^0}^- - a_2^-(\lambda^0)^2i\alpha_{0, \lambda^0}^+ + b_2 iW_{\lambda^0} + c_2 \Lambda W_{\lambda^0},
  \end{equation}
  with $a_1^+$, $a_1^-$, $b_1$, $c_1$, $a_2^+$, $a_2^-$, $b_2$, $c_2$ being real numbers.
  Let $\Phi: \bR^8 \to \bR^8$ be the linear map defined as follows:
  \begin{equation}
  \begin{gathered}
    \Phi(a_1^+, a_1^-, b_1, c_1, a_2^+, a_2^-, b_2, c_2) := \\
    \big(\la \alpha_{-\frac{\pi}{2}, 1}^+, g^0\ra, \la \alpha_{-\frac{\pi}{2}, 1}^-, g^0\ra, \la \Lambda W, g^0\ra, \la iW, g^0\ra, \\
    \la \alpha_{0, \lambda^0}^+, g^0\ra, \la \alpha_{0, \lambda^0}^-, g^0\ra, \big\la (\lambda^0)^{-2}i\Lambda W_{\lambda^0}, g^0\big\ra, \big\la {-}(\lambda^0)^{-2}W_{\lambda^0}, g^0\big\ra\big).
    \end{gathered}
  \end{equation}
  Using \eqref{eq:proper-iW}, \eqref{eq:proper-LW}, \eqref{eq:Y1Y2-prod} and the fact that $\lambda^0$ is small
  we obtain that the matrix of $\Phi$ is strictly diagonally dominant, which implies the result.
\end{proof}
In the remaining part of this section, we will analyze solutions $u(t)$ of \eqref{eq:nls}
with the initial data $u(T) = -iW + W_{\lambda^0} + g^0$,
where $g^0$ is given by the previous lemma. \begin{proposition}
  \label{prop:bootstrap}
  There exists $T_0 <0$ with the following property.
  Let $T < T_1 < T_0$ and let $\lambda^0, a_1^0, a_2^0$ satisfy \eqref{eq:initial-assum}.
  Let $g^0 \in X^1$ be given by Lemma~\ref{lem:initial} and consider the solution $u(t)$ of \eqref{eq:nls}
  with the initial data $u(T) = -iW + W_{\lambda^0} + g^0$.
  Suppose that $u(t)$ exists on the time interval $[T, T_1]$, that for $t \in [T, T_1]$
  conditions \eqref{eq:bootstrap-zeta}, \eqref{eq:bootstrap-mu}, \eqref{eq:bootstrap-theta}, \eqref{eq:bootstrap-lambda}
  and \eqref{eq:bootstrap-g} hold, and moreover that
\begin{equation}
  \label{eq:bootstrap-unstable}
  |a_1^+(t)| \leq |t|^{-\frac{N}{2(N-6)}},\qquad |a_2^+(t)| \leq |t|^{-\frac{N}{2(N-6)}}.
  \end{equation}
  Then for $t \in [T, T_1]$ there holds
  \begin{align}
    \big|\zeta(t) + \frac{\pi}{2}\big| &\leq \frac 12 |t|^{-\frac{3}{N-6}}, \label{eq:bootstrap-better-zeta} \\
    |\mu(t) - 1| &\leq \frac 12 |t|^{-\frac{3}{N-6}},  \label{eq:bootstrap-better-mu} \\
    |\theta(t)| &\leq \frac 12 |t|^{-\frac{1}{N-6}}, \label{eq:bootstrap-better-theta} \\
    \|g(t)\|_\cE &\leq \frac 12 |t|^{-\frac{N-1}{2(N-6)}}. \label{eq:bootstrap-better-g}
  \end{align}
\end{proposition}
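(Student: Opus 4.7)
The plan is to improve, in turn, the bounds on $\zeta$, $\mu$, $\theta$, and $g$, exploiting the modulation equations of Lemma~\ref{lem:mod} together with conservation of energy. The natural order is: $\zeta$ and $\mu$ first (trivial), then $\theta$ (the critical step, requiring a virial correction), then $g$ (via energy conservation using the improved $\theta$).

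\medskip\noindent\textbf{Step 1 (improving $\zeta$ and $\mu$).} By Lemma~\ref{lem:initial} the initial values are $\zeta(T)=-\pi/2$ and $\mu(T)=1$. Direct integration of \eqref{eq:mod-zeta} and \eqref{eq:mod-mu} from $T$ to $t$ gives
\[
\big|\zeta(t)+\tfrac{\pi}{2}\big| + |\mu(t)-1| \lesssim c\int_{T}^{t} |s|^{-\frac{N-3}{N-6}}\ud s \lesssim c\,|t|^{-\frac{3}{N-6}},
\]
since $(N-3)/(N-6) > 1$ makes the integral converge at $-\infty$. Taking $|T_0|$ large enough that $c$ is small yields \eqref{eq:bootstrap-better-zeta} and \eqref{eq:bootstrap-better-mu}.

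\medskip\noindent\textbf{Step 2 (improving $\theta$).} A naive integration of \eqref{eq:mod-th} from $\theta(T)=0$ fails: the term $K/(\lambda^2\|W\|_{L^2}^2)$ has size $|t|^{-(N-5)/(N-6)}$ with a non-small absolute constant in front, which integrates only to $|\theta(t)| \leq C\,|t|^{-1/(N-6)}$ with $C$ uncontrolled. To gain a small constant, I would introduce a \emph{localized virial correction} $b(t)$, quadratic in $g(t)$ and localized on the scale $\lambda(t)$, designed so that along the flow \eqref{eq:dtg},
\[
b'(t) = -\frac{K(t)}{\lambda(t)^2\|W\|_{L^2}^2} + O\bigl(c\,|t|^{-(N-5)/(N-6)}\bigr), \qquad |b(t)| \ll |t|^{-1/(N-6)}.
\]
Setting $\tilde\theta := \theta + b$ then gives
\[
\tilde\theta'(t) + \frac{(N-2)\kappa^{(N-4)/2}}{N-6}\theta(t)\,\lambda(t)^{(N-6)/2} = O\bigl(c\,|t|^{-(N-5)/(N-6)}\bigr),
\]
which integrates, using $\tilde\theta(T)$ small and the bootstrap on $\theta\lambda^{(N-6)/2}$, to $|\tilde\theta(t)| \leq c\,|t|^{-1/(N-6)}$; subtracting the smaller $b$ yields $|\theta(t)| \leq \tfrac12\,|t|^{-1/(N-6)}$. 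I expect the construction of $b$ and the verification of the cancellation identity for $b'(t)$---computing the time derivative of a weighted quadratic form in $g$ via \eqref{eq:dtg}, integrating by parts, and matching the outcome term-by-term with $K(t)$ from \eqref{eq:K-def}---to be the main obstacle of the proof.

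\medskip\noindent\textbf{Step 3 (improving $g$ by energy conservation).} Applying \eqref{eq:coer-bound} at $t=T$, where $(\zeta,\mu,\theta)=(-\pi/2,1,0)$ and $\|g^0\|_\cE\lesssim|T|^{-N/(2(N-6))}$ by Lemma~\ref{lem:initial}, yields $|E(u(T))-2E(W)|\lesssim|T|^{-N/(N-6)}$. Energy conservation then propagates this bound: $|E(u(t))-2E(W)|\lesssim|t|^{-N/(N-6)}$ for all $t\in[T,T_1]$. Next, the stable-mode equations \eqref{eq:proper-1m} and \eqref{eq:proper-2m}, combined with $a_j^-(T)=0$ and the strong damping factor $\nu/\lambda(t)^2$, yield via Duhamel
\[
|a_j^-(t)| \lesssim c\,|t|^{-N/(2(N-6))},
\]
so together with the bootstrap assumption \eqref{eq:bootstrap-unstable} on $a_j^+$ we have $\sum_{j,\pm}(a_j^\pm)^2 \lesssim |t|^{-N/(N-6)}$. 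Inserting these estimates and the improved $\theta$-bound from Step~2 into \eqref{eq:coer-conclusion} of Proposition~\ref{prop:coercivity} produces
\[
\|g(t)\|_\cE^2 \leq C\,|t|^{-\frac{N}{N-6}} + C_0\,|\theta(t)|\,\lambda(t)^{\frac{N-2}{2}} \lesssim |t|^{-\frac{N}{N-6}} + c\,|t|^{-\frac{N-1}{N-6}}.
\]
Since $|t|^{-N/(N-6)} = |t|^{-1/(N-6)}\cdot|t|^{-(N-1)/(N-6)}$ is small relative to $|t|^{-(N-1)/(N-6)}$ for $|T_0|$ large, the right-hand side is bounded by $\tfrac14\,|t|^{-(N-1)/(N-6)}$, giving \eqref{eq:bootstrap-better-g}.
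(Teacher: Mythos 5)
Your Steps 1 and 3 coincide with the paper's argument (your Duhamel treatment of the stable modes $a_j^-$ is an acceptable substitute for the paper's exit-time/sign argument), but Step 2 contains a genuine gap. A correction $b(t)$ that is quadratic in $g$ cannot satisfy the two-sided identity $b'(t)=-K(t)/(\lambda^2\|W\|_{L^2}^2)+O\big(c|t|^{-\frac{N-5}{N-6}}\big)$. Indeed, differentiating any such localized quadratic form along \eqref{eq:dtg} produces, besides the term reproducing $K$ (obtained by testing the nonlinearity against $A(\lambda)(\eee^{i\theta}W_\lambda)\approx\lambda^{-2}\eee^{i\theta}\Lambda W_\lambda$), a localized kinetic/potential quadratic form of the type $\lambda^{-2}\big(\int_{|x|\le R\lambda}|\grad g|^2-\la f'(\eee^{i\theta}W_\lambda)g,g\ra\big)$. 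This term has exactly the critical size $\|g\|_\cE^2/\lambda^2\sim|t|^{-\frac{N-5}{N-6}}$ with a non-small absolute constant; it cannot be cancelled by any choice of weight, and the only available control is on its \emph{sign}, via the localized coercivity \eqref{eq:coer-L-2} — which moreover requires the orthogonality conditions \eqref{eq:orth} together with the smallness of all four components $a_j^\pm$, so the stable-mode bound must already be in hand at this stage, not only in Step 3. Consequently the virial correction yields only a one-sided differential inequality, $\psi'(t)\ge -c\,|t|^{-\frac{N-5}{N-6}}$ for $\psi=\theta-\frac{1}{2\|W\|_{L^2}^2}\la g,iA_0(\lambda)g\ra$, hence only the lower bound $\theta(t)\gtrsim -c\,|t|^{-\frac{1}{N-6}}$; it cannot give the upper bound on $\theta$.

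The upper bound must come from a different mechanism, which is absent from your plan: by conservation of energy, the initial bound from \eqref{eq:coer-bound} at $t=T$, and the coercivity \eqref{eq:coer-conclusion} (in which $\theta$ enters linearly with a favorable sign, since $\|g\|_\cE^2\ge 0$), one gets $\theta(t)\lambda(t)^{\frac{N-2}{2}}\lesssim|t|^{-\frac{N}{N-6}}$, i.e. $\theta(t)\lesssim|t|^{-\frac{2}{N-6}}\ll|t|^{-\frac{1}{N-6}}$; this controls precisely the side that the virial cannot reach. A related inaccuracy: when integrating the corrected equation you propose to use ``the bootstrap on $\theta\lambda^{(N-6)/2}$'' to handle the linear term $\frac{(N-2)\kappa^{(N-4)/2}}{N-6}\theta\lambda^{\frac{N-6}{2}}$ in \eqref{eq:mod-th}, but the raw bootstrap bound $|\theta|\le|t|^{-\frac{1}{N-6}}$ only gives a contribution of size $|t|^{-\frac{N-5}{N-6}}$ with an $O(1)$ constant, which does not integrate to a small multiple of $|t|^{-\frac{1}{N-6}}$; this term is harmless only after the sign splitting (for $\theta>0$ use the energy upper bound just described, for $\theta<0$ it has the good sign in the one-sided inequality). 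With the energy-based upper bound and the one-sided virial inequality in place, your Step 3 closes as written.
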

Before we give a proof, we need a little preparation.
\subsection{A virial-type correction}
The delicate part of the proof of Proposition~\ref{prop:bootstrap} will be to control $\theta(t)$. For this, we will need to use a virial functional, which we now define.

\begin{lemma}
  \label{lem:fun-a}
  For any $c > 0$ and $R > 0$ there exists a radial function $q(x) = q_{c,R}(x) \in C^{3,1}(\bR^N)$ with the following properties:
  \begin{enumerate}[label=(P\arabic*)]
    \item $q(x) = \frac 12 |x|^2$ for $|x| \leq R$, \label{enum:approx}
    \item there exists $\wt R > 0$ (depending on $c$ and $R$) such that $q(x) \equiv \tx{const}$ for $|x| \geq \wt R$, \label{enum:support}
    \item $|\grad q(x)| \lesssim |x|$ and $|\Delta q(x)| \lesssim 1$ for all $x \in \bR^N$, with constants independent of $c$ and $R$, \label{enum:gradlap}
    \item $\sum_{1\leq j, k\leq N} \big(\partial_{x_j x_k} q(x)\big) \conj{v_j} v_k \geq -c\sum_{j=1}^N |v_j|^2$, for all $x \in \bR^N, v_j \in \bC$, \label{enum:convex}
    \item $\Delta^2 q(x) \leq c\cdot|x|^{-2}$, for all $x \in \bR^N$. \label{enum:bilapl}
  \end{enumerate}
\end{lemma}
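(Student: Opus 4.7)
The plan is to seek $q$ in radial form $q(x) = \psi(|x|)$. The Hessian of $q$ at $x \neq 0$ then has eigenvalue $\psi''(|x|)$ in the radial direction and $\psi'(|x|)/|x|$ with multiplicity $N-1$ tangentially, so \ref{enum:convex} reduces to the two scalar inequalities $\psi''(r) \geq -c$ and $\psi'(r)/r \geq -c$. Using $\Delta q = \psi'' + \tfrac{N-1}{r}\psi'$ and iterating, one obtains
\[
\Delta^2 q \;=\; \psi^{(4)} + \tfrac{2(N-1)}{r}\psi''' + \tfrac{(N-1)(N-3)}{r^2}\psi'' - \tfrac{(N-1)(N-3)}{r^3}\psi'.
\]

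I would parametrize the profile through its derivative, writing $\psi'(r) = r\phi(r)$ with $\phi \in [0,1]$, $\phi \equiv 1$ on $[0,R]$, and $\phi \equiv 0$ on $[\tilde R, \infty)$. This choice immediately gives \ref{enum:approx} and \ref{enum:support}; the Hessian eigenvalues become $\phi$ and $\phi + r\phi'$, and $\Delta q = N\phi + r\phi'$. Substituting into the formula above, the $r^{-3}$ term and half of the $r^{-2}$ contribution cancel, and a short computation reduces the bilaplacian to
\[
\Delta^2 q \;=\; r\phi''' + (2N+1)\phi'' + \tfrac{N^2-1}{r}\phi'.
\]

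The decisive choice is a logarithmic transition: on $[R,\tilde R]$ I set $\phi(r) = \chi\!\bigl(\tfrac{1}{L}\log(r/R)\bigr)$, where $L \gg 1$ is a large parameter, $\tilde R := R e^L$, and $\chi \in C^{3,1}([0,\infty);[0,1])$ is a fixed monotone decreasing cutoff, equal to $1$ near $0$ and vanishing on $[1,\infty)$, with derivatives vanishing at both endpoints up to order three (ensuring $q \in C^{3,1}(\bR^N)$). Each derivative of $\phi$ picks up an extra factor $1/(Lr)$; applying the chain rule three times and collecting powers of $1/L$ yields, on the transition region,
\[
r^2 \Delta^2 q \;=\; \tfrac{1}{L^3}\chi''' + \tfrac{2(N-1)}{L^2}\chi'' + \tfrac{N(N-2)}{L}\chi'.
\]
Since $\chi' \leq 0$ and $N(N-2) > 0$, the leading $1/L$ term is non-positive, and the two remaining terms are $O(1/L^2)$ with $\chi$ fixed. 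Choosing $L$ large (depending only on $c$ and on the fixed $\chi$) therefore yields $r^2 \Delta^2 q \leq c$, i.e.\ \ref{enum:bilapl}. On $[0,R]$ and $[\tilde R,\infty)$ the bilaplacian vanishes identically. The same largeness of $L$ makes $|r\phi'| = |\chi'|/L \leq c$, so the two Hessian eigenvalues $\phi \in [0,1]$ and $\phi + \chi'/L$ lie in $[-c, 1+c]$, giving \ref{enum:convex}; the bounds $|\grad q| = r\phi \leq |x|$ and $|\Delta q| \leq N + c$ in \ref{enum:gradlap} are then automatic, with constants independent of $c$ and $R$.

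The main obstacle is achieving the sharpness in \ref{enum:bilapl}: a polynomial cutoff over an interval $[R, 2R]$ would already produce $|\Delta^2 q| \sim r^{-2}$ with an $O(1)$ coefficient that cannot be reduced by adjusting free parameters. The logarithmic scale is exactly what pays an extra factor $1/L$ for each derivative of $\phi$, so that after the cancellation visible in the second displayed formula above the bilaplacian becomes $O(1/L)\cdot r^{-2}$ and can be made arbitrarily small while keeping the Hessian lower bound, gradient and Laplacian estimates intact.
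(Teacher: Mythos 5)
Your construction is correct, and it is a genuinely different route from the paper's. The paper builds an explicit profile $q_0$ (formula \eqref{eq:q0}) which is convex for $r \geq 1$ and satisfies $\Delta^2 q_0 = -N(N-2)r^{-3} < 0$ there, and then truncates it at a large radius $R_0$ by a Taylor-type correction; the smallness needed for \ref{enum:convex} and \ref{enum:bilapl} comes from the polynomial decay of $q_0''(R_0) \sim R_0^{-N+2}$ and $q_0'''(R_0) \sim R_0^{-N+1}$, so all the work is in checking that the glued piece inherits the bounds. You instead keep the ansatz $\psi'(r) = r\phi(r)$ and perform the cutoff of $\phi$ on a logarithmic scale of length $L$, so that each derivative of $\phi$ costs a factor $(Lr)^{-1}$; your identity $r^2\Delta^2 q = L^{-3}\chi''' + 2(N-1)L^{-2}\chi'' + N(N-2)L^{-1}\chi'$ is correct (I checked the radial bilaplacian formula and the reduction), and since $\chi' \leq 0$ the only term that is merely $O(L^{-1})$ has the good sign, while the possibly positive terms are $O(L^{-2})$; taking $L$ large gives \ref{enum:bilapl}, and $|r\phi'| \leq \|\chi'\|_\infty/L$ gives \ref{enum:convex} and the uniform bounds in \ref{enum:gradlap} (after the harmless reduction to $c \leq 1$, or by always taking $L \geq \|\chi'\|_\infty$). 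The regularity bookkeeping also works: with $\chi \in C^{3,1}$ equal to $1$ near $0$ and $0$ near $1$, the matching at $|x| = R$ and $|x| = \wt R = R\eee^{L}$ is exact to the required order, and near the origin $q = \frac12|x|^2$ is smooth. Both proofs ultimately exploit the same sign phenomenon — in the paper it appears as $\Delta^2 q_0 < 0$ for the exact profile, in yours as the non-positivity of the leading coefficient $N(N-2)\chi'/L$ — but your argument avoids solving for an explicit auxiliary profile at the price of a transition region whose width $\wt R/R = \eee^{L}$ grows as $c \to 0$, which the statement of the lemma permits since $\wt R$ may depend on $c$ and $R$.
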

\begin{remark}
  We require $C^{3, 1}$ regularity in order not to worry about boundary terms in Pohozaev identities, see the proof of \eqref{eq:A-pohozaev}.
\end{remark}
\begin{proof}
  It suffices to prove the result for $R = 1$ since the function $q_R(x) := R^2 q(\frac xR)$ satisfies
  the listed properties if and only if $q(x)$ does.

  Let $r$ denote the radial coordinate. Define $q_0(x)$ by the formula
  \begin{equation}
    \label{eq:q0}
    q_0(r) := \left\{
      \begin{aligned}
        &\frac 12 r^2\qquad & r\leq 1 \\
        &\frac{N(N-2)r}{(N-1)(N-3)} - \frac{N}{2(N-4)}+ \frac{N}{2(N-3)(N-4)r^{N-4}} - \frac{1}{2(N-1)r^{N-2}}\qquad & r \geq 1.
      \end{aligned}\right.
  \end{equation}
  A direct computation shows that for $r > 1$ we have $q_0'(r) = \frac{N(N-2)}{(N-1)(N-3)} - \frac{N}{2(N-3)r^{N-3}} + \frac{N-2}{2(N-1)r^{N-1}}$,
  $q_0''(r) = \frac{N}{2r^{N-2}} - \frac{N-2}{2r^N} > 0$ (so $q_0(x)$ is convex), $q_0'''(r) = \frac{N(N-2)}{2}\big({-}\frac{1}{r^{N-1}}+\frac{1}{r^{N+1}}\big)$ and $\Delta^2 q_0(r) = -N(N-2) r^{-3} < 0$.
  In particular,
  \begin{equation}
  \lim_{r \to 1^+} \big(q_0(r), q_0'(r), q_0''(r), q_0'''(r)\big) = \big(\frac 12, 1, 1, 0\big).
\end{equation}
Hence $q_0 \in C^{3,1}$ and it satisfies all the listed properties except for \ref{enum:support}. We correct it as follows.

  Let $e_j(r) := \frac{1}{j!}r^j\cdot \chi(r)$ for $j \in \{1, 2, 3\}$ and let $R_0 \gg 1$. We define
  \begin{equation}
    \label{eq:q}
    q(r) := \bigg\{
      \begin{aligned}
        &q_0(r)\qquad & r\leq R_0 \\
        &{\textstyle q_0(R_0) + \sum_{j=1}^{3}q_0^{(j)}(R_0)\cdot R_0^j\cdot e_j(-1 + R_0^{-1}r)}\qquad & r \geq R_0.
      \end{aligned}
  \end{equation}
  Note that $q_0'(R_0) \sim 1$, $q_0''(R_0) \sim R_0^{-N+2}$ and $q_0'''(R_0) \sim R_0^{-N+1}$.
  It is clear that $q(x) \in C^{3,1}(\bR^N)$.
  Property \ref{enum:approx} holds since $R_0 > 1$. By the definition of the functions $e_j$ we have $q(r) = q_0(R_0) = \tx{const}$ for $r \geq 3R_0$,
  hence \ref{enum:support} holds with $\wt R = 3R_0$. From the definition of $q(r)$ we get $|q'(r)| \lesssim |q_0'(R_0)| \lesssim r$ and $|q''(r)| \lesssim |q_0''(R_0)| \lesssim R_0^{-N+2} \lesssim 1$ for $r \geq R_0$,
  with a constant independent of $R_0$, which implies \ref{enum:gradlap}. Similarly, $|\partial_{x_i x_j}q(x)| \lesssim R_0^{-1}$ for $|x| \geq R_0$, which implies \ref{enum:convex}
  if $R_0$ is large enough. Finally $|\Delta^2 q(x)| \lesssim R_0^{-3}$ for $|x| \geq R_0$ and $\Delta^2 q(x) = 0$ for $|x| \geq 3R_0$. This proves \ref{enum:bilapl} if $R_0$ is large enough.
\end{proof}
In the sequel $q(x)$ always denotes a function of class $C^{3, 1}(\bR^N)$ verifying \ref{enum:approx}--\ref{enum:bilapl}
with sufficiently small $c$ and sufficiently large $R$.

For $\lambda > 0$ we define the operators $A(\lambda)$ and $A_0(\lambda)$ as follows.
\begin{align}
  \label{eq:op-A}
  [A(\lambda)h](x) &:= \frac{N-2}{2N\lambda^2}\Delta q\big(\frac{x}{\lambda}\big)h(x) + \frac{1}{\lambda}\grad q\big(\frac{x}{\lambda}\big)\cdot \grad h(x), \\
  [A_0(\lambda)h](x) &:= \frac{1}{2\lambda^2}\Delta q\big(\frac{x}{\lambda}\big)h(x) + \frac{1}{\lambda}\grad q\big(\frac{x}{\lambda}\big)\cdot \grad h(x). \label{eq:op-A0}
\end{align}
Combining these definitions with the fact that $q(x)$ is an approximation of $\frac 12 |x|^2$
we see that $A(\lambda)$ and $A_0(\lambda)$ are approximations (in a sense not yet precised)
of $\lambda^{-2}\Lambda$ and $\lambda^{-2}\Lambda_0$ respectively.
We will write $A$ and $A_0$ instead of $A(1)$ and $A_0(1)$ respectively. Note the following scale-change formulas, which follow directly from the definitions:
\begin{equation}
\label{eq:A-rescale}
\forall h\in \cE:\qquad A(\lambda)(h_\lambda) = \lambda^{-2}(Ah)_\lambda,\quad A_0(\lambda)(h_\lambda) = \lambda^{-2}(A_0 h)_\lambda.
\end{equation}
\begin{lemma}
  \label{lem:op-A}
  The operators $A(\lambda)$ and $A_0(\lambda)$ have the following properties:
  \begin{itemize}
    \item for $\lambda > 0$ the families $\{A(\lambda)\}$, $\{A_0(\lambda)\}$, $\{\lambda\partial_\lambda A(\lambda)\}$, $\{\lambda\partial_\lambda A_0(\lambda)\}$
      are bounded in $\scrL(\cE; \dot H^{-1})$ and the families $\{\lambda A(\lambda)\}$, $\{\lambda A_0(\lambda)\}$ are bounded in $\scrL(\cE; L^2)$,
      with the bound depending on the choice of the function $q(x)$,
    \item for all complex-valued $h_1, h_2 \in X^1(\bR^N)$ and $\lambda > 0$ there holds
        \begin{gather}
        \label{eq:A-by-parts}
        \la A(\lambda)h_1, f(h_1 + h_2) - f(h_1) - f'(h_1)h_2\ra = -\la A(\lambda)h_2, f(h_1+h_2) - f(h_1)\ra, \\
        \la h_1, A_0(\lambda)h_2\ra = -\la A_0(\lambda)h_1, h_2\ra, \qquad \text{hence $iA_0(\lambda)$ is a symmetric operator,} \label{eq:A0-by-parts}
      \end{gather}
    \item for any $c_0 > 0$, if we choose $c$ in Lemma~\ref{lem:fun-a} small enough, then for all $h \in X^1$ there holds
      \begin{equation}
        \label{eq:A-pohozaev}
        \la A_0(\lambda)h, \Delta h\ra \leq \frac{c_0}{\lambda^2} \|h\|_{\cE}^2 - \frac{1}{\lambda^2}\int_{|x| \leq R\lambda}|\grad h(x)|^2 \ud x.
      \end{equation}
  \end{itemize}
\end{lemma}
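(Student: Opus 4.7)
The three bullets of Lemma~\ref{lem:op-A} call for rather different techniques, and I would treat them separately. For the boundedness statements, the convenient first step is to rewrite $A(\lambda)h = \tfrac{N-2}{2N}\Delta\wt q\cdot h + \grad\wt q\cdot \grad h$ and $A_0(\lambda)h = \tfrac12\Delta\wt q\cdot h + \grad\wt q\cdot \grad h$ with $\wt q(x):=q(x/\lambda)$. Properties \ref{enum:gradlap} and \ref{enum:support} then give $|\grad\wt q(x)| \lesssim |x|/\lambda^2$ and $|\Delta\wt q(x)| \lesssim \lambda^{-2}$, both supported on $|x|\leq \wt R\lambda$. The bound in $\scrL(\cE; \dot H^{-1})$ follows from Sobolev's embedding $L^{\frac{2N}{N+2}}\hookrightarrow \dot H^{-1}$: by H\"older on the support one finds $\|\Delta\wt q\cdot h\|_{L^{2N/(N+2)}}\lesssim \|h\|_{\cE}$ and likewise for the gradient piece. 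The $L^2$ bound for $\lambda A(\lambda)$ and $\lambda A_0(\lambda)$ is obtained by the same Hardy inequality $\||x|^{-1}h\|_{L^2}\lesssim \|h\|_{\cE}$ applied on the compactly supported region. Finally, an elementary calculation shows that $\lambda\partial_\lambda A(\lambda)$ and $\lambda\partial_\lambda A_0(\lambda)$ have the same algebraic shape as $A(\lambda)$ and $A_0(\lambda)$ but with $\grad q$, $\Delta q$ replaced by smooth functions of $x/\lambda$ enjoying the same support and growth bounds.

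For \eqref{eq:A0-by-parts}, a single integration by parts of $\int \grad\wt q\cdot \conj{h_1}\grad h_2\ud x$ produces a correction $-\int \Delta\wt q\cdot \conj{h_1}h_2\ud x$, which combined with the two symmetric first-order pieces $\tfrac12\Delta\wt q$ in $A_0(\lambda)h_1$ and $A_0(\lambda)h_2$ yields the antisymmetry exactly. For \eqref{eq:A-by-parts}, I would first rearrange the desired identity to
\begin{equation*}
\la A(\lambda)(h_1+h_2), f(h_1+h_2)\ra = \la A(\lambda)h_1, f'(h_1)h_2\ra + \la A(\lambda)h_2, f(h_1)\ra.
\end{equation*}
The left-hand side vanishes because of the scale-invariance of $\int F(u)\ud x$: using the homogeneity $\Re(\bar u f(u)) = \tfrac{2N}{N-2}F(u)$ together with $\int \grad\wt q\cdot \grad F(u)\ud x = -\int \Delta\wt q\cdot F(u)\ud x$, the coefficient $\tfrac{N-2}{2N}$ is calibrated precisely so that $\la A(\lambda)u, f(u)\ra = 0$. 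The right-hand side also vanishes: exploiting the self-adjointness \eqref{eq:auto-scalar} and Euler's identity $f'(h_1)h_1 = \tfrac{N+2}{N-2}f(h_1)$, then integrating $\int \grad\wt q\cdot f'(h_1)\grad h_1\cdot \bar h_2\ud x$ by parts, each summand arises as the negative of the other, and the $\Delta\wt q$ contributions balance algebraically.

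The third bullet carries the main analytic content. I would compute $\la A_0(\lambda)h, \Delta h\ra$ directly using $\Re(\bar h\Delta h) = \tfrac12\Delta|h|^2 - |\grad h|^2$ for the symmetric piece and the pointwise rewriting $\Re(\partial_j\bar h\Delta h) = \partial_k\Re(\partial_j\bar h\partial_k h) - \tfrac12\partial_j|\grad h|^2$ for the gradient piece; two integrations by parts then give the clean identity
\begin{equation*}
\la A_0(\lambda)h, \Delta h\ra = \tfrac14\int \Delta^2\wt q\,|h|^2\ud x - \sum_{j, k}\int \partial_j\partial_k\wt q\cdot \Re(\partial_j\bar h\partial_k h)\ud x.
\end{equation*}
On $|x|\leq R\lambda$, property \ref{enum:approx} forces $\partial_j\partial_k\wt q = \lambda^{-2}\delta_{jk}$ and $\Delta^2\wt q = 0$, so the quadratic form contributes exactly the desired negative term $-\lambda^{-2}\int_{|x|\leq R\lambda}|\grad h|^2\ud x$. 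Outside this ball, \ref{enum:convex} (rescaled) bounds the Hessian contribution from below by $-c\lambda^{-2}|\grad h|^2$ pointwise, while \ref{enum:bilapl} combined with Hardy's inequality controls the bilaplacian contribution by $c\lambda^{-2}\|h\|_\cE^2$. The main subtlety, which I expect to be the main obstacle, is arranging the constants: the $c$ appearing in \ref{enum:convex}--\ref{enum:bilapl} must be fixed small enough relative to the absolute Hardy constant to make both error terms uniformly bounded by $c_0\lambda^{-2}\|h\|_\cE^2$, independently of the already-fixed parameters $R$ and $\wt R$.
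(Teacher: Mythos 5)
Your proof is correct and follows essentially the same route as the paper: the null identity $\la A(\lambda)u, f(u)\ra = 0$ together with the cross-term cancellation $\la A(\lambda)h_2, f(h_1)\ra + \la A(\lambda)h_1, f'(h_1)h_2\ra = 0$ for \eqref{eq:A-by-parts}, antisymmetry by integration by parts for \eqref{eq:A0-by-parts}, and the Pohozaev-type identity combined with \ref{enum:approx}, \ref{enum:convex}, \ref{enum:bilapl} and Hardy's inequality for \eqref{eq:A-pohozaev} (the paper additionally records the routine reduction to $h, h_1, h_2 \in C_0^\infty$ by density and continuity in the $X^1$ topology, which you should state before integrating by parts). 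Note also that your sign $+\tfrac14\int\Delta^2\wt q\,|h|^2$ in the virial identity is the correct one — it is precisely what makes the one-sided bound \ref{enum:bilapl} usable via Hardy — whereas the paper's displayed identity \eqref{eq:aux-pohozaev} carries a sign slip on this term that does not affect the conclusion.
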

In dimension $N = 6$ and for real-valued functions, this was proved in \cite[Lemma 3.12]{moi16p}.
Most arguments apply without change, but we provide here a full computation for the reader's convenience.
\begin{proof}
  Since $\grad q(x)$ and $\grad^2 q(x)$ are continuous and of compact support, it is clear that $A$ and $A_0$ are bounded operators $\cE \to \dot H^{-1}$.
  From the invariance \eqref{eq:A-rescale} we see that $A(\lambda)$ and $A_0(\lambda)$ have the same norms as $A$ and $A_0$ respectively.
  For $\lambda A(\lambda)$, $\lambda A_0(\lambda)$, $\lambda \partial_\lambda A(\lambda)$ and $\lambda \partial_\lambda A_0(\lambda)$ the proof is similar. We compute
  $$
  \partial_\lambda A(\lambda) = -\frac{N-2}{N\lambda^3}\Delta q\big(\frac{x}{\lambda}\big) - \frac{N-2}{2N\lambda^4}x\cdot\grad \Delta q\big(\frac{x}{\lambda}\big)
  - \frac{1}{\lambda^3}x\cdot\grad^2 q\big(\frac{x}{\lambda}\big)\cdot\grad.
  $$
  Since $\grad q(x)$, $\grad^2 q(x)$ and $\grad^3 q(x)$ are continuous and of compact support, we get boundedness of $\partial_\lambda A(1)$, and boundedness $\{\lambda\partial_\lambda A(\lambda)\}$ follows by the scaling invariance. Analogously for $\{\lambda\partial_\lambda A_0(\lambda)\}$.

  In \eqref{eq:A-by-parts}, we may assume without loss of generality that $\lambda = 1$.
  Notice that both sides are continuous with respect to the topology $\|h_1\|_{X^1} + \|h_2\|_{X^1}$.
  Indeed, $A$ is continuous from $X^1$ to $\cE$ and $(h_1, h_2) \mapsto \big(f(h_1 + h_2) - f(h_1) - f'(h_1)h_2, f(h_1 + h_2) - f(h_1)\big)$ is continuous from $\cE$ to $\dot H^{-1}$ by Sobolev and dual Sobolev.
  We may therefore assume that $h_1, h_2 \in C_0^\infty$.
  Observe that for any $h \in C_0^\infty$ there holds $f(h)\conj h = \frac{2N}{N-2}F(h)$ and $\Re\big(f(h)\grad \conj h\big) = \grad \big(F(h)\big)$, hence
  \begin{equation}
    \label{eq:A-by-parts-2}
    \la Ah, f(h)\ra =\Re \int_{\bR^N}\Big(\frac{N-2}{2N} \Delta q\conj h + \grad q\cdot \grad \conj h\Big)f(h)\ud x = \int_{\bR^N} \Delta q\cdot F(h) + \grad q\cdot \grad \big(F(h)\big)\ud x = 0.
  \end{equation}
  Using this for $h = h_1 + h_2$ and for $h = h_1$, \eqref{eq:A-by-parts} is seen to be equivalent to
  \begin{equation}
    \label{eq:A-by-parts-3}
    \la A h_2, f(h_1)\ra + \la A h_1, f'(h_1)h_2\ra = 0.
  \end{equation}
  Expanding the left side using the definition of $A$ we obtain
  \begin{equation}
    \label{eq:A-by-parts-4}
    \begin{aligned}
      \la A h_2, f(h_1)\ra + \la A h_1, f'(h_1)h_2\ra &= \Re\int_{\bR^N} \frac{N-2}{2N} \Delta q\cdot \conj{h_2}\cdot f(h_1) + \grad q\cdot \grad \conj{h_2}\cdot f(h_1)\ud x \\
      &+\Re\int_{\bR^N} \frac{N-2}{2N} \Delta q\cdot \conj{h_1}\cdot f'(h_1) h_2 + \grad q\cdot \grad \conj{h_1}\cdot f'(h_1) h_2\ud x
    \end{aligned}
  \end{equation}
  We have
  \begin{equation}
    \Re \int_{\bR^N} \grad q\cdot \grad \conj{h_2}\cdot f(h_1)\ud x = -\Re\int_{\bR^N}\conj{h_2}\cdot \Delta q\cdot f(h_1)\ud x
    - \Re\int_{\bR^N} \conj{h_2}\cdot \grad q\cdot \grad{f(h_1)}\ud x.
  \end{equation}
  Using \eqref{eq:auto-scalar} and the fact that $f'(h_1)h_1 = \frac{N+2}{N-2}f(h_1)$ we get
  \begin{equation}
    \Re\int_{\bR^N} \frac{N-2}{2N} \Delta q\cdot \conj{h_1}\cdot f'(h_1) h_2 \ud x
    = \Re \int_{\bR^N} \frac{N-2}{2N}\conj{h_2}\cdot \Delta q\cdot f'(h_1)h_1 = \Re \int_{\bR^N}\conj{h_2}\cdot \frac{N+2}{2N}\Delta q\cdot f(h_1)\ud x.
  \end{equation}
  Using \eqref{eq:auto-scalar} and the fact that $f'(h_1)\grad h_1 = \grad(f(h_1))$ we get
  \begin{equation}
    \Re\int_{\bR^N} \grad q\cdot \grad \conj{h_1}\cdot f'(h_1) h_2\ud x = \Re\int_{\bR^N} \conj{h_2}\cdot \grad q\cdot f'(h_1)\grad h_1\ud x = \Re\int_{\bR^N} \conj{h_2}\cdot \grad q\cdot \grad(f(h_1))\ud x.
  \end{equation}
  Plugging the last three formulas into \eqref{eq:A-by-parts-4} we obtain
  \begin{equation}
    \begin{gathered}
      \la A h_2, f(h_1)\ra + \la A h_1, f'(h_1)h_2\ra = \\
      = \Big\la h_2, \frac{N-2}{2N}\Delta q\cdot f(h_1) - \Delta q\cdot f(h_1) - \grad q\cdot \grad(f(h_1)) + \frac{N+2}{2N}\Delta q\cdot f(h_1)+\grad q\cdot \grad(f(h_1))\Big\ra = \\ = \la h_2, 0\ra = 0,
    \end{gathered}
  \end{equation}
  which proves \eqref{eq:A-by-parts-3}.

  Identity \eqref{eq:A0-by-parts} follows by an integration by parts.

  In \eqref{eq:A-pohozaev}, we can again assume that $\lambda = 1$ and $h \in C_0^\infty$ (we use the fact
  that $q \in C^{3,1}$, hence $\Delta^2 q$ is bounded and of compact support).
  Inequality \eqref{eq:A-pohozaev} follows easily from \ref{enum:approx}, \ref{enum:convex} and \ref{enum:bilapl},
  once we check the following identity:
  \begin{equation}
    \label{eq:aux-pohozaev}
      \Re\int_{\bR^N} \Delta h\cdot\big(\frac{1}{2}\Delta q\cdot\conj h + \grad q\cdot \grad \conj h\big)\ud x
      = -\frac{1}{4}\int_{\bR^N}(\Delta^2 q)|h|^2 \ud x - \int_{\bR^N}\sum_{i, j = 1}^N\partial_{ij}q\partial_i \conj h\partial_j h\ud x.
  \end{equation}
  We can assume that $q \in C_0^\infty$, and \eqref{eq:aux-pohozaev} follows from integration by parts:
  \begin{equation}
    \begin{aligned}
      &\Re\int_{\bR^N} \frac 12 \Delta h\cdot \Delta q\cdot \conj h + \Delta h\cdot \grad q\cdot \grad \conj h\ud x =
      \Re\int_{\bR^N}\sum_{j, k = 1}^N \big( \frac 12 \partial_{jj}h\cdot\partial_{kk}q\cdot \conj h + \partial_{jj}h\cdot \partial_k q\cdot \partial_k \conj h \big)\ud x \\
      &= \Re\int_{\bR^N}-\frac 12 \sum_{j, k}\partial_j h(\partial_{kk}q\partial_j \conj h + \partial_{jkk}q \cdot \conj h)+\sum_j \frac 12 \partial_j(|\partial_j h|^2)\partial_j q \\
      &+ \sum_{j\neq k}\big({-}\frac 12 \partial_k|\partial_j h|^2 \partial_k q - \partial_{jk}q\partial_j \conj h\partial_k h\big)\ud x \\
      &= \Re \int_{\bR^N}-\frac 12 \sum_{j, k}\big(\partial_{kk}q|\partial_j h|^2 + \frac 12 \partial_{jjkk}q\cdot |h|^2\big) - \frac 12 \sum_j \partial_{jj}q|\partial_j h|^2 \\
      &+\frac 12 \sum_{j\neq k}\partial_{kk}q|\partial_j h|^2 - \sum_{j\neq k}\partial_{jk}q\partial_j \conj h\partial_k h\ud x \\
      &= \int_{\bR^N} -\frac 14 \sum_{j, k}\partial_{jjkk}q\cdot |h|^2 -\sum_{j, k}\partial_{jk}q\partial_j \conj h\partial_k h\ud x.
    \end{aligned}
  \end{equation}
\end{proof}
\subsection{Closing the bootstrap}
\begin{proof}[Proof of Proposition~\ref{prop:bootstrap}]
  We split the proof into three steps. First we prove \eqref{eq:bootstrap-better-zeta} and \eqref{eq:bootstrap-better-mu}.
  Then we use the virial functional and variational estimates to prove \eqref{eq:bootstrap-better-theta},
  with $\frac 12$ replaced by any strictly positive constant.
  To do this, we have to deal somehow with the term $\|W\|_{L^2}^{-2}K$ in
  the modulation equation \eqref{eq:mod-th}. It involves terms quadratic in $g$, which is the critical size and will not allow to recover the small constant.
  However, it turns out that we can use a virial functional to absorb the essential part of $K$.
  Proving \eqref{eq:bootstrap-better-theta} is the most difficult step. Finally, \eqref{eq:bootstrap-better-g} will follow from variational estimates.

  \textbf{Step 1.}
  Integrating \eqref{eq:mod-zeta} on $[T, t]$ and using the fact that $\zeta(T) = -\frac{\pi}{2}$ we get
  \begin{equation}
    \big|\zeta(t) + \frac{\pi}{2}\big| = \big|\zeta(t) - \zeta(T)\big| = \big|\int_T^t \zeta'(\tau)\ud \tau\big| \leq c\int_T^t|\tau|^{-\frac{N-3}{N-6}} \leq c\cdot \frac{N-6}{3}|t|^{-\frac{3}{N-6}} \leq \frac 12 |t|^{-\frac{3}{N-6}},
  \end{equation}
  provided that $c \leq \frac{3}{2(N-6)}$.
  The proof of \eqref{eq:bootstrap-better-mu} is similar.

  \textbf{Step 2.}
  First, let us show that for $t \in [T, T_1]$ there holds
  \begin{equation}
    \label{eq:bootstrap-stable}
    |a_1^-(t)| < |t|^{-\frac{N}{2(N-6)}}, \qquad |a_2^-(t)| < |t|^{-\frac{N}{2(N-6)}}.
  \end{equation}
  This is verified initially, see \eqref{eq:initial-unstable}. Suppose that $T_2 \in (T, T_1)$ is the last time for which \eqref{eq:bootstrap-stable} holds for $t \in [T, T_2)$.
    Let for example $a_1^-(T_2) = |T_2|^{-\frac{N}{2(N-6)}}$. But since $\|g(T_2)\|_\cE^2 \lesssim |T_2|^{-\frac{N-1}{N-6}} \ll |T_2|^{-\frac{N}{2(N-6)}}$,
    \eqref{eq:proper-2m} implies that $\dd t a_1^-(T_2) < 0$, which contradicts the assumption that $a_1^-(t) < |T_2|^{-\frac{N}{2(N-6)}}$ for $t < T_2$.
    The proof of the other inequality is similar.

  Let $c_0 > 0$. We will prove that if $T_0$ is chosen large enough (depending on $c_0$), then
  \begin{equation}
    \label{eq:bootstrap-bbetter-theta}
    |\theta(t)| \leq c_0|t|^{-\frac{1}{N-6}},\qquad \text{for }t \in [T, T_1].
  \end{equation}
  By the conservation of energy, \eqref{eq:coer-bound} and \eqref{eq:initial-size} we have
  \begin{equation}
    \label{eq:bootstrap-energy}
    \big|E(u) - 2E(W)\big| = \big|E(u(T)) - 2E(W)\big| \lesssim |T|^{-\frac{N}{N-6}} \leq |t|^{-\frac{N}{N-6}},
  \end{equation}
  hence \eqref{eq:coer-conclusion} yields
  \begin{equation}
    \label{eq:bootstrap-bbetter-theta-leq}
    \theta \lambda^\frac{N-2}{2} \lesssim |t|^{-\frac{N}{N-6}} \quad\Rightarrow\quad \theta \lesssim |t|^{-\frac{N}{N-6} + \frac{N-2}{N-6}} = |t|^{-\frac{2}{N-6}} \ll |t|^{-\frac{1}{N-6}}.
  \end{equation}
  It remains to prove that 
  \begin{equation}
    \label{eq:bootstrap-bbetter-theta-geq}
  \theta \geq -c_0|t|^{-\frac{1}{N-6}}.
\end{equation}
  To this end, we consider the following real scalar function:
\begin{equation}
  \label{eq:psi}
  \psi(t) := \theta(t) - \frac{1}{2\|W\|_{L^2}^2}\la g(t), i A_0(\lambda(t))g(t)\ra.
\end{equation}
We will show that for $t \in [T, T_1]$ there holds
\begin{equation}
  \label{eq:deriv-psi}
  \psi'(t) \geq -c_1|t|^{-\frac{N-5}{N-6}},
\end{equation}
with $c_1 > 0$ as small as we like, by eventually enlarging $|T_0|$.

From \eqref{eq:bootstrap-bbetter-theta-leq} we get $\theta \lambda^{-\frac{N-6}{2}} \ll |t|^{-\frac{N-5}{N-6}}$,
hence, taking in Lemma~\ref{lem:mod} say $c = \frac 14 c_1$ and choosing $|T_0|$ large enough, \eqref{eq:mod-th} yields
\begin{equation}
  \label{eq:deriv-psi-1}
  \begin{aligned}
    \psi' &\geq -\frac{(N-2)\kappa^\frac{N-4}{2}}{N-6}\theta\lambda^\frac{N-6}{2} + \frac{K}{\lambda^2\|W\|_{L^2}^2} - \frac{c_1}{4}|t|^{-\frac{N-5}{N-6}}
  + \frac{1}{2\|W\|_{L^2}^2}\dd t\la g, i A_0(\lambda)g\ra \\
  &\geq \frac{1}{\|W\|_{L^2}^2}\Big(\frac{1}{\lambda^2}K - \frac 12 \dd t\la g, i A_0(\lambda)g\ra\Big) - \frac{c_1}{2}|t|^{-\frac{N-5}{N-6}},
\end{aligned}
\end{equation}
so we need to compute $\frac 12 \dd t\la g, i A_0(\lambda)g\ra$, up to terms of order $\ll |t|^{-\frac{N-5}{N-6}}$.
In this proof, the sign $\simeq$ will mean ``up to terms of order $\ll |t|^{-\frac{N-5}{N-6}}$ as $|T_0| \to +\infty$''.

Since $iA_0(\lambda)$ is symmetric, we have
\begin{equation}
  \label{eq:deriv-correction}
  \frac 12 \dd t \la g, i A_0(\lambda) g\ra = \frac 12 \lambda'\la g, i \partial_\lambda A_0(\lambda)g\ra + \la \partial_t g, i A_0(\lambda) g\ra.
\end{equation}
The first term is of size $\lesssim \big|\frac{\lambda'}{\lambda}\big|\cdot \|g\|_\cE^2 \ll |t|^{-\frac{N-5}{N-6}}$, hence negligible.
We expand $\partial_t g$ according to \eqref{eq:dtg}. Consider the terms in the second line of \eqref{eq:dtg}. It follows from \eqref{eq:param-all}
and the fact that $\|A_0(\lambda)g\|_{\dot H^{-1}} \lesssim \|g\|_\cE$ that their contribution is $\lesssim |t|^{-1}\|g\|_\cE \leq |t|^{-\frac{3N-13}{2(N-6)}} \ll |t|^{-\frac{N-5}{N-6}}$,
hence negligible, so we can write
\begin{equation}
  \label{eq:deriv-correction-1}
  \frac 12 \dd t \la g, i A_0(\lambda) g\ra \simeq \la \Delta g + f(\eee^{i\zeta}W_\mu + \eee^{i\theta}W_\lambda + g) - f(\eee^{i\zeta}W_\mu) - f(\eee^{i\theta}W_\lambda), A_0(\lambda) g\ra.
\end{equation}
We now check that
\begin{equation}
  \label{eq:deriv-correction-2}
  |\la f(\eee^{i\zeta}W_\mu + \eee^{i\theta}W_\lambda) - f(\eee^{i\zeta}W_\mu) - f(\eee^{i\theta}W_\lambda), A_0(\lambda)g\ra| \ll |t|^{-\frac{N-5}{N-6}}.
\end{equation}
The function $A_0(\lambda)g$ is supported in the ball of radius $\wt R\lambda$. In this region we have $W_\lambda \ll W_\mu$, hence \eqref{eq:pointwise-2} yields
$|\la f(\eee^{i\zeta}W_\mu + \eee^{i\theta}W_\lambda) - f(\eee^{i\zeta}W_\mu) - f(\eee^{i\theta}W_\lambda)| \lesssim |W_\lambda|^\frac{4}{N-2}$.
By a change of variable we obtain
\begin{equation}
  \|W_\lambda^\frac{4}{N-2}\|_{L^2(|x| \leq \wt R\lambda)} = \lambda^\frac{N-2}{2}\|W^\frac{4}{N-2}\|_{L^2(|x| \leq \wt R)} \lesssim |t|^{-\frac{N-2}{N-6}}.
\end{equation}
By the first property in Lemma~\ref{lem:op-A}, there holds $\|A_0(\lambda)g\|_{L^2} \lesssim \lambda^{-1}\|g\|_\cE \lesssim |t|^{-\frac{N-5}{2(N-6)}}$,
hence the Cauchy-Schwarz inequality implies \eqref{eq:deriv-correction-2} (with a large margin).
By the triangle inequality, \eqref{eq:deriv-correction-1} and \eqref{eq:deriv-correction-2} yield
\begin{equation}
  \label{eq:deriv-correction-3}
  \frac 12 \dd t \la g, i A_0(\lambda) g\ra \simeq \la \Delta g + f(\eee^{i\zeta}W_\mu + \eee^{i\theta}W_\lambda + g) - f(\eee^{i\zeta}W_\mu + \eee^{i\theta}W_\lambda), A_0(\lambda) g\ra.
\end{equation}
We transform the right hand side using \eqref{eq:A-by-parts}, \eqref{eq:A-pohozaev} and the fact that $A_0(\lambda)g = \frac{1}{N\lambda^2} \Delta q\big(\frac{\cdot}{\lambda}\big)g + A(\lambda)g$.
Note that for any $c_2 > 0$ we have $\frac{c_0}{\lambda^2}\|g\|_\cE^2 \leq \frac{c_2}{2}|t|^{-\frac{N-5}{N-6}}$ if we choose $c_0$ small enough, thus
\begin{equation}
  \label{eq:deriv-correction-expand}
  \begin{aligned}
    &\frac 12 \dd t \la g, i A_0(\lambda) g\ra \leq c_2|t|^{-\frac{N-5}{N-6}} \\
    &-\frac{1}{\lambda^2}\Big(\int_{|x| \leq R\lambda}|\grad g|^2 \ud x - \big\la f(\eee^{i\zeta}W_\mu + \eee^{i\theta}W_\lambda + g) - f(\eee^{i\zeta}W_\mu + \eee^{i\theta}W_\lambda), \frac 1N \Delta q\big(\frac{\cdot}{\lambda}\big)g\big\ra\Big) \\
    &- \la A(\lambda)(\eee^{i\zeta}W_\mu + \eee^{i\theta}W_\lambda), f(\eee^{i\zeta}W_\mu + \eee^{i\theta}W_\lambda + g) - f(\eee^{i\zeta}W_\mu + \eee^{i\theta}W_\lambda) - f'(\eee^{i\zeta}W_\mu + \eee^{i\theta}W_\lambda)g\ra,
\end{aligned}
\end{equation}
where $c_2$ can be made arbitrarily small. Consider the second line. We will check that
\begin{equation}
  \label{eq:deriv-correction-4}
  \Big|\big\la f(\eee^{i\zeta}W_\mu + \eee^{i\theta}W_\lambda + g) - f(\eee^{i\zeta}W_\mu + \eee^{i\theta}W_\lambda), \frac 1N \Delta q\big(\frac{\cdot}{\lambda}\big)g\big\ra - \la f'(\eee^{i\theta}W_\lambda)g, g\ra\Big| \ll |t|^{-\frac{N-1}{N-6}}.
\end{equation}
Indeed, $\Delta q$ is bounded, hence $\big\|\frac 1N \Delta q\big(\frac{\cdot}{\lambda}\big)g\big\|_{L^\frac{2N}{N-2}} \lesssim \|g\|_\cE$.
By \eqref{eq:pointwise-1} we have
\begin{equation}
  \|f(\eee^{i\zeta}W_\mu + \eee^{i\theta}W_\lambda + g) - f(\eee^{i\zeta}W_\mu + \eee^{i\theta}W_\lambda) - f'(\eee^{i\zeta}W_\mu + \eee^{i\theta}W_\lambda)g\|_{L^\frac{2N}{N+2}} \lesssim \|g\|_\cE^\frac{N+2}{N-2} \ll \|g\|_\cE.
\end{equation}
Now from \eqref{eq:pointwise-5} we obtain
\begin{equation}
  \big\|\big(f'(\eee^{i\zeta}W_\mu + \eee^{i\theta}W_\lambda)- f'(\eee^{i\theta}W_\lambda)\big)g\big\|_{L^\frac{2N}{N+2}(|x| \leq \wt R\lambda)}
  \lesssim \|f'(\eee^{i\zeta}W_\mu)\|_{L^\frac N2(|x| \leq \wt R\lambda)}\|g\|_\cE \ll \|g\|_\cE.
\end{equation}
We have obtained
\begin{equation}
  \Big|\big\la f(\eee^{i\zeta}W_\mu + \eee^{i\theta}W_\lambda + g) - f(\eee^{i\zeta}W_\mu + \eee^{i\theta}W_\lambda), \frac 1N \Delta q\big(\frac{\cdot}{\lambda}\big)g\big\ra - \big\la f'(\eee^{i\theta}W_\lambda)g, \frac 1N \Delta q\big(\frac{\cdot}{\lambda}\big)g\big\ra\Big| \ll |t|^{-\frac{N-1}{N-6}}.
\end{equation}
But $\frac 1N \Delta q\big(\frac{x}{\lambda}\big) = 1$ for $|x| \leq R\lambda$ and $\|f'(\eee^{i\theta}W_\lambda)\|_{L^\frac N2(|x| \geq R\lambda)} \ll 1$ for $R$ large. This proves \eqref{eq:deriv-correction-4}.

The bounds \eqref{eq:bootstrap-unstable} and \eqref{eq:bootstrap-stable} together with \eqref{eq:coer-L-2} imply that
\begin{equation}
  \int_{|x| \leq R\lambda}|\grad g|^2 \ud x - \la f'(\eee^{i\theta}W_\lambda)\big)g, g\ra \geq -c_3\|g\|_\cE^2,
\end{equation}
with $c_3$ as small as we like by enlarging $R$. Thus, we have obtained that the second line in \eqref{eq:deriv-correction-expand} is $\leq c_2|t|^{-\frac{N-5}{N-6}}$,
with $c_2$ which can be made arbitrarily small.

We are left with the third line of \eqref{eq:deriv-correction-expand}. We will show that it equals $\frac{1}{\lambda^2}K$ up to negligible terms.
The support of $A(\lambda)(\eee^{i\zeta}W_\mu)$ is contained in $|x| \leq \wt R\lambda$ and $\|A(\lambda)(\eee^{i\zeta}W_\mu)\|_{L^\infty} \lesssim \lambda^{-2}$,
hence
\begin{equation}
  \|A(\lambda)(\eee^{i\zeta}W_\mu)\|_{L^\frac{2N}{N-2}} \lesssim \big(\lambda^N\lambda^{-\frac{4N}{N-2}}\big)^\frac{N-2}{2N} = \lambda^\frac{N-6}{2} \sim |t|^{-1}.
\end{equation}
From \eqref{eq:pointwise-1} and H\"older we have
\begin{equation}
  \|f(\eee^{i\zeta}W_\mu + \eee^{i\theta}W_\lambda + g) - f(\eee^{i\zeta}W_\mu + \eee^{i\theta}W_\lambda) - f'(\eee^{i\zeta}W_\mu + \eee^{i\theta}W_\lambda)g\|_{L^\frac{2N}{N+2}} \lesssim \|g\|_\cE^\frac{N+2}{N-2} \ll |t|^{-\frac{1}{N-6}}.
\end{equation}
Thus, in the third line of \eqref{eq:deriv-correction-expand} we can replace $A(\lambda)(\eee^{i\zeta}W_\mu + \eee^{i\theta}W_\lambda)$ by $A(\lambda)(\eee^{i\theta}W_\lambda)$.
Property \ref{enum:gradlap} implies that $|AW - \Lambda W| \lesssim W$ pointwise, with a constant independent of $c$ and $R$ used in the definition of the function $q$.
After rescaling and phase change we obtain $\big|A(\lambda)(\eee^{i\theta}W_\lambda) - \frac{1}{\lambda^2}\eee^{i\theta}\Lambda W_\lambda\big| \lesssim \frac{1}{\lambda^2} W_\lambda$.
But $A(\lambda)W = \frac{1}{\lambda^2}\Lambda W_\lambda$ for $|x| \leq R\lambda$, so we obtain
\begin{equation}
  \begin{aligned}
  &\Big|\la A(\lambda)(\eee^{i\theta}W_\lambda) - \frac{1}{\lambda^2}\eee^{i\theta}\Lambda W_\lambda, f(\eee^{i\zeta}W_\mu + \eee^{i\theta}W_\lambda + g) - f(\eee^{i\zeta}W_\mu + \eee^{i\theta}W_\lambda) - f'(\eee^{i\zeta}W_\mu + \eee^{i\theta}W_\lambda)g\ra\Big| \\
  & \lesssim \frac{1}{\lambda^2}\int_{|x| \geq R\lambda}W_\lambda\cdot |f(\eee^{i\zeta}W_\mu + \eee^{i\theta}W_\lambda + g) - f(\eee^{i\zeta}W_\mu + \eee^{i\theta}W_\lambda) - f'(\eee^{i\zeta}W_\mu + \eee^{i\theta}W_\lambda)g|\ud x.
  \end{aligned}
\end{equation}
Since $|\zeta - \theta| \simeq \frac{\pi}{2}$, we have $|\eee^{i\zeta}W_\mu + \eee^{i\theta}W_\lambda| \gtrsim W_\lambda$, hence \eqref{eq:pointwise-1} yields
\begin{equation}
  W_\lambda\cdot |f(\eee^{i\zeta}W_\mu + \eee^{i\theta}W_\lambda + g) - f(\eee^{i\zeta}W_\mu + \eee^{i\theta}W_\lambda) - f'(\eee^{i\zeta}W_\mu + \eee^{i\theta}W_\lambda)g| \lesssim W_\lambda^\frac{4}{N-2}|g|^2.
\end{equation}
Integrating over $|x| \geq R\lambda$ and using H\"older we find
\begin{equation}
    \begin{aligned}
  &\Big|\la A(\lambda)(\eee^{i\theta}W_\lambda) - \frac{1}{\lambda^2}\eee^{i\theta}\Lambda W_\lambda, f(\eee^{i\zeta}W_\mu + \eee^{i\theta}W_\lambda + g) - f(\eee^{i\zeta}W_\mu + \eee^{i\theta}W_\lambda) - f'(\eee^{i\zeta}W_\mu + \eee^{i\theta}W_\lambda)g\ra\Big| \\
  & \lesssim c_2 |t|^{-\frac{N-5}{N-6}},\qquad \text{with }c_2 \text{ arbitrarily small as }R \to +\infty.
\end{aligned}
\end{equation}

Resuming all the computations starting with \eqref{eq:deriv-correction}, we have shown that
\begin{equation}
  \frac 12 \dd t \la g, iA_0(\lambda) g\ra \leq \frac{c_1}{2}|t|^{-\frac{N-5}{N-6}} + \frac{1}{\lambda^2}K.
\end{equation}
Hence \eqref{eq:deriv-psi-1} yields \eqref{eq:deriv-psi}.

Since $\theta(T) = 0$, we have $|\theta(T)| \lesssim \|g(T)\|_\cE^2 \ll |T|^{-\frac{1}{N-6}}$.
Integrating \eqref{eq:deriv-psi} on $[T, t]$ we get $\psi(t) \gtrsim -c_1|t|^{-\frac{1}{N-6}}$. But $|\la g(t), A_0(\lambda)g(t)\ra| \lesssim \|g(t)\|_{\cE}^2 \leq |t|^{-\frac{N-1}{N-6}} \ll |t|^{-\frac{1}{N-6}}$, hence we obtain $\theta(t) \gtrsim -c_1|t|^{-\frac{1}{N-6}}$, which yields \eqref{eq:bootstrap-bbetter-theta-geq} if $c_1$ is chosen small enough.
This finishes the proof of \eqref{eq:bootstrap-better-theta}.

\textbf{Step 3.}
From \eqref{eq:coer-conclusion} we obtain $\|g\|_\cE^2 + C_0 \theta \lambda^\frac{N-2}{2} \leq C_1|t|^{-\frac{N}{N-6}}$, hence
\begin{equation}
  \|g\|_\cE^2 \leq -C_0 \theta \lambda^\frac{N-2}{2} + C_1|t|^{-\frac{N}{N-6}} \leq \frac 18|t|^{-\frac{N-1}{N-6}} + C_1|t|^{-\frac{N}{N-6}},
\end{equation}
provided that $c_0$ in \eqref{eq:bootstrap-bbetter-theta} is small enough. This yields \eqref{eq:bootstrap-better-g}.
\end{proof}

\subsection{Choice of the initial data by a topological argument}
The bootstrap in Proposition~\ref{prop:bootstrap} leaves out the control of $\lambda(t)$, $a_1^+(t)$ and $a_2^+(t)$.
We will tackle this problem here.

\begin{proposition}
  \label{prop:shooting}
  Let $|T_0|$ be large enough. For all $T < T_0$ there exist $\lambda^0, a_1^0, a_2^0$ satisfying \eqref{eq:initial-assum}
  such that the solution $u(t)$ with the initial data $u(T) = -iW + W_{\lambda^0} + g^0$ exists on the time interval $[T, T_0]$
  and for $t\in[T, T_0]$ the bounds \eqref{eq:bootstrap-better-zeta}, \eqref{eq:bootstrap-better-mu}, \eqref{eq:bootstrap-better-theta}, \eqref{eq:bootstrap-better-g},
  \begin{align}
    \big|\lambda(t) - \frac{1}{\kappa}(\kappa|t|)^{-\frac{2}{N-6}}\big| &\leq \frac 12 |t|^{-\frac{5}{2(N-6)}}, \label{eq:bootstrap-better-lambda} \\
    |a_1^+(t)| &\leq \frac 12 |t|^{-\frac{N}{2(N-6)}}, \label{eq:bootstrap-better-a1p} \\
    |a_2^+(t)| &\leq \frac 12 |t|^{-\frac{N}{2(N-6)}} \label{eq:bootstrap-better-a2p}
  \end{align}
  hold.
\end{proposition}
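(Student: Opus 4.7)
The plan is a topological shooting argument, via Brouwer's no-retraction theorem, applied to the three real parameters $(\lambda^0, a_1^0, a_2^0)$ left uncontrolled by Proposition~\ref{prop:bootstrap}. Setting $\lambda^*(t) := \frac{1}{\kappa}(\kappa|t|)^{-\frac{2}{N-6}}$, I introduce the rescaled exit vector
\begin{equation*}
\vec{\xi}(t) := \Big(|t|^{\frac{5}{2(N-6)}}\big(\lambda(t) - \lambda^*(t)\big),\ |t|^{\frac{N}{2(N-6)}} a_1^+(t),\ |t|^{\frac{N}{2(N-6)}} a_2^+(t)\Big),
\end{equation*}
so that the target estimates \eqref{eq:bootstrap-better-lambda}--\eqref{eq:bootstrap-better-a2p} and the initial admissibility \eqref{eq:initial-assum} both read $\|\vec{\xi}\|_\infty \leq \frac{1}{2}$. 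By Lemma~\ref{lem:initial}, prescribing $\vec{\xi}(T) = \vec{\xi}_0$ parameterizes admissible initial data by $\vec{\xi}_0 \in D := [-\tfrac{1}{2}, \tfrac{1}{2}]^3$ with $g^0$ continuous in $\vec{\xi}_0$. As long as $\vec{\xi}(t) \in D$, the hypothesis of Proposition~\ref{prop:bootstrap} is in force, so \eqref{eq:bootstrap-better-zeta}--\eqref{eq:bootstrap-better-g} hold and $u(t)$ extends via the local Cauchy theory. Define
\begin{equation*}
T^*(\vec{\xi}_0) := \sup\{t \in [T, T_0] : \vec{\xi}(s) \in D \text{ for all } s \in [T, t]\}.
\end{equation*}

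The crucial analytic step is outward transversality on $\partial D$. For $j = 2, 3$, Lemma~\ref{lem:proper} together with the product rule gives
\begin{equation*}
\dd t \xi_j(t) = \Big(\frac{\nu}{\lambda(t)^2} - \frac{N}{2(N-6)|t|}\Big)\xi_j(t) + O\Big(\frac{c}{\lambda(t)^2}\Big),
\end{equation*}
and since $\nu/\lambda(t)^2 \sim |t|^{4/(N-6)}$ dwarfs the other terms once $|T_0|$ is large, transversality for $\xi_2, \xi_3$ is automatic. For $\xi_1$, combining Lemma~\ref{lem:mod} with the linearization of $\lambda^{(N-4)/2} - (\lambda^*)^{(N-4)/2}$ and rescaling yields
\begin{equation*}
\dd t \xi_1(t) = \frac{2N - 13}{2(N-6)\,|t|}\xi_1(t) + O\big(c\,|t|^{-1}\big).
\end{equation*}
Since $2N - 13 \geq 1$ for $N \geq 7$, the freedom to take $c$ arbitrarily small (by enlarging $|T_0|$) forces $\sign\big(\dd t \xi_1(t)\big) = \sign\big(\xi_1(t)\big)$ whenever $|\xi_1(t)| = \tfrac{1}{2}$. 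This is the main obstacle of the proof: the $\lambda$-direction is only weakly unstable, with a linear rate of the same order $|t|^{-1}$ as the modulation remainder from Lemma~\ref{lem:mod}, so one must carefully exploit the strict inequality $2N - 13 > 0$ and force $c < \frac{2N-13}{4(N-6)}$.

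Granting transversality, suppose for contradiction that $T^*(\vec{\xi}_0) < T_0$ for every $\vec{\xi}_0 \in D$, and define $\Phi(\vec{\xi}_0) := \vec{\xi}(T^*(\vec{\xi}_0)) \in \partial D$. Continuity of $\Phi$ follows from continuous dependence of $\vec{\xi}(t)$ on $\vec{\xi}_0$ together with the transverse crossing of $\partial D$, which by the implicit function theorem makes $T^*$ continuous in $\vec{\xi}_0$. For $\vec{\xi}_0 \in \partial D$, transversality at $t = T$ pushes $\vec{\xi}$ out of $D$ for $t > T$ arbitrarily close to $T$, so $T^*(\vec{\xi}_0) = T$ and $\Phi(\vec{\xi}_0) = \vec{\xi}_0$. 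Thus $\Phi$ is a continuous retraction of $D \cong B^3$ onto $\partial D \cong S^2$, which is forbidden by Brouwer's theorem. Hence some $\vec{\xi}_0 \in D$ satisfies $T^*(\vec{\xi}_0) = T_0$, and the corresponding solution obeys \eqref{eq:bootstrap-better-lambda}--\eqref{eq:bootstrap-better-a2p} by construction and \eqref{eq:bootstrap-better-zeta}--\eqref{eq:bootstrap-better-g} by Proposition~\ref{prop:bootstrap}.
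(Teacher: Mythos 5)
Your scheme is the same as the paper's: rescale the three uncontrolled parameters $(\lambda, a_1^+, a_2^+)$ to the cube $[-\frac12,\frac12]^3$, use Lemmas~\ref{lem:mod} and~\ref{lem:proper} to show the rescaled system is repulsive at the boundary (you find the same coefficient $\frac{2N-13}{2(N-6)}$ in the $\lambda$-direction and correctly isolate the same delicate point, namely that the error $c|t|^{-1}$ from Lemma~\ref{lem:mod} must be beaten by taking $c$ small relative to $\frac{2N-13}{4(N-6)}$), and conclude with a Brouwer no-retraction argument for the exit map, fixing $\partial Q$ pointwise. The paper obtains continuity of the exit map through an outgoing-cone lemma rather than your implicit-function-theorem transversality; the two devices are interchangeable here (your strict outward monotonicity near each face is exactly what the cone lemma encodes, and it also handles edges and corners of the cube). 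A harmless slip: for the component $a_1^+$ the linear rate is $\nu/\mu(t)^2 \sim \nu$, not $\nu/\lambda(t)^2$; it still dominates $|t|^{-1}$, so the transversality is unaffected.

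The one genuine gap is the continuation step. The claim that ``$u(t)$ extends via the local Cauchy theory'' as long as $\vec\xi(t)\in D$ is not a valid argument for the energy-critical equation: local well-posedness gives no lower bound on the lifespan in terms of $\|u\|_{\cE}$, and a uniform bound on the energy norm does not rule out $T_+ \leq T_0$; the blow-up criterion is that $u(t)$ must leave every \emph{compact} subset of $\dot H^1$ as $t \to T_+$. Since the statement you are proving includes existence on $[T,T_0]$, and since your exit-time $T^*$ must be well defined inside the maximal interval, you need the argument the paper makes: while the bootstrap bounds hold, $u(t)$ stays within a small distance of the set $\{\eee^{i\zeta}W_\mu + \eee^{i\theta}W_\lambda\}$ with $(\zeta,\mu,\theta,\lambda)$ confined to a compact parameter range (depending on the fixed $T$), which is a compact subset of $\cE$, and Corollary~\ref{cor:leaves-compact} then excludes $T_+ \leq T_0$. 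With that inserted (and with the routine continuity argument making precise that ``$\vec\xi(t)\in D$ keeps the hypotheses of Proposition~\ref{prop:bootstrap} in force,'' since the remaining hypotheses \eqref{eq:bootstrap-zeta}--\eqref{eq:bootstrap-g} are exactly the bounds that Proposition~\ref{prop:bootstrap} improves), your proof is complete and coincides with the paper's.
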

The proof will be split into some lemmas.
For $t \in [T, T_0]$, $\wt \lambda > 0$, $\wt a_1 \in \bR$ and $\wt a_2 \in \bR$ we denote
\begin{equation*}
  X_t(\wt \lambda, \wt a_1, \wt a_2) := \Big(\frac{1}{\kappa}(\kappa|t|)^{-\frac{2}{N-6}} + \wt \lambda|t|^{-\frac{5}{2(N-6)}},
  \wt a_1|t|^{-\frac{N}{2(N-6)}}, \wt a_2|t|^{-\frac{N}{2(N-6)}}\Big).
\end{equation*}
We see that $\lambda(t)$, $a_1^+(t)$ and $a_2^+(t)$ satisfy \eqref{eq:bootstrap-better-lambda}, \eqref{eq:bootstrap-better-a1p}
and \eqref{eq:bootstrap-better-a2p} if and only if
\begin{equation}
  X_t^{-1}(\lambda(t), a_1^+(t), a_2^+(t)) \in Q := \Big[{-}\frac 12, \frac 12\Big]^3.
\end{equation}
\begin{lemma}
  Assume that $\lambda(t)$, $a_1^+(t)$ and $a_2^+(t)$ satisfy \eqref{eq:mod-l},
  \eqref{eq:proper-1p} and \eqref{eq:proper-2p} on the time interval $t \in (T_1, T_2)$
  and that
  \begin{equation}
    (p_0, p_1, p_2) := X_t^{-1}(\lambda(t), a_1^+(t), a_2^+(t)) \in Q \setminus \partial Q\qquad \text{for all }t \in (T_1, T_2).
  \end{equation}
  Then for all $t \in (T_1, T_2)$ there holds
  \begin{align}
    \Big|p_0'(t) - \frac{2N - 13}{2(N-6)}|t|^{-1}p_0(t)\Big| &\leq c|t|^{-1}, \label{eq:cube-p0} \\
    \Big|p_1'(t) - \frac{\nu}{\mu(t)}p_1(t)\Big| &\leq \frac{c}{\mu(t)}, \label{eq:cube-p1} \\
    \Big|p_2'(t) - \frac{\nu}{\lambda(t)}p_2(t)\Big| &\leq \frac{c}{\lambda(t)}, \label{eq:cube-p2} \\
  \end{align}
  where $c > 0$ can be made arbitrarily small by taking $T_0$ large enough.
\end{lemma}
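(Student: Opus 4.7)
The plan is a direct chain-rule computation: differentiate $p_0, p_1, p_2$ using their defining expressions, substitute the evolution equations from Lemmas~\ref{lem:mod} and \ref{lem:proper}, then check that each remainder is of the advertised size. The one nontrivial algebraic input is that the explicit profile $\lambda_0(t) := \frac{1}{\kappa}(\kappa|t|)^{-2/(N-6)}$ is, by the defining property \eqref{eq:kappa} of $\kappa$, an exact solution of the leading-order ODE $\lambda_0' = \frac{2\kappa^{(N-4)/2}}{N-6}\lambda_0^{(N-4)/2}$; equivalently $\lambda_0^{(N-6)/2} = \kappa^{-(N-4)/2}|t|^{-1}$.

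For $p_0 = (\lambda - \lambda_0)\,|t|^{5/(2(N-6))}$, the chain rule (noting $t<0$, so $\frac{d}{dt}|t|^\alpha = -\alpha |t|^{\alpha - 1}$) gives
\[
p_0'(t) = \bigl(\lambda'(t) - \lambda_0'(t)\bigr)\,|t|^{5/(2(N-6))} - \frac{5}{2(N-6)|t|}\,p_0(t).
\]
Subtracting the ODE for $\lambda_0$ from \eqref{eq:mod-l} and Taylor-expanding $\lambda^{(N-4)/2}$ at $\lambda_0$, the main linear contribution is
\[
\frac{2\kappa^{(N-4)/2}}{N-6}\cdot\frac{N-4}{2}\,\lambda_0^{(N-6)/2}\,(\lambda - \lambda_0) \;=\; \frac{N-4}{(N-6)|t|}(\lambda - \lambda_0),
\]
which, after multiplication by $|t|^{5/(2(N-6))}$ and combination with the weight-derivative term, yields the advertised coefficient $\frac{N-4}{N-6} - \frac{5}{2(N-6)} = \frac{2N-13}{2(N-6)}$. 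The remainder consists of (i) the modulation error from \eqref{eq:mod-l}, of size $O(c\,|t|^{-(2N-7)/(2(N-6))+5/(2(N-6))}) = O(c|t|^{-1})$, and (ii) the quadratic Taylor remainder, bounded by $|\lambda - \lambda_0|^2\,\lambda_0^{(N-8)/2}\,|t|^{5/(2(N-6))} \ll |t|^{-1}$ using the bound $|\lambda - \lambda_0| \leq \tfrac{1}{2}|t|^{-5/(2(N-6))}$ valid on $Q$. This establishes \eqref{eq:cube-p0}.

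For $p_j = |t|^{N/(2(N-6))}\,a^+_j$ with $j=1,2$, no Taylor expansion is needed. Differentiating and substituting \eqref{eq:proper-1p} or \eqref{eq:proper-2p} yields
\[
p_1'(t) = \frac{\nu}{\mu(t)^2}\,p_1(t) - \frac{N}{2(N-6)|t|}\,p_1(t) + O\!\Big(\tfrac{c}{\mu(t)^2}\Big),
\]
and analogously for $p_2$ with $\lambda$ in place of $\mu$. Since $|t|^{-1} \to 0$ as $|T_0|\to\infty$ and $\mu(t)\approx 1$ by \eqref{eq:bootstrap-better-mu}, the lower-order term $\frac{N}{2(N-6)|t|}p_j$ is absorbed into the admissible error, which yields \eqref{eq:cube-p1} and \eqref{eq:cube-p2}.

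There is no real obstacle; the entire statement is exponent bookkeeping on top of Lemmas~\ref{lem:mod} and \ref{lem:proper}. The only step requiring any thought is the linearization for $p_0$, where the exact solvability of $\lambda_0$ (i.e.\ the definition of $\kappa$) is what produces the clean rate $\frac{2N-13}{2(N-6)}$. Its positivity for $N\ge 7$ is precisely what makes $p_0$ an \emph{unstable} direction; combined with the obvious instability of $p_1$ and $p_2$, this provides the structural ingredient for the Brouwer-fixed-point argument that will select the initial data in the next step.
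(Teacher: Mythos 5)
Your proposal is correct and follows essentially the same route as the paper: differentiate the definitions of $p_0,p_1,p_2$, expand $\lambda^{\frac{N-4}{2}}$ around the exact profile $\frac{1}{\kappa}(\kappa|t|)^{-\frac{2}{N-6}}$ (the paper's ``Newton formula'' step), substitute \eqref{eq:mod-l}, \eqref{eq:proper-1p}, \eqref{eq:proper-2p}, and absorb the weight-derivative and Taylor remainders, with the same exponent bookkeeping giving the coefficient $\frac{2N-13}{2(N-6)}$. The only discrepancy is that you obtain the coefficients $\nu/\mu^2$ and $\nu/\lambda^2$ dictated by Lemma~\ref{lem:proper}, whereas \eqref{eq:cube-p1}--\eqref{eq:cube-p2} are stated with $\nu/\mu$ and $\nu/\lambda$; for $\mu\simeq 1$ this is harmless, and for $\lambda$ it reflects an inconsistency in the paper's own statement (its proof likewise produces the $\nu/\lambda^2$ form), which is immaterial for the sign/monotonicity argument in Lemma~\ref{lem:top-2}.
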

\begin{proof}
  By the definition of $p_0(t)$ we have
  \begin{equation}
    \label{eq:lambda-p0}
    \lambda = \frac{1}{\kappa}(\kappa|t|)^{-\frac{2}{N-6}} + p_0(t)|t|^{-\frac{5}{2(N-6)}}.
  \end{equation}
  Differentiating in time we obtain
  \begin{equation}
    \lambda'(t) = \frac{2}{N-6}(\kappa|t|)^{-\frac{N-4}{N-6}} + \frac{5}{2(N-6)}|t|^{-\frac{2N-7}{2(N-6)}}p_0(t)
    + |t|^{-\frac{5}{2(N-6)}}p_0(t).
  \end{equation}
  Applying the Newton formula to \eqref{eq:lambda-p0} and using the fact that $|p_0| \lesssim 1$ we get
  \begin{equation}
    \lambda^\frac{N-4}{2} \kappa^{-\frac{N-4}{2}}(\kappa|t|)^{-\frac{N-4}{N-6}}
    + \frac{N-4}{2}\kappa^{-\frac{N-6}{2}}(\kappa|t|)^{-1}p_0(t)|t|^{-\frac{5}{2(N-6)}} + O(|t|^{-\frac{N-3}{N-6}}),
  \end{equation}
  thus
  \begin{equation}
    \lambda' - \frac{2\kappa^\frac{N-4}{2}}{N-6}\lambda^\frac{N-4}{2} = \Big(\frac{5}{2(N-6)} - \frac{N-4}{N-6}\Big)|t|^{-\frac{2N-7}{2(N-6)}}p_0(t) + |t|^{-\frac{5}{2(N-6)}}p_0'(t) + O(|t|^{-\frac{N-3}{N-6}}).
  \end{equation}
  Using \eqref{eq:mod-l} and multiplying both sides by $|t|^\frac{5}{2(N-6)}$ we obtain \eqref{eq:cube-p0}.

  We have $a_1^+(t) = |t|^{-\frac{N}{2(N-6)}}p_1(t)$, which yields
  \begin{equation}
    \dd t a_1^+ - \frac{\nu}{\mu}a_1^+ = |t|^{-\frac{N}{2(N-6)}}\big(p_1'(t) - \frac{\nu}{\mu}p_1(t)\big) + O\big(|t|^{-\frac{N}{2(N-6)} - 1}\big),
  \end{equation}
  so \eqref{eq:proper-1p} implies \eqref{eq:cube-p1}. The proof of \eqref{eq:cube-p2} is similar.
\end{proof}
For $C > 1$, $j \in \{0, 1, 2\}$ and $p \in \bR^3$ we denote
\begin{equation}
  V_j^C(p) := \{p + (r_0, r_1, r_2): \sign(r_j) = \sign(p_j)\text{ and }\max_j|r_j| < C|r_j|\}.
\end{equation}
\begin{lemma}
  \label{lem:top-2}
  Assume that $\lambda(t)$, $a_1^+(t)$ and $a_2^+(t)$ satisfy \eqref{eq:bootstrap-lambda},
  \eqref{eq:mod-l}, \eqref{eq:bootstrap-unstable}, \eqref{eq:proper-1p} and \eqref{eq:proper-2p}
  for $t \in (T_1, T_2)$.
  There exists a constant $C > 0$, depending on $T_1$ and $T_2$,
  such that if for some $T_3 \in (T_1, T_2)$ and $j \in \{0, 1, 2\}$ there holds
  $|p_j(T_3)| \geq \frac 14$, then for all $t \in (T_3, T_2)$ there holds $p(t) \in V_j^C(p(T_3))$.
\end{lemma}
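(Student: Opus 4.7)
The plan is to exploit the fact that the ODEs \eqref{eq:cube-p0}--\eqref{eq:cube-p2} all have the structure $p_j'(t) = \omega_j(t)\, p_j(t) + O(c\,\omega_j(t))$, where $\omega_0(t) := \frac{2N-13}{2(N-6)}|t|^{-1}$, $\omega_1(t) := \nu/\mu(t)$, $\omega_2(t) := \nu/\lambda(t)$ are strictly positive rates and $c$ can be made arbitrarily small by enlarging $|T_0|$. Thanks to $\mu(t) \approx 1$ and $\lambda(t) \sim |t|^{-2/(N-6)}$ (Remark~\ref{rem:bootstrap-lambda}), each $\omega_\ell$ is comparable to a positive constant on the compact interval $[T_1, T_2]$.

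The first step is sign preservation. Assuming for concreteness $p_j(T_3) \geq 1/4$ (the opposite case is symmetric), I would use the ODE to observe that $p_j'(s) \geq \omega_j(s)(p_j(s) - c) > 0$ whenever $p_j(s) > c$. Since $c \ll 1/4$, a standard continuity argument then yields $p_j(t) \geq 1/4$ throughout $[T_3, T_2]$, and in particular $r_j(t) := p_j(t) - p_j(T_3)$ has the same sign as $p_j(T_3)$ for $t > T_3$.

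Next I would compare $|r_j(t)|$ to $|r_k(t)|$ for $k \neq j$. The lower bound $p_j(s) \geq 1/4$ fed back into the ODE gives $p_j'(s) \geq \omega_j(s)/8$, hence $|r_j(t)| \geq \frac{1}{8}\int_{T_3}^t \omega_j(s)\,ds$, while the bootstrap bounds $|p_k(s)| \leq 1/2$ combined with the ODEs give $|p_k'(s)| \leq \omega_k(s)$ and hence $|r_k(t)| \leq \int_{T_3}^t \omega_k(s)\,ds$. Because each $\omega_\ell$ is comparable to a positive constant on $[T_1, T_2]$, both integrals are comparable to $t - T_3$ when $j \in \{1,2\}$, and to $\log(|T_3|/|t|) \sim (t - T_3)/|T_3|$ when $j = 0$. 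In all cases the ratio $|r_k(t)|/|r_j(t)|$ is therefore bounded by a constant $C(T_1, T_2)$, which is exactly the condition for $p(t) - p(T_3) \in V_j^C(p(T_3))$.

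I do not expect a serious obstacle: the argument is essentially linear ODE bookkeeping, with the only mildly delicate observation being that one cannot obtain a $C$ independent of the interval, since $\omega_2 = \nu/\lambda(t)$ blows up as $|t| \to \infty$. This is why the statement allows $C$ to depend on $T_1, T_2$.
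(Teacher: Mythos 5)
Your proof is correct and takes essentially the same route as the paper: the paper simply records that the previous lemma gives $|p_j'(t)|\leq C_1$ on $(T_1,T_2)$ and that $|p_j(t)|\geq \frac14$ forces $|p_j'(t)|\geq c_1$ with $\sign p_j'(t)=\sign p_j(t)$, and then takes $C>\frac{C_1}{c_1}$, which is exactly the sign-preservation-plus-integration bookkeeping you carry out with the rates $\omega_j$. The only cosmetic slip is that the assumed bounds \eqref{eq:bootstrap-lambda} and \eqref{eq:bootstrap-unstable} give $|p_k(s)|\leq 1$ rather than $\leq \frac12$, which only changes the constants.
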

\begin{proof}
From the previous lemma we infer that there exist strictly positive constants $c_1$ and $C_1$, depending on $T_1$ and $T_2$,
such that $|p_j'(t)| \leq C_1$ and
\begin{equation}
  |p_j(t)| \geq \frac 14\quad \Rightarrow\quad |p_j'(t)| \geq c_1\text{ and }\sign p_j'(t) = \sign p_j(t).
\end{equation}
It is sufficient to take $C > \frac{C_1}{c_1}$.
\end{proof}
\begin{proof}
  The proof proceeds by contradiction. Supposing that the result does not hold, we will construct a continuous retraction $\Phi : Q \to \partial Q$, $\Phi(p) = p$ for $p \in \partial Q$. It is a well-known fact from topology that such a function $\Phi$ does not exist.

  Let $p^0 \in Q$. Take $(\lambda^0, \wt a_1^0, \wt a_2^0) = X_{T}(p^0)$ and let $g^0$ be given by Lemma~\ref{lem:initial}.
  Let $u: [T, T_+) \to \cE$ be the solution of \eqref{eq:nls} for the initial data $u(T) = -iW + W_{\lambda^0} + g^0$.
We will say that the solution $u$ is associated with $p^0 \in Q$.

  Let $T_2$ be the infimum of the values of $t \in [T, T_+)$ such that \eqref{eq:bootstrap-better-zeta}, \eqref{eq:bootstrap-better-mu}, \eqref{eq:bootstrap-better-theta}, \eqref{eq:bootstrap-better-g},
    \eqref{eq:bootstrap-better-lambda}, \eqref{eq:bootstrap-better-a1p} or \eqref{eq:bootstrap-better-a2p} does not hold.
  By our assumption that Proposition~\ref{prop:shooting} is false, we have that $T_2$ exists and $T_2 < T_0$.
  Indeed, if all the listed conditions were satisfied for $t \in [T, T_+)$, then Corollary~\ref{cor:leaves-compact} would imply that $T_+ > T_0$,
    hence all the conditions would hold on $[T, T_0]$, which contradicts the assumption.

    Set $p^1 := X_{T_2}^{-1}(\lambda(T_2), a_1^+(T_2), a_2^+(T_2))$.
  By continuity $p^1 \in Q$, and we will show that in fact $p^1 \in \partial Q$.
  Indeed, by continuity of the flow, the assumptions of Proposition~\ref{prop:bootstrap} are satisfied for $T_1 = T_2 + \tau$ for some $\tau > 0$.
  Hence \eqref{eq:bootstrap-better-zeta}, \eqref{eq:bootstrap-better-mu}, \eqref{eq:bootstrap-better-theta} and \eqref{eq:bootstrap-better-g}
  continue to hold on $[T_2, T_2 + \tau]$, so one of the conditions \eqref{eq:bootstrap-better-lambda}, \eqref{eq:bootstrap-better-a1p} or \eqref{eq:bootstrap-better-a2p}
  is violated somewhere on $[T_2, T_2 + \tau]$ for every $\tau > 0$. By continuity of the parameters with respect to time,
  this yields $p^1 \in \partial Q$.

  We set
  \begin{equation}
    \Phi: Q \to \partial Q,\qquad \Phi(p^0) := p^1.
  \end{equation}
  It is immediate from the definition that $\Phi(p) = p$ for $p \in \partial Q$, and it remains to show that $\Phi$ is continuous.
 
  Let $p^0 \in Q$, $\Phi(p^0) = p^1 \in \partial Q$ and $\varepsilon > 0$.
  Let $C$ be the constant from Lemma~\ref{lem:top-2} for $T_1 = T$ and $T_2 = T_0$.
  We will consider the case $p_0^1 = \frac 12$, the other cases being similar.
  It is clear that for $\delta > 0$ small enough $V_\delta := V_0^C\big(\frac 12 - \delta, p_1^1, p_2^1\big) \cap \partial Q$
  is an $\varepsilon$-neighborhood of $p^1$.
  Thus, by Lemma~\ref{lem:top-2}, in order to finish the proof it suffices to show that
  if $q^0 \in Q$ with $|q^0 - p^0|$ small enough, then the solution associated with $q$ passes through $V_\delta$.

  If $p^0 = p^1 \in \partial Q$, this is obvious, since $V_\delta$ is in this case a neighborhood of $p^0$.
  In the case $p^0 \in Q \setminus \partial Q$, the solution associated with $p^0$ passes through $V_\delta$
  before reaching $\partial Q$. Thus, by the continuous dependence on the initial data,
  the solution associated with $q^0$ passes through $V_\delta$ if $|q^0 - p^0|$ is small enough.
\end{proof}

\begin{proof}[Proof of Theorem~\ref{thm:deux-bulles}]
  Let $T_0 < 0$ be given by Proposition~\ref{prop:shooting} and let $T_0, T_1, T_2, \ldots$
  be a decreasing sequence tending to $-\infty$.
  For $n \geq 1$, let $u_n$ be the solution given by Proposition~\ref{prop:shooting}.
  Inequalities \eqref{eq:bootstrap-better-zeta}, \eqref{eq:bootstrap-better-mu}, \eqref{eq:bootstrap-better-theta},
  \eqref{eq:bootstrap-better-lambda} and \eqref{eq:bootstrap-better-g} yield
  \begin{equation}
    \label{eq:uniform}
    \Big\|u_n(t) - \Big({-}iW + W_{\frac{1}{\kappa}(\kappa|t|)^{-\frac{2}{N-6}}}\Big)\Big\|_\cE \lesssim |t|^{-\frac{1}{2(N-6)}},
  \end{equation}
  for all $t \in [T_n, T_0]$ and with a constant independent of $n$.
  Upon passing to a subsequence, we can assume that $u_n(T_0) \wto u_0 \in \cE$.
  Let $u$ be the solution of \eqref{eq:nls} with the initial condition $u(T_0) = u_0$.
Corollary~\ref{cor:weak-cont} implies that $u$ exists on the time interval $({-}\infty, T_0]$
and for all $t \in ({-}\infty, T_0]$ there holds $u_n(t) \wto u(t)$.
Passing to the weak limit in \eqref{eq:uniform} finishes the proof.
\end{proof}

\appendix

\section{Cauchy theory}
\label{sec:cauchy}

\subsection{Profile decomposition}
We recall briefly the profile decomposition method of Bahouri and G\'erard \cite{BaGe99}, and Merle and Vega \cite{MeVe98}.
In the case of the energy-critical defocusing NLS, the corresponding theory was developped by Keraani \cite{Ker01}.
For the focusing NLS in high dimensions, which is the case discussed in this paper, see Killip and Visan \cite{KiVi10}.

\begin{proposition}[Killip, Visan]
Let $u_{0,n}$ be a bounded sequence in $\cE$. There exists a subsequence of $u_{0,n}$, still denoted $u_{0,n}$,
such that there exist a family of solutions of the linear Schr\"odinger equation $U\lin^j(t) = \eee^{it\Delta}U_0^j$
and a family of sequences of parameters $t_n^j$ and $\lambda_n^j$ satisfying the pseudo-orthogonality condition
\begin{equation}
j \neq k \ \Rightarrow \ \lim_{n \to +\infty} \frac{\lambda_n^j}{\lambda_n^k} + \frac{\lambda_n^k}{\lambda_n^j} + \frac{|t_n^j - t_n^k|}{\lambda_n^j} = +\infty
\end{equation}
such that for all $J \geq 0$
\begin{equation}
\label{eq:lin-profils}
u_{0,n} = \sum_{j=1}^J U\lin^j\Big(\frac{-t_n^j}{\lambda_n^j}\Big)_{\lambda_n^j} + w_n^J,
\end{equation}
with
\begin{equation}
\lim_{J\to +\infty} \limsup_{n\to +\infty}\|\eee^{it\Delta}w_n^J\|_{L_{t, x}^\frac{2(N+2)}{N-2}} = 0.
\end{equation}
Moreover, for any $J \geq 0$ there holds
\begin{equation}
\lim_{n \to +\infty} \Big|\|u_{0,n}\|_\cE^2 - \sum_{j=1}^J \|U_0^j\|_\cE^2 - \|w_n^J\|_\cE^2\Big| = 0.
\end{equation} \qed
\end{proposition}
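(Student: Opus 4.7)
The plan is to follow the standard Bahouri--G\'erard concentration-compactness scheme, adapted to the energy-critical Schr\"odinger equation by Keraani and extended to high dimensions by Killip--Visan. The essential input is an \emph{inverse Strichartz inequality}: if a bounded sequence $v_{0,n} \in \cE$ satisfies $\|\eee^{it\Delta}v_{0,n}\|_{L^{2(N+2)/(N-2)}_{t,x}} \geq \varepsilon > 0$, then, after passing to a subsequence, there exist parameters $(t_n, \lambda_n)$ and a profile $U_0 \in \cE$ with $\|U_0\|_\cE \gtrsim \varepsilon^{\alpha}$ such that $(\lambda_n)^{(N-2)/(2N)} v_{0,n}(\lambda_n \cdot + y_n)$ (evaluated at the right time translation) converges weakly to $U_0$, and one has the energy Pythagorean split $\|v_{0,n}\|_\cE^2 = \|U_0\|_\cE^2 + \|v_{0,n} - U_{\lin}(-t_n/\lambda_n)_{\lambda_n}\|_\cE^2 + o(1)$. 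This inverse inequality is proved via a Littlewood--Paley/radial-cover decomposition of the Strichartz norm, applying refined Strichartz on a dyadic shell to extract a frequency-localized region where the mass is non-negligible, and then concentrating in space-time by the pigeonhole principle. In the radial setting one may take $y_n = 0$; otherwise one must carry spatial translations as well.

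First I would set up the iteration. Starting from $w_n^0 := u_{0,n}$, at step $j$ I apply the inverse Strichartz inequality to $w_n^{j-1}$: either $\limsup_n \|\eee^{it\Delta}w_n^{j-1}\|_{L^{2(N+2)/(N-2)}_{t,x}} \to 0$ (in which case we stop and the conclusion holds trivially for all $J \geq j-1$), or we extract a profile $U_0^j$ with parameters $(t_n^j, \lambda_n^j)$ and define $w_n^j := w_n^{j-1} - U\lin^j(-t_n^j/\lambda_n^j)_{\lambda_n^j}$. The Pythagorean identity at each extraction gives $\|w_n^j\|_\cE^2 \leq \|w_n^{j-1}\|_\cE^2 - c\|U_0^j\|_\cE^2 + o(1)$, so the total $\cE$-mass extracted is bounded by $\limsup_n \|u_{0,n}\|_\cE^2$. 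Consequently $\|U_0^j\|_\cE \to 0$ as $j \to +\infty$, and this in turn (via the inverse Strichartz inequality applied backwards) forces $\limsup_n \|\eee^{it\Delta}w_n^J\|_{L^{2(N+2)/(N-2)}_{t,x}} \to 0$ as $J \to +\infty$.

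The pseudo-orthogonality of the parameters $(t_n^j, \lambda_n^j)$ is established by contradiction: if it failed for two indices $j \neq k$, then up to passing to a subsequence the profiles could be regrouped, which, together with the weak-convergence characterization of $U_0^k$ in the frame $(t_n^k, \lambda_n^k)$, would force $U_0^k = 0$, contradicting the selection criterion. The asymptotic energy decoupling is obtained by expanding $\|w_n^J\|_\cE^2 = \|u_{0,n} - \sum_{j \leq J} U\lin^j(-t_n^j/\lambda_n^j)_{\lambda_n^j}\|_\cE^2$ and showing that every cross term $\la U\lin^j(-t_n^j/\lambda_n^j)_{\lambda_n^j}, U\lin^k(-t_n^k/\lambda_n^k)_{\lambda_n^k}\ra_\cE$ and every term $\la U\lin^j(-t_n^j/\lambda_n^j)_{\lambda_n^j}, w_n^J\ra_\cE$ tends to $0$; the former uses the pseudo-orthogonality directly, while the latter follows from the weak-convergence characterization of $U_0^j$ by a change of variables.

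The main obstacle is the inverse Strichartz inequality in high dimension $N \geq 6$, where the nonlinearity $f$ is no longer smooth and the Sobolev exponent $\tfrac{2(N+2)}{N-2}$ is below $2$; the refined Strichartz estimate must therefore be proved by interpolation between Bernstein inequalities and the standard Strichartz estimate rather than by a direct bilinear argument. Since this is carried out in detail in Killip--Visan, I would simply cite it. The radial restriction simplifies the bookkeeping since no spatial translation parameter $y_n^j$ appears.
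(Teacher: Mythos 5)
The paper does not prove this proposition at all: it is stated as a known result with a \qed and a citation to Killip--Visan \cite{KiVi10} (after Bahouri--G\'erard, Merle--Vega and Keraani), and your outline is precisely the standard iterative inverse-Strichartz extraction scheme by which that cited result is proved, with the key technical lemma again deferred to \cite{KiVi10} --- so in substance you take the same route as the paper. Two small slips in your commentary do not affect this: the exponent $\frac{2(N+2)}{N-2}$ is always larger than $2$, and the lack of smoothness of $f$ is irrelevant to the \emph{linear} profile decomposition (it only becomes an issue in the nonlinear profile approximation and stability theory, i.e.\ the second proposition of the appendix).
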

Formula \eqref{eq:lin-profils} is called the linear profile decomposition. In the applications, we regard $u_{0,n}$ as a sequence
of initial data of solutions $u_n$ of \eqref{eq:nls}. In order to approximate the solutions $u_n$, we introduce the nonlinear profiles.
The nonlinear profile $U^j$ corresponding to the linear profile $U\lin^j$ is defined as the solution of \eqref{eq:nls}
such that
\begin{equation}
\lim_{n\to+\infty}\Big\|U^j\Big(\frac{-t_n^j}{\lambda_n^j}\Big)- U\lin^j\Big(\frac{-t_n^j}{\lambda_n^j}\Big)\Big\|_\cE = 0.
\end{equation}

The next proposition is a version of the result of Keraani for the focusing NLS.
Its statement is very similar to Proposition~2.8 in \cite{DKM1}.
\begin{proposition}
\label{prop:profils}
Let $u_{0,n}$ be a sequence in $\cE$ with a linear profile decomposition \eqref{eq:lin-profils} and let $U^j:(T_-(U^j), T_+(U^j)) \to \cE$ be the nonlinear profiles.
Let $\tau_n > 0$ be a sequence such that for all $j$ and $n$
\begin{equation}
\frac{\tau_n - t_n^j}{(\lambda_n^j)^2} < T_+(U^j),\qquad \limsup_{n\to+\infty}\|U^j\|_{L^\frac{2(N+2)}{N-2}\big(\big[\frac{-t_n^j}{(\lambda_n^j)^2}, \frac{\tau_n - t_n^j}{(\lambda_n^j)^2}\big]\times \bR^N\big)} < +\infty.
\end{equation}
Let $u_n$ be the solution of \eqref{eq:nls} with the initial data $u_n(0) = u_{0,n}$. Then, for $n$ large, $u_n$ exists on the time interval $[0, \tau_n]$, $\limsup_{n\to+\infty}\|u_n\|_{L^\frac{2(N+2)}{N-2}([0, \tau_n]\times \bR^N)} < +\infty$ and for all $J \geq 0$
\begin{equation}
u_n(t) = \sum_{j=1}^J U^j\Big(\frac{t-t_n^j}{(\lambda_n^j)^2}\Big)_{\lambda_n^j} + w_n^J(t) + r_n^J(t),
\end{equation}
with
\begin{equation}
\lim_{J\to +\infty}\limsup_{n\to+\infty} \Big(\|r_n^J\|_{L^\frac{2(N+2)}{N-2}([0, \tau_n]\times \bR^N)} + \sup_{t\in[0, \tau_n]}\|r_n^J\|_\cE\Big) = 0.
\end{equation}
\end{proposition}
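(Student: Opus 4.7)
The plan is the standard nonlinear-profile-decomposition argument of Keraani, adapted to the high-dimensional focusing setting as in Killip--Visan. First I would define, for each $J$, the approximate solution
\begin{equation}
v_n^J(t) := \sum_{j=1}^J U^j\!\Big(\frac{t-t_n^j}{(\lambda_n^j)^2}\Big)_{\!\lambda_n^j} + \eee^{it\Delta}w_n^J,
\end{equation}
so that $v_n^J(0) = u_{0,n}$ up to terms depending on whether one uses $t_n^j$ or $0$ as the reference time (one handles this by replacing $U\lin^j(-t_n^j/\lambda_n^j)$ by $U^j(-t_n^j/\lambda_n^j)$, which by definition of the nonlinear profile differs by something which vanishes in $\cE$ as $n\to\infty$, and absorbing this difference into an $o_n(1)$ error).

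Second I would verify that $v_n^J$ is an approximate solution of \eqref{eq:nls}. Writing $i\partial_t v_n^J + \Delta v_n^J + f(v_n^J) = e_n^J$, the error decomposes as
\begin{equation}
e_n^J = f\Big(\sum_{j=1}^J U^j_{(n)} + \eee^{it\Delta}w_n^J\Big) - \sum_{j=1}^J f\big(U^j_{(n)}\big),
\end{equation}
where $U^j_{(n)}(t) := U^j((t-t_n^j)/(\lambda_n^j)^2)_{\lambda_n^j}$. The key input is that by the pseudo-orthogonality of the parameters $(t_n^j,\lambda_n^j)$, the cross terms $U^j_{(n)} U^k_{(n)}$ and their powers decouple in the relevant Strichartz norms as $n\to\infty$, and the remaining interaction with $\eee^{it\Delta}w_n^J$ is small because $\|\eee^{it\Delta}w_n^J\|_{L^{2(N+2)/(N-2)}_{t,x}}\to 0$ as $J\to\infty$. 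The main technical point is that in high dimensions the nonlinearity $f(u)=|u|^{4/(N-2)}u$ is only H\"older of exponent $4/(N-2)<1$, so the expansion of $f$ has to be handled via the fractional chain rule estimates used by Killip--Visan; with these, one establishes
\begin{equation}
\lim_{J\to\infty}\limsup_{n\to\infty}\Big(\|e_n^J\|_{N([0,\tau_n])} + \|v_n^J(0) - u_{0,n}\|_\cE\Big) = 0,
\end{equation}
where $N$ is the dual Strichartz space appropriate for the stability theory.

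Third, I would check the \emph{a priori} bound $\limsup_n \|v_n^J\|_{L^{2(N+2)/(N-2)}([0,\tau_n]\times\bR^N)} < +\infty$ uniformly in $J$. The quasi-orthogonality of parameters implies that
\begin{equation}
\|v_n^J\|_{L^{2(N+2)/(N-2)}}^{2(N+2)/(N-2)} \simeq \sum_{j=1}^J \|U^j\|_{L^{2(N+2)/(N-2)}([-t_n^j/(\lambda_n^j)^2,(\tau_n-t_n^j)/(\lambda_n^j)^2]\times\bR^N)}^{2(N+2)/(N-2)} + o_n(1),
\end{equation}
and by the hypothesis together with Pythagorean decoupling of the $\dot H^1$ norms, only finitely many profiles have Strichartz norm above any prescribed threshold, so the sum is bounded uniformly in $J$. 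Finally I would invoke the long-time perturbation lemma (in its high-dimensional form from Killip--Visan) applied to $v_n^J$ on $[0,\tau_n]$ with $u_n(0)=u_{0,n}$: this produces $u_n$ on $[0,\tau_n]$ with $r_n^J := u_n - v_n^J$ satisfying the required smallness both in $L^{2(N+2)/(N-2)}([0,\tau_n]\times\bR^N)$ and in $L^\infty_t \cE$.

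I expect the main obstacle to be the bookkeeping of the interaction terms in $e_n^J$: in high dimension one cannot differentiate $f$ freely, so one must argue via difference estimates and use the pseudo-orthogonality in an $L^p_t L^q_x$ Strichartz sense rather than through any pointwise Taylor expansion. Once this is in place, the perturbation lemma finishes the proof in a routine manner.
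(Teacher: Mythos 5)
Your proposal follows exactly the route the paper relies on: the paper's ``proof'' is only a citation to Duyckaerts--Kenig--Merle (Proposition~2.8) and Killip--Visan (Lemma~3.2), whose arguments are precisely the scheme you describe --- approximate solutions built from the nonlinear profiles plus the linear remainder, decoupling of interactions via pseudo-orthogonality of the parameters, and the high-dimensional long-time perturbation theory handling the merely H\"older-continuous derivative of the nonlinearity. So the proposal is correct and takes essentially the same approach as the paper.
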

\begin{proof}
See \cite{DKM1}, proof of Proposition~2.8, and \cite{KiVi10}, proof of Lemma 3.2.
\end{proof}
\subsection{Corollaries}
\begin{corollary}
  \label{cor:leaves-compact}
  There exists a constant $\eta > 0$ such that the following holds. Let $u: [t_0, T_+) \to \cE$
    be a maximal solution of \eqref{eq:nls} with $T_+ < +\infty$. Then for any compact set $K \subset \cE$
    there exists $\tau < T_+$ such that $\dist(u(t), K) > \eta$ for $t \in [\tau, T_+)$.
\end{corollary}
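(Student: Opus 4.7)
The plan is to argue by contradiction, using the compactness of $K$ to extract a strong limit and then applying a long-time perturbation argument (packaged inside Proposition~\ref{prop:profils}) to extend the solution past $T_+$, contradicting maximality. Specifically, suppose the claim fails: there exists a compact $K \subset \cE$ and a sequence $t_n \uparrow T_+$ together with $v_n \in K$ such that $\|u(t_n) - v_n\|_\cE \leq \eta$. By compactness of $K$, after extracting a subsequence we have $v_n \to v_\infty$ strongly in $\cE$ for some $v_\infty \in K$, so $\|u(t_n) - v_\infty\|_\cE \leq 2\eta$ for all $n$ large enough.

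Next, I would let $v(t)$ denote the maximal solution of \eqref{eq:nls} with $v(0) = v_\infty$. By the local Cauchy theory recalled in Section~\ref{ssec:setting}, there exists $\tau > 0$ such that $v$ exists on $[0,\tau]$ with finite scattering norm $\|v\|_{L^{2(N+2)/(N-2)}_{t,x}([0,\tau]\times\bR^N)} =: M < \infty$. The profile decomposition of the shifted initial data $u_{0,n} := u(t_n)$ has $v_\infty$ as one profile (at scale $\lambda_n \equiv 1$ and time shift $t_n^j \equiv 0$) plus remainders of $\cE$-norm $\lesssim \eta$; applying Proposition~\ref{prop:profils} with $\tau_n \equiv \tau$ yields, provided $\eta$ is smaller than a threshold determined by $M$, that the solutions $u_n$ of \eqref{eq:nls} with $u_n(0) = u(t_n)$ exist on $[0,\tau]$ with scattering norm bounded uniformly in $n$.

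Translating back in time, this means that $u$ itself extends to the interval $[t_n, t_n+\tau]$ for all $n$ large. But since $t_n \uparrow T_+$, eventually $t_n + \tau > T_+$, contradicting the assumed maximality of $T_+$. This completes the argument.

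The main obstacle is ensuring that the perturbation threshold $\eta$ is actually uniform (or at least large enough to be useful in the application to the bootstrap). This is handled by compactness of $K$: covering $K$ by finitely many neighborhoods on which the Cauchy map has a common existence time and a common bound on the scattering norm of the resulting solution, one obtains a single $\eta > 0$ that works for every possible limit $v_\infty \in K$. An additional subtlety is that the constructed sequence $u_n$ in Proposition~\ref{prop:profils} is an \emph{a priori} construction via the nonlinear profiles; the identification $u_n(\cdot) = u(t_n + \cdot)$ on the common interval of existence follows from the uniqueness part of the local well-posedness theory.
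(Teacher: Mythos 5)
The paper does not prove this corollary directly; it cites \cite{moi16p}, Corollary A.4, where the argument is of the same general type as yours (contradiction, compactness of $K$, profile decomposition / perturbation theory). So the overall strategy is reasonable, but as written your proof has a genuine gap: it proves the statement with the quantifiers in the wrong order. In the corollary, $\eta$ is a \emph{universal} constant, fixed before $K$ and before the solution $u$; your $\eta$ is produced by covering the given compact set $K$ and taking a perturbation threshold ``determined by $M$'', where $M$ is a bound on the scattering norm of the reference solutions with data in $K$. That yields only: for every compact $K$ there exists $\eta(K)>0$ such that\dots, which is weaker than the claim and loses exactly the feature that makes the constant usable together with Corollary~\ref{cor:weak-cont} (where the same universal $\eta$ appears) when the compact set is allowed to vary.

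The second, related problem is where the smallness is invoked. Proposition~\ref{prop:profils} has no hypothesis of the form ``the data is $\varepsilon(M)$-close to a profile''; what it requires is that \emph{every} nonlinear profile of the sequence $u(t_n)$ exist up to the rescaled times with finite scattering norm. The difference $u(t_n)-v_\infty$ is small in $\cE$ but is not the remainder $w_n^J$ of the decomposition (the remainder must have asymptotically vanishing $L^{\frac{2(N+2)}{N-2}}_{t,x}$ norm under the linear flow); generically it carries its own nontrivial profiles of $\cE$-size $\lesssim\eta$, and these must be shown to generate \emph{global} nonlinear profiles with finite scattering norm. This is done by small-data theory, and it is precisely this step that forces $\eta$ to be below the universal small-data threshold (depending only on $N$) — which is how the universal constant in the statement arises: the profile containing $v_\infty$ (with trivial parameters) needs only local existence on some $[0,\tau]$, $\tau>0$ arbitrary, while all other profiles are small and hence global. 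If you replace your covering-plus-stability step by this small-data argument for the non-principal profiles, and then conclude via Proposition~\ref{prop:profils}, uniqueness, and $t_n+\tau>T_+$ exactly as you do, the proof is complete and gives the constant with the correct uniformity.
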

\begin{proof}
See \cite{moi16p}, Corollary A.4.
\end{proof}
\begin{corollary}
  \label{cor:weak-cont}
  There exists a constant $\eta > 0$ such that the following holds.
  Let $K \subset \cE$ be a compact set and let $u_n: [T_1, T_2] \to \cE$ be a sequence of solutions of \eqref{eq:nls} such that
  \begin{equation}
    \dist(u_n(t), K) \leq \eta,\qquad \text{for all }n \in \bN\text{ and }t \in [T_1, T_2].
  \end{equation}
  Suppose that $u_n(T_1) \wto u_0 \in \cE$. Then the solution $u(t)$ of \eqref{eq:nls} with the initial condition $u(T_1) = u_0$
  is defined for $t \in [T_1, T_2]$ and
  \begin{equation}
    u_n(t) \wto u(t),\qquad \text{for all }t \in [T_1, T_2].
  \end{equation}
\end{corollary}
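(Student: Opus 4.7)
My plan is to run a continuation-in-time argument combining the profile decomposition of Proposition~\ref{prop:profils} with the blow-up characterization of Corollary~\ref{cor:leaves-compact}, in the spirit of Corollary~A.5 of \cite{moi16p}. Let $\eta_0>0$ denote the absolute constant produced by Corollary~\ref{cor:leaves-compact}, and take $\eta$ in the present statement to satisfy $\eta<\eta_0$. After passing to a subsequence I would arrange a linear profile decomposition of $u_n(T_1)$; since $u_n(T_1)\wto u_0$, I normalize the first profile to have scale $\lambda_n^1\equiv 1$, time shift $t_n^1\equiv 0$, and $U_0^1=u_0$. Uniqueness of the weak limit then forces each remaining linear profile, and hence each associated nonlinear profile $U^j((t-t_n^j)/(\lambda_n^j)^2)_{\lambda_n^j}$ for $j\geq 2$, to converge weakly to $0$ at every fixed time.

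Let $u$ be the maximal solution with $u(T_1)=u_0$ and set
\[ T^{*}:=\sup\bigl\{\tau\in[T_1,T_2]:\, u \text{ exists on } [T_1,\tau] \text{ and } u_n(t)\wto u(t)\text{ for all }t\in[T_1,\tau]\bigr\}. \]
I will show $T^{*}=T_2$ by contradiction. If $T^{*}=T_+(u)\leq T_2$, Corollary~\ref{cor:leaves-compact} furnishes $\tau<T^{*}$ with $\dist(u(\tau),K)>\eta_0$; but $u_n(\tau)\wto u(\tau)$, $\dist(u_n(\tau),K)\leq\eta$, and the set $\{v:\dist(v,K)\leq\eta\}=K+\overline{B_\eta(0)}$ is weakly closed (sum of a compact set and a weakly closed convex set), yielding $\dist(u(\tau),K)\leq\eta<\eta_0$, a contradiction. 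If instead $T^{*}<T_+(u)$, I extract a further subsequence with $u_n(T^{*})\wto v$, invoke Proposition~\ref{prop:profils} on $[T_1,T^{*}]$ to write
\[ u_n(T^{*})=u(T^{*})+\sum_{j\geq 2}U^j\!\left(\tfrac{T^{*}-t_n^j}{(\lambda_n^j)^2}\right)_{\!\lambda_n^j}+w_n^J(T^{*})+r_n^J(T^{*}), \]
and pass to the weak limit at $t=T^{*}$: each term for $j\geq 2$ vanishes weakly by construction, $w_n^J(T^{*})=\eee^{i(T^{*}-T_1)\Delta}w_n^J(T_1)\wto 0$ since the linear flow is unitary and $w_n^J(T_1)\wto 0$, and $r_n^J$ tends to $0$ in $\cE$ uniformly in $t$ after sending $J\to\infty$. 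Hence $v=u(T^{*})$, and continuous dependence at $T^{*}$ extends $u_n\wto u$ past $T^{*}$, contradicting the definition of $T^{*}$.

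The main obstacle is the legitimacy of invoking Proposition~\ref{prop:profils} on $[T_1,T^{*}]$: this requires uniform-in-$n$ Strichartz control for every nonlinear profile on its rescaled time interval. This is where the hypothesis $\dist(u_n(t),K)\leq\eta$ is used a second, essential time. Combined with compactness of $K$ it delivers a uniform $\cE$-bound on $u_n$ throughout $[T_1,T_2]$, and a standard nonlinear-profile induction (as in \cite{KiVi10}) leverages this to control each profile's Strichartz norm on the relevant interval; without such uniform boundedness a secondary profile could develop a singularity before time $T^{*}$ and invalidate the approximation.
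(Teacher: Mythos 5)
Your overall scaffolding (profile decomposition at the initial time, identification of the weak limit with the trivial-parameter profile, passing to the weak limit in the expansion of Proposition~\ref{prop:profils}, and ruling out $T_+(u)\leq T_2$ via Corollary~\ref{cor:leaves-compact} together with weak lower semicontinuity of $\dist(\cdot,K)$) is the right frame, and matches the argument the paper refers to in \cite{moi16p}. But the step you yourself flag as the main obstacle is exactly where the proof is missing, and your proposed fix does not work. A uniform $\dot H^1$ bound on $u_n$ gives no control whatsoever on the Strichartz norms of the nonlinear profiles for the \emph{focusing} energy-critical equation: bounded-energy-norm data can blow up in finite time (type II concentration is precisely the phenomenon this paper constructs), and the ``induction'' in \cite{KiVi10} is an induction on energy below the ground-state threshold, which is not available here. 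The actual mechanism, and the only place where the smallness of $\eta$ is really used, is at the initial time: since $\dist(u_n(T_1),K)\leq\eta$ and $K$ is compact, write $u_n(T_1)=v_n+h_n$ with $v_n\in K$, $\|h_n\|_\cE\leq\eta$, and (after extraction) $v_n\to v$ strongly in $\cE$. Then every profile of $u_n(T_1)$ with non-trivial parameters is a profile of $h_n$, hence by the Pythagorean expansion has $\cE$-norm at most $\eta+o(1)$; choosing $\eta$ below the small-data global well-posedness threshold, all nonlinear profiles $U^j$, $j\geq 2$, are global with uniformly small scattering norms, while $U^1=u$ has finite Strichartz norm on any compact subinterval of $[T_1,T_+(u))$. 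This is what legitimizes Proposition~\ref{prop:profils} on $[T_1,\tau]$ for every $\tau<\min(T_2,T_+(u))$; without it your argument has a genuine gap. (You also never justify that the remaining profiles are small; ``uniqueness of the weak limit'' only gives that they tend weakly to $0$, not that they cannot blow up.)

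A secondary flaw: in the case $T^*<T_+(u)$ you conclude by saying that ``continuous dependence at $T^*$ extends $u_n\wto u$ past $T^*$.'' Continuous dependence is with respect to strong convergence of the data; propagating \emph{weak} convergence forward in time is precisely the content of the corollary being proved, so as written this step is circular. Once the hypotheses of Proposition~\ref{prop:profils} are verified as above, this case is vacuous: weak convergence holds on $[T_1,\tau]$ for every $\tau<\min(T_2,T_+(u))$ directly, so no continuation-in-time argument (and no re-decomposition at $T^*$) is needed; the only remaining point is to exclude $T_+(u)\leq T_2$, which your first case handles correctly.
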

\begin{proof}
See \cite{moi16p}, Corollary A.6.
\end{proof}

\bibliographystyle{plain}
\bibliography{nls-deux-bulles}

\end{document}